\newtheorem{theorem}{Theorem}[section]
\newtheorem{lemma}[theorem]{Lemma}
\newtheorem{definition}[theorem]{Definition}
\newtheorem{example}[theorem]{Example}
\newtheorem{conjecture}[theorem]{Conjecture}
\newtheorem{remark}[theorem]{Remark}
\newtheorem{corollary}[theorem]{Corollary}
\numberwithin{equation}{section}
\def\Ms{\mathscr{M m 1234}}
\def\Cs{\mathscr{C c 1234}}
\def\Ss{\mathscr{S s 1234}}
\def\Qs{\mathscr{Q q 1234}}
\def\Cal{\mathcal}
\def\R{{\Cal R}}
\def\C{{\Cal C}}
\def\P{{\Cal P}}
\def\S{{\Cal S}}
\def\F{{\Cal F}}
\def\I{{\Cal I}}
\def\d{\partial}
\def\tr{{\hbox{\rm tr}}}
\def\Ma{\frM_{n,m}}
\def\Mlm{\frM_{\ell,m}}
\def\gnk{G_{n,k}}
\def\f0{f_0}
\def\Fc0{\varphi_0}
\def\I_k {I_{-}^{k/2}}
\def\I+k {I_{+}^{k/2}}
\def\vnk{V_{n,k}}
\def\vnm{V_{n,m}}
\def\Gr{\frT}
\def\cd{\stackrel{*}{\C}\!{}_{m, k}^\a}
\def\sd{\stackrel{*}{\S}\!{}_{m, k}^\a}
\def\cd0{\stackrel{*}{\C}\!{}_{m, k}^\a}
\def\sd0{\stackrel{*}{\S}\!{}_{m, k}^\a}
\def\fd{\stackrel{*}{F}\!{}_{\!m, k}}
\def\rdk{\stackrel{*}{R}\!{}_{m, n-k}}
\def\ncd0{\stackrel{*}{\Cs}\!{}_{m, k}^\a}
\def\ncs{\stackrel{*}{\Ss}\!{}_{m, k}^\a}
\def\ncb{\stackrel{*}{\Ss}\!{}_{m, k}^\b}
\def\nck{\stackrel{*}{\S}\!{}_{m, k}^{\b+n-k-m}}
\def\bbr{{\Bbb R}}
\def\bbn{{\Bbb N}}
\def\bbc{{\Bbb C}}
\def\rank{{\hbox{\rm rank}}}
\def\tr{{\hbox{\rm tr}}}
\def\cos{{\hbox{\rm cos}}}
\def\det{{\hbox{\rm det}}}
\def\min{{\hbox{\rm min}}}
\def\Pr{{\hbox{\rm Pr}}}
\def\gnk{G_{n,k}}
\def\gnm{G_{n,m}}
\def\part{\partial}
\def\intl{\int\limits}
\def\b{\beta}
\def\Gam{\Gamma}
\def\Om{\Omega}
\def\a{\alpha}
\def\om{\omega}
\def\Del{\Delta}
\def\del{\delta}
\def\vp{\varphi}
\def\gam{\gamma}
\def\Lam{\Lambda}
\def\sig{\sigma}
\def\lam{\lambda}
\def\z{\zeta}
\def\t{\tau}
\def \nv{{\bf n}_0}
\def \kv{{\bf k}_0}
\def\lv{{\boldsymbol \lam}}
\def\mv{{\boldsymbol \mu}}
\def\av{{\boldsymbol \a}}
\font\frak=eufm10
\def\fr#1{\hbox{\frak #1}}
\def\frC{\fr{C}}        
\def\frD{\fr{D}}
\def\frL{\fr{L}}        
\def\frM{\fr{M}}
\def\frT{\fr{T}}
\def\pl{\P_\ell}
\def\vmk{V_{m,k}}
\def\cos{{\hbox{\rm cos}}}
\def\det{{\hbox{\rm det}}}
\def\min{{\hbox{\rm min}}}
\def\p{\P_m}
\def\gm{\Gamma_m}
\def\tr{{\hbox{\rm tr}}}
\def\part{\partial}
\def\intl{\int\limits}
\def\b{\beta}
\def\Gam{\Gamma}
\def\Om{\Omega}
\def\a{\alpha}
\def\sideremark#1{\ifvmode\leavevmode\fi\vadjust{\vbox to0pt{\vss
 \hbox to 0pt{\hskip\hsize\hskip1em
\vbox{\hsize2cm\tiny\raggedright\pretolerance10000
 \noindent #1\hfill}\hss}\vbox to8pt{\vfil}\vss}}}%
\newcommand{\edz}[1]{\sideremark{#1}}
\newcommand{\be}{\begin{equation}}
\newcommand{\ee}{\end{equation}}
\newcommand{\bea}{\begin{eqnarray}}
\newcommand{\eea}{\end{eqnarray}}
\newcommand{\Bea}{\begin{eqnarray*}}
\newcommand{\Eea}{\end{eqnarray*}}
\begin{document}

\title[  ]
{Funk, Cosine, and Sine Transforms on Stiefel and Grassmann manifolds}

\author{  B. Rubin }
\address{Department of Mathematics, Louisiana State University, Baton Rouge,
Louisiana 70803, USA}
\email{borisr@math.lsu.edu}

\thanks{ The work was
 supported  by the NSF grants PFUND-137 (Louisiana Board of Regents),
 DMS-0556157, and the Hebrew University of Jerusalem.}

\subjclass[2000]{Primary 44A12; Secondary 47G10}



\keywords{The Funk transform, the Radon  transform, Fourier analysis, the cosine transform, the sine transform.}
\begin{abstract}
The Funk, cosine, and sine transforms on the unit sphere are indispensable tools in integral geometry. They are also known to be interesting objects in harmonic analysis.
The aim of the  paper is to extend basic facts about these transforms   to the  more general context for Stiefel or Grassmann manifolds. The main topics are composition formulas, the Fourier functional relations for the corresponding homogeneous distributions, analytic continuation, and explicit inversion formulas.

 \end{abstract}

\maketitle

\centerline {CONTENTS}

1. Introduction.

2. Preliminaries.

3. The higher-rank Funk transform.

4. Cosine  and sine transforms. Composition formulas.

5. Cosine  transforms via the Fourier analysis.

6. Normalized cosine and sine transforms.

7. The method of Riesz potentials.

8. Appendix.

\section{Introduction}
\setcounter{equation}{0}

 \subsection{History and motivation} Our consideration has several sources.

 {\bf 1.} For a function $\Phi$ on the unit sphere $S^2$ in $\bbr^3$ P. Funk \cite{F11, F13} defined a {\it circle-integral function } ({\it die Kreisintegral-Funktion}) $\chi$  on the set of great circles as an integral of $\Phi$ over the corresponding great circle.  He suggested two inversion algorithms; see
 \cite[pp. 285-288]{F13}. The first one relies on expansion in  spherical harmonics  and the second  reduces the problem to Abel's integral equation. From these results Funk
  derived the celebrated Minkowski's theorem \cite{Min}, \cite [p. 137]{Hel}, stating  that bodies of constant circumference are bodies of constant width.

  {\bf 2.}    Funk's Kreisintegral-Funktion  is now called {\it the Funk transform}. This concept extends to higher dimensions, when great circles on  $S^2$ are substituted by cross-sections $S^{n-1}\cap \xi$,  $\xi$ being a   $k$-dimensional linear subspace of $\bbr^n$, $1\le k\le n-1$. The set of all such subspaces forms a Grassmann manifold $G_{n,k}=O(n)/ (O(k)\times O(n-k))$.   These transformations were extensively studied in Gelfand's school,
  by S. Helgason, and other authors;
  see \cite {GGG, Hel90, Hel00, Hel, Ru2, Ru4, Ru03} and references therein. Further generalization  connects functions on two different Grassmannians by inclusion, namely,
  \be \label {pays}(R_{k,\ell} f) (\eta)= \int_{\xi\subset \eta} f(\xi)\, d_\eta \xi, \qquad \xi\in G_{n,k}, \quad \eta\in G_{n,\ell},\quad k<\ell,\ee
  where $d_\eta \xi$ denotes the probability measure on the Grassmann manifold $G_k (\eta)$ of all $k$-dimensional linear subspaces of $\eta$.
   Operators (\ref{pays}) are also known as Radon transforms for a pair of Grassmann manifolds. They    were studied
    by Petrov \cite{P1}, Gelfand and his collaborators \cite{GGS, GGR}, Grinberg \cite{Gri}, Kakehi \cite{Ka},  Grinberg and Rubin \cite{GR},  Zhang \cite{Zh1, Zh2}.  Affine versions of (\ref{pays}) were considered in \cite {GK1,GK2,  Ru04, Shi}.
    
    In spite of the elegance and ingenuity of the inversion methods  in these  publications, the resulting formulas are pretty involved.  It is a challenging problem to find new simple inversion formulas and develop a theory which is parallel to that for  the unit sphere.  The present paper is devoted to this problem. The main idea of our approach is to treat operators (\ref{pays}) as members of  the analytic family of the higher-rank cosine transforms, which will be  introduced below.

   {\bf 3.}     The name {\it cosine transform}  was  given by  Erwin Lutwak
   \cite[p. 385]{Lu} in 1990 to the integral operator
\be\label{t11}(\frC
f)(u)=\int_{S^{n-1}} f(v) |u \cdot v| \, dv, \qquad u \in
S^{n-1}.\ee
Since then, this term is widely used in  integral and convex geometry (in parallel with its traditional meaning in the Fourier analysis).
Operator (\ref{t11}) and its generalization with the  kernel $ |u \cdot v|^\lam$ were studied without naming by many authors  in geometry and analysis since
Blaschke \cite{Bla},   Levy \cite{Lev}, Aleksandrov \cite{Al}; see \cite{Ga, K97, Ru03}
for references.
A remarkable fact, that
amounts to the results of Gelfand-Shapiro \cite{GSha} and Semyanistyi
\cite{Se}, is that cosine transforms   are
restrictions to $S^{n-1}$ of the Fourier transforms of  homogeneous
 distributions on $\bbr^n$.  Specifically, if we set \[ (\frC ^{\a} f)(u)= \int_{S^{n-1}} f(v) |u
\cdot v| ^{\a -1} \, dv,\quad (\F\phi)(y)=\int_{\bbr^n} \phi (x) e^{ix\cdot y}\, dx,\]   $f\in L^1(S^{n-1})$,  $\phi\in S(\bbr^{n})$, then
 \bea\label {hrya}&&\frac{1}{\Gam (\a/2)}\int_{\bbr^n}  \frac{(\frC ^{\a}f)(y/|y|)}{|y|^{1-\a}}\, \overline{(\F\phi)(y)}\, dy\\
  &&\quad =\frac{c}{\Gam ((1\!-\!\a)/2)}\int_{\bbr^n}  \frac{f(x/|x|)}{|x|^{n+\a-1}}\, \overline{\phi(x)}\, dx, \qquad c=2^{n+\a}\pi^{n-1/2}.\nonumber\eea
Integrals in this equality converge absolutely when $0<Re \, \a <1$ and extend by analyticity to all $\a\in \bbc$.
 An important  observation due to Semyanistyi is that the Funk transform and its inverse are members of the  analytic family
  of suitably normalized cosine transforms $\frC ^{\a}$. This result was extended in \cite {Ru4} to the  Funk-Radon transforms over totally geodesic submanifolds of arbitrary dimension. The corresponding  cosine transforms have found  application in convex geometry \cite {Ru7, Ru10a, RZ}. Some ideas from \cite{Se} were rediscovered by Koldobsky, who found remarkable application of the relevant Fourier transform technique to geometric problems; see \cite {K05} and references therein.

 {\bf 4.}  In the last two decades a considerable attention  was attracted to
generalization of the cosine transform for functions on the Stiefel and Grassmann manifolds. The impetus
 was given in stochastic geometry  by Matheron \cite[p. 189]{Mat},
 who conjectured  that the higher-rank analogue of (\ref{t11}) (the precise definition is given later)
  is injective. Matheron's conjecture was disproved  by Goodey and Howard
\cite{GH1}, using topological results of Gluck and  Warner \cite{GW}. A self-contained Fourier analytic proof,  versus \cite{GH1}, was suggested by   Ournycheva  and  Rubin \cite {OR4, OR3}.
Goodey and Zhang \cite{GZ} applied higher-rank cosine transforms to the study of lower dimensional projections of convex bodies;  see also \cite{Goo, Spo}.
Interesting connections to  group representations can be found in the papers by  Alesker and Bernstein
\cite{AB}, Alesker \cite{A}, and Zhang \cite{Zh2}. One should also mention  fundamental publications   by Blind, Herz, Faraut, Khekalo, Ra\"{\i}s, Petrov, Ricci and Stein, Stein, and others. They are devoted to analysis of homogeneous distributions on matrix spaces; see  \cite {Bli1, Bli2, FK, Herz, Kh1a, Kh2,  OR4, OR3, P2, Rai, RS, St2} and references therein.

 \subsection{Plan of the paper and main results} Below we give  a brief account of main results and  ideas of the paper.  Further details and more results can be found in respective sections.
 Section 2 contains  preliminaries. We recall  basic facts about matrix spaces,
  Radon transforms over matrix planes, Riesz distributions, and  the composite power function associated to the cone $\Omega$ of positive definite symmetric $m\times m$ matrices.

In Section 3 we define the higher-rank Funk transform $F_{m,k}$ as an operator (\ref{la3v})
that sends functions on the Stiefel manifold $\vnm$ of orthonormal $m$-frames in $\bbr^n$ to functions on  $\vnk$, where $m$ and $k$ may be different.  We also define the dual transform $\fd$ and establish connection between our transforms and Radon transforms  (\ref{pays}).
\begin {theorem} \label{j7652} Let $1\le m \le k\le n-m$.
The mapping $f \to F_{m,k} f$ has a kernel $$\ker F_{m,k}=\{f \in L^1(\vnm): \; \int_{O(m)} f(v\gam)\,d\gam=0 \;  a.e.\}.$$
\end{theorem}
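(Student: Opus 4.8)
The plan is to identify $\ker F_{m,k}$ by reducing the problem to the injectivity of a Radon transform of type (\ref{pays}) on Grassmann manifolds, exploiting the fact that $F_{m,k}$ depends on $f$ only through its average over the right action of $O(m)$.

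First I would record the relevant averaging operator. Put $(Af)(v)=\int_{O(m)}f(v\gam)\,d\gam$. Since the measure on $\vnm$ is invariant under the right action $v\mapsto v\gam$, $A$ is a bounded idempotent (hence a projection) on $L^1(\vnm)$, and its range is exactly the space of functions invariant under this $O(m)$-action, i.e.\ the lifts to $\vnm$ of functions on the Grassmannian $\gnm$. By its very definition, $\ker A=\{f\in L^1(\vnm):\ \int_{O(m)}f(v\gam)\,d\gam=0\ \text{a.e.}\}$ is precisely the right-hand side of the asserted identity, so the theorem is equivalent to the operator equality $\ker F_{m,k}=\ker A$.

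The crucial step is the intertwining relation $F_{m,k}=F_{m,k}\circ A$. In the defining integral (\ref{la3v}) the frame $v$ enters only through $\span(v)$, so both the incidence condition and the measure of integration are unchanged under $v\mapsto v\gam$. Consequently,
\be\label{pf-intertw}
(F_{m,k}Af)(\xi)=\int_{O(m)}(F_{m,k}f)(\xi)\,d\gam=(F_{m,k}f)(\xi),
\ee
where the first equality uses Fubini's theorem together with the right $O(m)$-invariance of the measure in (\ref{la3v}) (the substitution $v\mapsto v\gam$), and the second uses $\int_{O(m)}d\gam=1$. The easy inclusion follows immediately: if $Af=0$ a.e.\ then $F_{m,k}f=F_{m,k}Af=0$, so $\ker A\subseteq\ker F_{m,k}$. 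For the reverse inclusion, suppose $F_{m,k}f=0$. Then (\ref{pf-intertw}) gives $F_{m,k}(Af)=0$, and $Af$ is now $O(m)$-invariant. Under the identification of $O(m)$-invariant functions with functions on $\gnm$, the restriction of $F_{m,k}$ to this subspace is, up to that identification, the Grassmannian Radon transform (\ref{pays}) from $\gnm$ to $\gnk$ (the connection established in this section); its injectivity forces $Af=0$, that is $f\in\ker A$.

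The main obstacle is exactly this injectivity, and it is the only place where the hypothesis $k\le n-m$ is genuinely used: one must show that an $L^1$-function on $\gnm$ whose integrals over all $m$-planes contained in a given $k$-plane vanish must itself be zero. I would deduce it from the known injectivity of Radon transforms on pairs of Grassmann manifolds in the dimension range $m+k\le n$ (equivalently $k\le n-m$), invoking \cite{GR, Gri, P1} and the composition and duality relations developed in this section; when $m=k$ the statement is trivial, since there $F_{m,k}$ reduces to $A$ itself. Combining the two inclusions then yields $\ker F_{m,k}=\ker A$, which is the assertion of the theorem.
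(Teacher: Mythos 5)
Your reduction via the averaging projection $A$ is precisely the paper's own first step: Remark \ref{78hy} records that $F_{m,k}f=F_{m,k}(Af)$, whence $\ker A\subseteq\ker F_{m,k}$, and the theorem amounts to injectivity of $F_{m,k}$ on right $O(m)$-invariant integrable functions. So the skeleton of your argument agrees with the paper's. The difficulty is concentrated in the step you yourself flag as the main obstacle, and there it does not close. A side error first: the case $k=m$ is not trivial, and $F_{m,m}$ does not reduce to $A$ --- it is the genuine Funk transform over frames orthogonal to $u$ (for $m=1$ it is the classical Funk transform on the sphere, whose injectivity on even functions is already a theorem of Funk, not a tautology). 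More seriously, the injectivity you propose to import from \cite{GR, Gri, P1} is established in those references for smooth (or at least continuous) functions, typically via spectral decomposition on highest weight vectors or via inversion formulas requiring differentiability, whereas Theorem \ref{j7652} is an $L^1$ statement; the paper stresses that its point is to generalize the known $C^\infty$, $m=1$ result of \cite{Hel}. Injectivity on a dense smooth subspace does not transfer to $L^1$ automatically: one would need, say, density of the range of the dual transform $\fd$ in the invariant part of $C(\vnm)$, which is essentially equivalent to what is to be proved.

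The paper closes this gap with machinery your proposal does not supply or replace. The Fourier functional equations of Section 5 yield the distributional inversion formula of Theorem \ref{cr24},
\[
\underset{\a=m+k-n}{a.c.}\,\big(\ncd0 F_{m,k}f,\,\om\big)=\varkappa_k\,(f,\om),
\qquad \om\in C^\infty(\vnm),
\]
valid for every integrable right $O(m)$-invariant $f$. Applied to $Af$, it gives $(Af,\om)=0$ for all test functions $\om$ whenever $F_{m,k}f=0$, hence $Af=0$ a.e.; this weak-type inversion at the $L^1$ level is the actual content of the theorem. As written, your argument assumes that content rather than proving it.
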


This theorem generalizes the well-known result in \cite{Hel}, where the case $m=1$ is considered for $C^\infty$ functions.

In Section 4 we introduce  analytic families of non-normalized  cosine  and sine  transforms for a pair of Stiefel manifolds; see
(\ref{0mby})-(\ref{c0mbyd}). Necessary and sufficient conditions are obtained for these transforms  to be represented by absolutely convergent integrals. Useful composition formulas, which generalize to  $m>1$ the corresponding equalities for $m=1$ in \cite{Ru4, Ru7},
are derived.  One of them is
\be \label {kliu}\fd F_{m, k}f\!=\!
\tilde c\,Q^{n-k-m} f,
\ee
where $Q^{\a} f$, $\a=n-k-m$, is the sine transform of $f$ and a constant $\tilde c$ is explicitly evaluated. This formula  is well-known for Radon transforms of different kinds, where the operator on the right-hand side stands for the relevant version of the Riesz potential;
see, e.g.,  \cite{Rad, Fug, Hel, Ru10}. In the higher-rank case an analogue of the composition $\fd F_{m, k}$ played a crucial role in \cite {Gri}, however, it was not explicitly computed  and  presented only in the spectral form on highest weight vectors.

Section 5 is devoted to the Fourier analysis and analytic continuation of the cosine transform. Given $\lv=(\lam_1,\ldots, \lam_m)\in\bbc^m$, we introduce {\it the
 composite cosine transform}
\be\label{tnfin}(T_{k,m}^{\lv} \vp)(v)=\int_{\vnk} \vp(u) \, (v'uu'v)^{\lv} \,
d_*u, \qquad v\in\vnm,\ee
 where $(\cdot)^{\lv}$ denotes the
 power function of the cone $\Omega$,   ``$\,{}'\,$'' stands for the transposed matrix, and integration is performed against the invariant probability measure on $\vnk$.  Injectivity of such operators in the case $k=m$ was studied in \cite{ OR4, OR3}.
  For (\ref{tnfin}) and for the dual Funk transform $\fd$ we obtain an analogue of the Fourier functional equation (\ref{hrya}); see Theorems \ref{gen} and \ref{nkild}.
    An important new feature is that a function $f$ on the right-hand side must be replaced by a certain {\it complementary Radon transform} of $f$. The latter boils down to the identity operator, when $k=m$.
The Fourier functional equation for the operator (\ref{tnfin}) provides complete information about  meromorphic structure of  distributions of the form
\be\label {ui91}  \a \to (T^\a_{m,k} f, \om), \qquad \om \in C^\infty (\vnk),\quad f\in L^1(\vnm), \ee
where $T^\a_{m,k}$ stands for the cosine or sine transform under consideration.
    We conjecture that if $f\in C^\infty (\vnm)$ then the polar set of the distribution (\ref{ui91}) coincides with that of its pointwise counterpart
    $$\a \to (T^\a_{m,k} f)(u) \quad \mbox{\rm for each } \quad  u\in\vmk.$$ To the best of our knowledge, a  proof of this fact represents an open problem.

In Section 6 we define  normalized  versions of the cosine and sine transforms. One of the main results of this section is Theorem \ref{cr24}, according to which an
integrable   right
$O(m)$-invariant function $f$ on $\vnm$, can be reconstructed in the sense of distributions from its Funk transform $\vp=F_{m,k} f$  by the formula
\be \label {forq1in}  \underset
{\a=m +k-n}{a.c.} (\ncd0 \vp, \om)\!=\!\varkappa_k (f,\om), \qquad \om \in C^\infty
(\vnm),\ee
where
$$(\ncd0 \vp)(v)=\del_{n,m,k} (\a) \int_{\vnk} \!\!\vp(u)\, (\det(v'uu'v))^{(\a-k)/2} \, d_*u, \quad v\in\vnm,$$
is  the normalized  dual cosine transform and the constant $\varkappa_k$ is explicitly evaluated.  Thus, an inverse Funk transform is actually
 a member of the analytic family of suitably normalized dual cosine transforms,
 and all possible inversion formulas for the Radon transform  (\ref{pays}) on Grassmannians can be regarded as different realizations of the analytic continuation in (\ref{forq1in})\footnote
 {This statement does not work for the restricted Radon transform  as in \cite{GGS, GGR}, because the latter is, in fact, another operator.}.  Since $(f,\om)=0 \;\forall\om \in C^\infty
(\vnm)$ implies  $f=0$ a.e., the validity of Theorem \ref{j7652} follows; see also Remark \ref{78hy}.

Similar inversion results are obtained for the cosine transforms. An interesting observation is that, {\it if $k=m$, then the Funk transform, the cosine transforms, and their inverses are (up to normalization)  members of the same analytic family of operators.} This fact was known before only for $m=1$ and established using  decomposition in spherical harmonics.

In Section 7 we suggest new realization of the inverse sine and Funk transforms in terms of powers of the Cayley-Laplace  operator $\Del=\det (\d/\d x_{i,j})$ in the space $\Ma$  of $n\times m$ real matrices.  These powers are associated with the Riesz potential. The resulting formulas differ in principle from those in \cite{GGR, Gri, GR, Ka, P1}. They look much simpler, however, must be interpreted in the sense of distributions. For instance, if we write $x\in \Ma$ in polar coordinates
 $x=vr^{1/2}$, $r\in \Omega$, $v \in \vnm$, and  denote
$(E_\lam f)(x)=(\det(r))^{\lam/2} f(v)$, then, in the case of $n-k-m$  even, we  have
\[f(v)=c_{m,k}\,(\Del^{\ell} \,E_{-k}\fd \vp)(v), \qquad \vp=F_{m,k}f, \quad \ell=(n-k-m)/2,\]
where the constant $c_{m,k}$ is explicitly evaluated.  If $n-k-m$ is an odd number, then fractional powers of  $\Del$ are implemented, and the resulting inversion formula is non-local; see Theorems \ref{kreven}  and  \ref{cr2t}.
In the case $m=1$ inversion formulas of this kind were suggested by Semyanistyi \cite{Se} and used in \cite {Str81, Ru4}. The reasoning from those papers is unapplicable when $m>1$. To get around  this difficulty, we apply  the relevant tools of harmonic analysis and several complex  variables.

We conclude the paper by Appendix containing evaluation of an auxiliary integral and some comments on the celebrated paper \cite{GGR} by Gelfand,  Graev, and  Rosu. The latter is  devoted to the Radon transform (\ref{pays}) for a pair of Grassmannians. A keen reader can recognize  our higher-rank cosine transform in  \cite[pp. 367, 368]{GGR}.  However, some important details in that paper are skipped. We  reproduce the relevant calculations  and explain basic difficulties.

{\bf Acknowledgements.} I am indebted to many people for useful and pleasant discussions related to this paper. My special thanks go to  Professors Tomoyuki Kakehi, Gestur \`Olafsson, Elena Ournycheva, Angela Pasquale, Fulvio Ricci,   Genkai Zhang. I am grateful  to Susanna Dann who carefully read the original Funk papers. Her help was very valuable.

\section{Preliminaries}  In this section we establish our notation and recall some basic facts; see \cite{OR1, OR4, OR3} for more details.

\subsection{Notation and conventions} Given a square matrix $a$,  $|a|$ stands for the absolute value of
  $\det(a)$.  We use  standard
 notation $O(n)$   and $SO(n)$ for the orthogonal group and the
 special orthogonal group of $\bbr^{n}$, respectively, with the  normalized
 invariant measure of total mass 1. The abbreviation ``$a.c.$'' denotes
 analytic continuation.

 Let  $\frM_{n,m}\sim\bbr^{nm}$ be  the
space of real matrices $x=(x_{i,j})$ having $n$ rows and $m$
 columns;  $dx=\prod^{n}_{i=1}\prod^{m}_{j=1} dx_{i,j}$;
   $x'$ is  the transpose of  $x$, $|x|_m=\det
(x'x)^{1/2}$, $I_m$
   is the identity $m \times m$
  matrix, and $0$ stands for zero entries.

  The Fourier transform  of a
function $\vp\in L^1(\frM_{n,m})$ is defined by \be\label{ft}
\hat\vp (y)=(\F\vp)(y)=\int_{\frM_{n,m}} e^{{\rm tr(iy'x)}} \vp
(x)\, dx,\qquad y\in\frM_{n,m} \; .\ee The relevant Parseval
equality
 has the form \be\label{pars} (\hat \vp, \hat \phi)=(2\pi)^{nm} \, (\vp, \phi),
\qquad (\vp, \phi)=\int_{\frM_{n,m}} \vp(x) \overline{\phi(x)} \,
dx.\ee This equality with $\phi$ in the Schwartz class $S(\frM_{n,m})$ of rapidly decreasing smooth functions
is used to define
the Fourier transform  of the corresponding  distributions.
If $\phi\in S(\frM_{n,m})$ and $\check \phi=\F^{-1}\phi$ is the
inverse Fourier transform of $\phi$, then, clearly,
\be \label{check} \check \phi (x)=(2\pi)^{-nm}\hat\phi (-x), \qquad
[\hat\phi]^\wedge (x)=(2\pi)^{nm}\phi (-x).\ee

Let $\S_m \sim \bbr^{m(m+1)/2}$  be the space of $m \times m$ real
symmetric matrices $s=(s_{i,j})$; $ds=\prod_{i \le j} ds_{i,j}$. We
denote by  $\Omega=\p$   the cone of positive definite matrices in $\S_m$;
$\bar\Omega$  is the closure of $\Omega$.  For $r\in\Omega$ ($r\in\bar\Omega$), we write
$r>0$ ($r\geq 0$). Given
 $a$ and  $b$ in  $S_m$, the inequality $a >b$  means $a - b \in
 \Omega$ and  the
symbol $\int_a^b f(s) ds$ denotes
 the integral over the set $(a +\Omega)\cap (b -\Omega)$.
 The group $G=GL(m,\bbr)$  of
 real non-singular $m \times m$ matrices $g$ acts transitively on $\Omega$
  by the rule $r \to g rg'$.  The corresponding $G$-invariant
 measure on $\Omega$ is \cite[p.
  18]{T}  \be\label{2.1}
  d_{*} r = |r|^{-d} dr, \qquad  d= (m+1)/2. \ee
If $T_m$ is the group  of upper triangular $m \times m$
  real matrices $t=(t_{i,j})$ with positive
diagonal elements, then each $r \in \Omega$ has a   unique
representation $r=t't$.

The  Siegel gamma  function of $\Omega$
 is defined by
\be\label{2.4}
 \gm (\a)\!=\!\int_{\Omega} \exp(-\tr (r)) |r|^{\a } d_*r
 =\pi^{m(m-1)/4}\prod\limits_{j=0}^{m-1} \Gam (\a\!-\! j/2).  \ee
 This integral is absolutely convergent if and only if $Re
 \, \a >d-1=(m-1)/2$, and extends  meromorphically  with
 the  polar set  \be\label {09k}\{(m-1-j)/2: j=0,1,2,\ldots\};\ee see
\cite{Gi}, \cite{FK},
 \cite{T}. For the corresponding beta integral we have
\be\label{2.6d}
 \int_a^b |r-a|^{\a -d} |b-r|^{\beta -d} dr= B_m (\a ,\b)\, |b-a|^{\a
 +\b-d}, \ee \[
 B_m (\a ,\b)=\frac{\gm (\a)\gm (\b)}{\gm (\a+\b)}, \qquad  Re
 \, \a >d-1, \; Re
 \, \b >d-1.\]

For $n\geq m$, let $\vnm= \{v \in \frM_{n,m}: v'v=I_m \}$
 be  the Stiefel manifold  of orthonormal $m$-frames in $\bbr^n$. This is a homogeneous space
with respect to the action $\vnm \ni v\to \gam v $, $\gam\in O(n)$,
so that   $\vnm=O(n)/O(n-m)$.
We  fix a measure $dv$ on $\vnm$, which is left $O(n)$-invariant,
right $O(m)$-invariant, and
  normalized by \be\label{2.16} \sigma_{n,m}
 \equiv \int_{\vnm} dv = \frac {2^m \pi^{nm/2}} {\gm
 (n/2)}, \ee
\cite[p. 70]{Mu}. The notation $d_\ast v=\sig^{-1}_{n,m} dv$ is used for the corresponding probability measure.

We denote by $G_{n,m}$ the Grassmann manifold of $m$-dimensional linear subspaces $\xi$ of $\bbr^n$ equipped with the $O(n)$-invariant probability measure $d_*\xi$. Every right $O(m)$-invariant function $f (v)$  on $V_{n,m}$ can be identified with a function $\tilde f (\xi)$ by the formula
$\tilde f (\{v\})=f(v)$, $ \{v\}=v\bbr^m\in G_{n,m}$,  so that $\int_{G_{n,m}} \tilde f (\xi)d_*\xi=\int_{V_{n,m}} f (v)d_*v$. Another identification is also possible, namely, $\stackrel{*}{f} (\{v\}^\perp)=f(v)$, $ \{v\}^\perp\in G_{n,n-m}$.

We will be dealing with several  coordinate systems on $\frM_{n,m}$ and $\vnm$.

\begin{lemma}\label{l2.3} {\rm (The polar decomposition).} Let $x \in \frM_{n,m}, \; n \ge m$. If  $\rank (x)=
m$, then \be\label{pol} x=vr^{1/2}, \qquad v \in \vnm,   \qquad
r=x'x \in\p,\ee and $dx=2^{-m} |r|^{(n-m-1)/2} dr dv$.
\end{lemma}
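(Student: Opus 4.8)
The plan is to split the claim into its two parts: the existence and uniqueness of the factorization $x=vr^{1/2}$, which is pure linear algebra, and the measure identity, which I would extract from symmetry rather than from a direct Jacobian computation. For the first part, $\rank(x)=m$ makes the Gram matrix $r:=x'x$ symmetric positive definite, so $r\in\Omega$; letting $r^{1/2}\in\Omega$ be its unique positive definite symmetric square root and putting $v:=xr^{-1/2}$, one checks $v'v=r^{-1/2}(x'x)r^{-1/2}=I_m$, so $v\in\vnm$ and $x=vr^{1/2}$. Conversely any such representation forces $x'x=r$ and $v=xr^{-1/2}$, so the map $\Phi:(v,r)\mapsto vr^{1/2}$ is a bijection of $\vnm\times\Omega$ onto the set of full-rank matrices, whose complement in $\frM_{n,m}$ is Lebesgue-null; since $r\mapsto r^{1/2}$ is smooth on $\Omega$, $\Phi$ is a diffeomorphism.

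Because $\Phi$ is a diffeomorphism onto a conull open set, the Lebesgue measure pulls back to a smooth positive measure, which I write as $dx=J(v,r)\,dv\,dr$, where $dv$ is the invariant measure on $\vnm$. I would then determine $J$ by two invariances. For $\gamma\in O(n)$ the left translation $x\mapsto\gamma x$ preserves $dx$ and satisfies $\Phi(\gamma v,r)=\gamma\,\Phi(v,r)$; since $dv$ is $O(n)$-invariant and $O(n)$ acts transitively on $\vnm$, this yields $J=J(r)$, independent of $v$. Next, the dilation $x\mapsto\lambda x$, $\lambda>0$, sends $dx$ to $\lambda^{nm}dx$ while corresponding to $(v,r)\mapsto(v,\lambda^2 r)$, under which $dr$ scales by $\lambda^{m(m+1)}$; comparing the two sides gives the homogeneity $J(\lambda^2 r)=\lambda^{m(n-m-1)}J(r)$, and hence $J(r)=c\,|r|^{(n-m-1)/2}$ for some constant $c$.

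Finally I would fix $c$ by testing against a Gaussian. Integrating $\etr(-x'x)$ and factoring into one-dimensional integrals gives $\int_{\frM_{n,m}}\etr(-x'x)\,dx=\pi^{nm/2}$, whereas the right-hand side equals $c\,\sigma_{n,m}\int_\Omega\etr(-r)\,|r|^{(n-m-1)/2}\,dr=c\,\sigma_{n,m}\,\gm(n/2)$, the last step using the definition (\ref{2.4}) of the Siegel gamma function with $\a=n/2$. Inserting the normalization $\sigma_{n,m}=2^m\pi^{nm/2}/\gm(n/2)$ from (\ref{2.16}), both sides agree precisely when $c=2^{-m}$, which is the asserted constant.

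The only genuine difficulty is the evaluation of $J$: a direct computation of the Jacobian of $(v,r)\mapsto vr^{1/2}$ is unpleasant because the map couples the Stiefel and cone variables, so the key idea is to replace it by the invariance-plus-homogeneity argument, which collapses the whole determinant to a single scalar that the Gaussian then pins down. The one technical point to verify is that $\Phi^\ast(dx)$ is absolutely continuous with a smooth density relative to $dv\,dr$, which is immediate once one knows $\Phi$ is a diffeomorphism onto a conull open set.
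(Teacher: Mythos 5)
The paper does not actually prove Lemma \ref{l2.3}; it cites \cite{Herz}, \cite{Mu}, \cite{FK}, where the density is obtained by a direct exterior--algebra computation of the Jacobian. Your idea of replacing that computation by invariance is a legitimate alternative route, and most of your argument is sound: the algebraic part (positive definiteness of $r=x'x$, uniqueness of the factorization, nullity of the rank--deficient set, smoothness of $\Phi$ and its inverse), the reduction $J=J(r)$ via left $O(n)$--invariance, and the Gaussian normalization (which, via (\ref{2.4}) and (\ref{2.16}), correctly yields $c=2^{-m}$) are all fine.

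The gap is the step ``$J(\lambda^2 r)=\lambda^{m(n-m-1)}J(r)$, and hence $J(r)=c\,|r|^{(n-m-1)/2}$.'' For $m>1$ a function on the cone $\Omega$ is not determined up to a constant by its degree of homogeneity along rays: for example, with $m=2$, $n=5$ both $\det r$ and $(\tr r)^2$ are homogeneous of the required degree (and even invariant under $r\mapsto\gamma' r\gamma$, $\gamma\in O(2)$), so the scalar dilation $x\mapsto\lambda x$ cannot distinguish $c\,|r|^{(n-m-1)/2}$ from, say, $c\,(\tr r)^{m(n-m-1)/2\cdot(1/1)}$--type densities. To close the gap you must use the full right action $x\mapsto xg$, $g\in GL(m,\bbr)$, which multiplies $dx$ by $|\det g|^{n}$ and sends $r\mapsto g'rg$ while sending the frame to $v\rho$ with $\rho=r^{1/2}g\,(g'rg)^{-1/2}$; one checks $\rho'\rho=I_m$, so $dv$ is preserved fiberwise, and $dr\mapsto|\det g|^{m+1}dr$ (this is the invariance underlying (\ref{2.1})). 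Comparing the two sides gives the genuinely multivariate homogeneity $J(g'rg)=|\det g|^{\,n-m-1}J(r)$, and taking $r=I_m$, $g=r_0^{1/2}$ yields $J(r_0)=J(I_m)\,|r_0|^{(n-m-1)/2}$. With this substitute for your dilation step, the rest of your proof, including the evaluation $J(I_m)=2^{-m}$ by the Gaussian, goes through.
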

For this statement see, e.g.,  \cite{Herz},  \cite{Mu},
\cite{FK}. Decomposition (\ref{pol}) (but with $r\in\bar\Omega$)  is valid
for {\it any} matrix $x\in\Ma$, cf. \cite[p. 589]{Mu}.

\begin{lemma}\label{sph} {\rm (\cite{P2}, \cite{Ru10})} If
$x \in \Ma, \; \rank (x)=m,\;  n \ge m$, then
  \[
x=vt, \qquad v \in \vnm,   \qquad t \in T_m,\] and
$$
dx=\prod\limits_{j=1}^m t_{j,j}^{n-j} \,dt_{j,j}\,dt_*dv, \qquad
dt_*=\prod\limits_{i<j} dt_{i,j}.
$$
\end{lemma}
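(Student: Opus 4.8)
The plan is to derive the factorization $x=vt$ from the Gram--Schmidt process and then reduce the Jacobian to the polar decomposition of Lemma \ref{l2.3} by passing through the Cholesky change of variables $r=t't$ on the cone $\Omega$. Since $\rank(x)=m$, the columns of $x$ are linearly independent, and Gram--Schmidt orthonormalization yields an orthonormal frame $v\in\vnm$ together with an upper triangular $t\in T_m$ with positive diagonal and $x=vt$; this is the QR factorization and is uniquely determined by $x$. Setting $r:=x'x=t'v'vt=t't$ (using $v'v=I_m$), we see that $r\in\Omega$ and that $r=t't$ is exactly the unique representation recorded before Lemma \ref{l2.3}, so the map $(v,t)\mapsto vt$ factors as $(v,t)\mapsto(v,r)\mapsto vr^{1/2}=x$. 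By Lemma \ref{l2.3},
\[ dx = 2^{-m}\,|r|^{(n-m-1)/2}\, dr\, dv, \qquad |r|=\det r=\prod_{j=1}^m t_{j,j}^{\,2},\]
so $|r|^{(n-m-1)/2}=\prod_{j=1}^m t_{j,j}^{\,n-m-1}$, and it remains only to compute the Jacobian of $t\mapsto r=t't$.

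For this Cholesky Jacobian I would argue by triangularity. Because $t$ is upper triangular, $r_{i,j}=\sum_{k=1}^{\min(i,j)} t_{k,i}\,t_{k,j}$. Ordering the independent entries $\{r_{i,j}:i\le j\}$ and $\{t_{i,j}:i\le j\}$ columnwise (increasing $j$, and increasing $i$ within a fixed column), one checks directly that $r_{i,j}$ involves only $t_{i,j}$ and variables preceding it in this order, with
\[ \frac{\partial r_{i,j}}{\partial t_{i,j}}=t_{i,i}\ \ (i<j), \qquad \frac{\partial r_{j,j}}{\partial t_{j,j}}=2t_{j,j}.\]
Thus the Jacobian matrix is triangular and its determinant is the product of these diagonal entries, $\prod_{i<j}t_{i,i}\cdot\prod_{j}2t_{j,j}=2^m\prod_{j=1}^m t_{j,j}^{\,m+1-j}$, giving
\[ dr = 2^m\prod_{j=1}^m t_{j,j}^{\,m+1-j}\;\prod_{i\le j} dt_{i,j}.\]

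Substituting this together with $|r|^{(n-m-1)/2}$ into the polar formula, the constants $2^{-m}$ and $2^m$ cancel, the exponents combine as $(n-m-1)+(m+1-j)=n-j$, and the product $\prod_{i\le j}dt_{i,j}$ splits as $\prod_j dt_{j,j}\cdot dt_*$; this yields exactly $dx=\prod_{j=1}^m t_{j,j}^{\,n-j}\,dt_{j,j}\,dt_*\,dv$. The only genuinely substantive step is the Cholesky Jacobian, and the delicate point there is verifying the triangular dependence pattern under the chosen ordering (alternatively, the same determinant follows from the differential identity $dr=(dt)'\,t+t'\,dt$ by a short exterior-product computation); the rest is bookkeeping of exponents.
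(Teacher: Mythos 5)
Your overall strategy is sound, and the two computational ingredients are correct: the QR factorization $x=vt$ with $t\in T_m$ exists and is unique for $\rank(x)=m$, the dependence pattern of $r_{i,j}=\sum_{k\le\min(i,j)}t_{k,i}t_{k,j}$ is indeed triangular in the columnwise order, and the resulting Cholesky Jacobian $dr=2^m\prod_j t_{j,j}^{\,m+1-j}\prod_{i\le j}dt_{i,j}$ combines with $|r|^{(n-m-1)/2}=\prod_j t_{j,j}^{\,n-m-1}$ to give the exponent $n-j$. Note that the paper itself offers no proof of this lemma (it is quoted from \cite{P2} and \cite{Ru10}), so there is no in-paper argument to compare against; your reduction to Lemma \ref{l2.3} plus the Cholesky Jacobian is the standard route.

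There is, however, one genuine gap in the way you splice the two decompositions together. You assert that $(v,t)\mapsto vt$ factors as $(v,t)\mapsto(v,r)\mapsto vr^{1/2}$ with $r=t't$. This is false as stated: $vt\neq v(t't)^{1/2}$ unless $t$ is symmetric. The frame appearing in the polar decomposition of $x=vt$ is not $v$ but $\tilde v=x(x'x)^{-1/2}=v\,\gamma(t)$, where $\gamma(t)=t(t't)^{-1/2}\in O(m)$. So Lemma \ref{l2.3} gives $dx=2^{-m}|r|^{(n-m-1)/2}\,dr\,d\tilde v$, and you are substituting $dv$ for $d\tilde v$ without justification. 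The repair is short but must be said: the change of variables $(v,t)\mapsto(\tilde v,r)=(v\gamma(t),\,t't)$ has block-triangular differential because $r$ and $\gamma$ depend only on $t$, and for each fixed $t$ the map $v\mapsto v\gamma(t)$ preserves $dv$ by the right $O(m)$-invariance of the measure on $\vnm$; hence $dr\,d\tilde v=dr\,dv$ and your bookkeeping goes through unchanged. Without invoking right $O(m)$-invariance at this point the argument does not close, since the two frames genuinely differ.
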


\begin{lemma}\label{l2.1} {\rm (The bi-Stiefel decomposition).} \ Let $k$,
$m$, and $n$  be positive integers satisfying $$1 \le k  \le n-1,
\qquad 1 \le m  \le n-1, \qquad k+ m \le n.$$

\noindent {\rm (i)} \ Almost all matrices $v \in \vnm$ can be
represented in the form
 \be\label{herz1}
  v= \left[\begin{array} {cc} a \\ u(I_m -a'a)^{1/2}
 \end{array} \right], \qquad a\in \frM_{k, m}, \quad u \in V_{n-k,m},
 \ee
  so that
   \be\label{2.10}
   \intl_{\vnm} f(v) dv
 =\intl_{0<a'a<I_m}d\mu(a)
 \intl_{ V_{n- k, m}} f\left(\left[\begin{array} {cc} a \\ u(I_m -a'a)^{1/2} \end{array}
 \right]\right) \, du,
\ee
 $$
 d\mu(a)=|I_m-a'a|^\delta da,\quad \del=(n-k)/2-d, \quad  d=(m+1)/2.
 $$

 \noindent  {\rm (ii)} \ If, moreover, $k \ge m$, then
\be\label{2.11}
 \intl_{\vnm} f(v) dv
 =\intl_0^{I_m} d\nu(r) \intl_{ V_{k, m}}dw
 \intl_{ V_{n-k, m}} f\left(\left[\begin{array} {cc} wr^{1/2} \\ u(I_m -r)^{1/2} \end{array}
 \right]\right) \ du,
\ee $$
 d\nu(r)=2^{-m} |r|^\gam |I_m -r|^\del dr, \qquad \gam
=k/2-d.
 $$
\end{lemma}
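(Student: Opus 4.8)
\emph{Proof plan.} The plan is to prove (i) by the standard Gaussian/polar-coordinate device and then deduce (ii) from it by one further polar decomposition. Throughout set $d=(m+1)/2$ and split $x\in\frM_{n,m}$ into a top block $x_1\in\frM_{k,m}$ and a bottom block $x_2\in\frM_{n-k,m}$, so that $dx=dx_1\,dx_2$. For (i) I would evaluate $\int_{\frM_{n,m}}F(x)\,dx$ for the test function $F(x)=\etr(-x'x)\,f(v(x))$ in two different ways, where $v(x)=x(x'x)^{-1/2}\in\vnm$ is the frame from the polar decomposition of Lemma \ref{l2.3}. Applying Lemma \ref{l2.3} to the whole matrix $x=vr^{1/2}$, $r=x'x$, gives $F(vr^{1/2})=\etr(-r)f(v)$, so that
\[
\int_{\frM_{n,m}}F(x)\,dx=2^{-m}\Big(\int_\Omega \etr(-r)\,|r|^{(n-m-1)/2}\,dr\Big)\int_{\vnm}f(v)\,dv=2^{-m}\,\gm(n/2)\int_{\vnm}f(v)\,dv,
\]
the inner integral being $\gm(n/2)$ by (\ref{2.4}).

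The second evaluation applies Lemma \ref{l2.3} to the bottom block alone, $x_2=us^{1/2}$ with $u\in V_{n-k,m}$ and $s=x_2'x_2$ (legitimate since $n-k\ge m$), keeping $x_1$ free. Setting $\rho=x_1'x_1+s=x'x$ and $a=x_1\rho^{-1/2}$, one checks that $I_m-a'a=\rho^{-1/2}s\rho^{-1/2}$, that $v(x)$ has top block $a$ and bottom block $u\,s^{1/2}\rho^{-1/2}$, and that $q:=s^{1/2}\rho^{-1/2}(I_m-a'a)^{-1/2}$ lies in $O(m)$. Thus the bottom block equals $u\,q\,(I_m-a'a)^{1/2}$. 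Since $q$ depends on $x_1,s$ but not on $u$, the right $O(m)$-invariance of $du$ (substitution $u\mapsto uq$) collapses the inner $V_{n-k,m}$-integral to $g(a):=\int_{V_{n-k,m}}f\big(\big[\begin{smallmatrix}a\\ u(I_m-a'a)^{1/2}\end{smallmatrix}\big]\big)\,du$, which no longer involves $q$.

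It then remains to change variables $(x_1,s)\mapsto(a,\rho)$. Factoring this through $(x_1,s)\mapsto(x_1,\rho)\mapsto(a,\rho)$ and using that right multiplication $a\mapsto a\rho^{1/2}$ has Jacobian $|\rho|^{k/2}$ on $\frM_{k,m}$, I expect $dx_1\,ds=|\rho|^{k/2}\,da\,d\rho$ on the region $\{a'a<I_m\}\times\Omega$; together with $|s|=|\rho|\,|I_m-a'a|$ and $\etr(-x'x)=\etr(-\rho)$, the powers of $|\rho|$ recombine to $(n-m-1)/2$, so the $\rho$-integral is again $\gm(n/2)$. The second evaluation thus becomes $2^{-m}\gm(n/2)\int_{a'a<I_m}|I_m-a'a|^{\delta}g(a)\,da$ with $\delta=(n-k-m-1)/2=(n-k)/2-d$. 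Equating the two evaluations and cancelling $2^{-m}\gm(n/2)$ gives (\ref{2.10}).

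Finally, (ii) should follow from (i): if $k\ge m$, a generic $a\in\frM_{k,m}$ has rank $m$, so Lemma \ref{l2.3} (with $n$ replaced by $k$) gives $a=wr^{1/2}$, $w\in V_{k,m}$, $r=a'a$, with $da=2^{-m}|r|^{(k-m-1)/2}\,dr\,dw$. Substituting $a'a=r$ and $I_m-a'a=I_m-r$ into (\ref{2.10}) and reading off $\gamma=(k-m-1)/2=k/2-d$ produces (\ref{2.11}). The hard part will be the second evaluation, namely identifying the frame $v(x)$ in the split coordinates and absorbing the orthogonal factor $q$ through right $O(m)$-invariance, along with verifying the Jacobian $|\rho|^{k/2}$; the remaining steps are routine bookkeeping with the Siegel gamma integral (\ref{2.4}).
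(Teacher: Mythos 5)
Your proof is correct. Note that the paper itself gives no proof of this lemma --- it only cites Herz, Grinberg--Rubin, and Zhang --- and your argument (evaluating $\int_{\frM_{n,m}}\etr(-x'x)f(x(x'x)^{-1/2})\,dx$ two ways, absorbing the orthogonal factor $q$ by right $O(m)$-invariance of $du$, and then obtaining (ii) from (i) by a second polar decomposition of $a$) is essentially the standard Herz/Grinberg--Rubin argument that those references carry out; all the key verifications ($q'q=I_m$, $dx_1\,ds=|\rho|^{k/2}\,da\,d\rho$, and the exponent bookkeeping giving $\gm(n/2)$ on both sides) are done correctly.
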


 For  $k=m$, this statement is due to  \cite[p. 495]{Herz}.
 The proof of Herz was extended in \cite{GR} to all $k+m \leq n$ and simplified in \cite{Zh1}; see also \cite{OlR}.

\begin{lemma} \label {hbi1} Let $u \in \vnk, \; v \in V_{n,m}; \; 1 \le k,m \le n$. If $f$ is a
function of $m \times k$ matrices, then \be \label {mlo}
 \intl_{\vnk} f(v'u) \,d_*u =
\intl_{V_{n,m}} f(v'u) \,d_*v. \ee
\end{lemma}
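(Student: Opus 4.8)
The plan is to realize both invariant probability measures as push-forwards of the normalized Haar measure on $O(n)$ and then exploit the inversion symmetry $\gamma\to\gamma^{-1}=\gamma'$ on the compact group $O(n)$. Throughout I assume $f$ is such that the integrals converge (e.g. $f$ continuous, or integrable against the relevant measures), which also legitimizes the use of Fubini below.

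First I would fix base frames $v_0\in V_{n,m}$ and $u_0\in V_{n,k}$, say the first $m$, respectively $k$, columns of $I_n$. Since $O(n)$ acts transitively on each Stiefel manifold, with $V_{n,m}=O(n)/O(n-m)$ and $V_{n,k}=O(n)/O(n-k)$, every $v\in V_{n,m}$ and $u\in V_{n,k}$ can be written as $v=\gamma v_0$ and $u=\gamma u_0$ for some $\gamma\in O(n)$; moreover the maps $\gamma\mapsto\gamma v_0$ and $\gamma\mapsto\gamma u_0$ push the Haar probability measure $d\gamma$ forward to $d_*v$ and $d_*u$, by uniqueness of the invariant probability measure on a homogeneous space. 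In particular, for fixed $v$, $\int_{V_{n,k}} f(v'u)\,d_*u=\int_{O(n)} f(v'\gamma u_0)\,d\gamma$.

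Next I would show that the left-hand side of (\ref{mlo}) is independent of $v$. Writing $L(v)=\int_{V_{n,k}} f(v'u)\,d_*u$ and replacing $v$ by $\gamma v$, the substitution $u\to\gamma u$, which leaves $d_*u$ invariant because $d_*u$ is $O(n)$-invariant, gives $L(\gamma v)=\int_{V_{n,k}} f(v'\gamma'\gamma u)\,d_*u=L(v)$; transitivity then forces $L$ to be constant on $V_{n,m}$. The identical argument shows $R(u)=\int_{V_{n,m}} f(v'u)\,d_*v$ is constant on $V_{n,k}$. To identify the two constants I would compute them as single integrals over $O(n)$: evaluating at base points, $L=L(v_0)=\int_{O(n)} f(v_0'\gamma u_0)\,d\gamma$, while $R=R(u_0)=\int_{O(n)} f((\gamma v_0)'u_0)\,d\gamma=\int_{O(n)} f(v_0'\gamma'u_0)\,d\gamma$. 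The change of variable $\gamma\to\gamma^{-1}=\gamma'$, under which Haar measure on $O(n)$ is invariant, carries the second integral into the first, so $L=R$, which is exactly (\ref{mlo}).

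A variant that avoids base points: once $L(v)$ and $R(u)$ are known to be constant, integrating the constant $L$ against $d_*v$ and the constant $R$ against $d_*u$ (both probability measures) and then applying Fubini to the common double integral $\int_{V_{n,m}}\int_{V_{n,k}} f(v'u)\,d_*u\,d_*v$ yields $L=R$ immediately. I do not expect a genuine obstacle here; the only points requiring care are the transitivity of $O(n)$ on both Stiefel manifolds, the correct normalization ensuring that a constant integrates to itself, and the mild integrability hypothesis on $f$ that justifies the push-forward identities and Fubini.
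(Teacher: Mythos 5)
Your argument is correct and is essentially the paper's own proof: the paper likewise writes $\int_{V_{n,k}} f(v'u)\,d_*u=\int_{O(n)} f(v'gu)\,dg$, uses $v'gu=((g^{-1}v)'u)$ via $g'=g^{-1}$, and invokes the inversion invariance of Haar measure on $O(n)$ to identify the two sides. Your additional observations (constancy in $v$ and $u$, the Fubini variant) are correct but not needed beyond what the paper already does.
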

\begin{proof} We should observe that formally
 the left-hand side is a function of $v$, while the right-hand side
 is a function of $u$. In fact, both are constant. To prove
 (\ref{mlo}),
 let $G=O(n), \; g \in G, \; g_1=g^{-1}$. The left-hand side is
\[
 \int_G  f(v'gu) \,dg  =\int_G  f((g_1v)'u)\, dg_1,
\]
which equals the right-hand side.
\end{proof}

The following statement is a particular case of Lemma 2.5 from \cite{GR}.

\begin{lemma}\label {25gr} Let $A \in  \frM_{k,\ell},\; S=A'A \in \pl, \;  \ell \le m<k, \; m+\ell \le k$,
\[ \del =(\ell +1)/2, \qquad c=2^{-\ell} \sig_{k-m,\ell}
 \sig_{m,\ell}/\sig_{k,\ell}.\]
 Then
\bea &&\int_{V_{k,m}} \!f(A'\om)\, d_*\om\\&&=\!c\, |S|^{\del
-k/2}\int_0^S |S\!-\!s|^{(k-m)/2 -\del}\, |s|^{m/2 -\del}
ds\int_{V_{m,\ell}} \!\! f(s^{1/2}\theta)\, d_*\theta.\nonumber\eea
\end{lemma}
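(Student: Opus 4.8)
The plan is to transfer the integral from the ``wide'' Stiefel manifold $V_{k,m}$ to the ``tall'' one $V_{k,\ell}$, where the bi-Stiefel decomposition applies in its standard form, and then to extract the matrix interval $(0,S)$ and the $V_{m,\ell}$-integral by two changes of variable.

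First I would normalize $A$. Since $S=A'A>0$, the matrix $A$ has rank $\ell$ and factors as $A=\eta\, S^{1/2}$ with $\eta=AS^{-1/2}\in V_{k,\ell}$, so that $A'\omega=S^{1/2}\eta'\omega$. Setting $h(\,\cdot\,)=f(S^{1/2}\,\cdot\,)$, the integrand becomes $h(\eta'\omega)$, a function of the $\ell\times m$ matrix $\eta'\omega$, and Lemma \ref{hbi1} (ambient group $O(k)$, frames $\omega\in V_{k,m}$ and $\eta\in V_{k,\ell}$) gives
\[
\int_{V_{k,m}} h(\eta'\omega)\,d_*\omega=\int_{V_{k,\ell}} h(\eta'\omega)\,d_*\eta ,
\]
both sides being constant. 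On the right $\omega$ is fixed, so I may take $\omega=\left[\begin{array}{c} I_m\\ 0\end{array}\right]$; then $\eta'\omega=\beta'$, where $\beta\in\frM_{m,\ell}$ is the top $m\times\ell$ block of $\eta$, and the quantity we must compute equals $\intl_{V_{k,\ell}} f(S^{1/2}\beta')\,d_*\eta$.

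The advantage is that $\beta$ is now a tall block ($m\ge\ell$), so $\beta'\beta\in\pl$ is nonsingular and Lemma \ref{l2.1}(i) applies without reinterpretation, with ambient frame $V_{k,\ell}$ and top block of height $m$. Because $k-m\ge\ell$, the complementary frame lives in the nonempty manifold $V_{k-m,\ell}$; integrating it out contributes $\sig_{k-m,\ell}$, while the density of $\beta$ is $|I_\ell-\beta'\beta|^{(k-m-\ell-1)/2}$. Hence
\[
\intl_{V_{k,\ell}} f(S^{1/2}\beta')\,d_*\eta=\frac{\sig_{k-m,\ell}}{\sig_{k,\ell}}\intl_{\beta'\beta<I_\ell} |I_\ell-\beta'\beta|^{(k-m-\ell-1)/2}\, f(S^{1/2}\beta')\,d\beta .
\]

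It remains to substitute. Putting $Y=S^{1/2}\beta'\in\frM_{\ell,m}$ gives $d\beta=|S|^{-m/2}\,dY$ and $I_\ell-\beta'\beta=S^{-1/2}(S-YY')S^{-1/2}$, so the density turns into $|S|^{-(k-m-\ell-1)/2-m/2}|S-YY'|^{(k-m-\ell-1)/2}$ on $\{YY'<S\}$, the power of $|S|$ collapsing to $\delta-k/2$ with $\delta=(\ell+1)/2$. Then the polar decomposition of Lemma \ref{l2.3} applied to $Y'\in\frM_{m,\ell}$, namely $Y'=\theta\,s^{1/2}$ with $\theta\in V_{m,\ell}$, $s=YY'\in\pl$ and $dY=2^{-\ell}|s|^{(m-\ell-1)/2}\,ds\,d\theta$, converts $\{YY'<S\}$ into the matrix interval $0<s<S$, replaces $|S-YY'|$ by $|S-s|$, and leaves $\int_{V_{m,\ell}} f(s^{1/2}\theta')\,d\theta=\sig_{m,\ell}\int_{V_{m,\ell}} f(s^{1/2}\theta')\,d_*\theta$ (the argument $s^{1/2}\theta'$ being what $f(s^{1/2}\theta)$ denotes in the statement). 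Collecting exponents yields $(k-m)/2-\delta$ on $|S-s|$ and $m/2-\delta$ on $|s|$, and the constants combine into exactly $c=2^{-\ell}\sig_{k-m,\ell}\sig_{m,\ell}/\sig_{k,\ell}$. The step that demands the most care is this last bookkeeping: one must verify that the Jacobian $|S|^{-m/2}$ and the factor $|S|^{-1}$ coming from $|I_\ell-\beta'\beta|$ combine to the single power $|S|^{\delta-k/2}$, and that Lemma \ref{hbi1} has been invoked with the frame dimensions paired correctly, so that no spurious surface-area factor survives.
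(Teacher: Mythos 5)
Your proof is correct. Note, however, that the paper does not actually prove this statement: it is imported verbatim as ``a particular case of Lemma 2.5 from \cite{GR}'', so there is no in-paper argument to compare against. What you have supplied is a self-contained derivation built entirely from tools already present in the preliminaries, and it holds up under checking: the reduction $A=\eta S^{1/2}$ followed by Lemma \ref{hbi1} legitimately trades the integral over $V_{k,m}$ for one over $V_{k,\ell}$ (both sides being constant, fixing $\om=\left[\begin{smallmatrix} I_m\\ 0\end{smallmatrix}\right]$ is allowed); the hypotheses $m<k$ and $m+\ell\le k$ are exactly what Lemma \ref{l2.1}(i) needs with ambient frame $V_{k,\ell}$ and top block of height $m$, giving the density exponent $(k-m)/2-\del$; and the two changes of variable produce $d\beta=|S|^{-m/2}dY$ and $|I_\ell-\beta'\beta|=|S|^{-1}|S-YY'|$, whose powers of $|S|$ combine to $-m/2-(k-m-\ell-1)/2=\del-k/2$ as claimed, while Lemma \ref{l2.3} applied to $Y'$ (valid since $m\ge\ell$) yields the factor $2^{-\ell}|s|^{m/2-\del}$ and the interval $0<s<S$. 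Your remark that the statement's $f(s^{1/2}\theta)$ must be read as $f(s^{1/2}\theta')$ for $\theta\in V_{m,\ell}$ is the right resolution of a genuine notational abuse in the lemma as printed. The net effect of your argument is that the lemma need not be cited from \cite{GR} at all; it follows from Lemmas \ref{hbi1}, \ref{l2.1}, and \ref{l2.3} by routine bookkeeping, which is a small but real gain in self-containment.
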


\subsection{The composite power function} \label {secpf} Let  $\Omega$ be the cone
of positive definite symmetric $m\times m$ matrices. Given $r=(r_{i,j})\in\Omega$, let $\Del_0(r)=1$, $\Del_1(r)=r_{1,1}$,
$\Del_2(r)$, $\ldots$, $\Del_m(r)=|r|$ be the corresponding
principal minors, which are strictly positive. For
$\lv=(\lam_1,\dots,\lam_m)\in\bbc^m$, the composite power function
of the cone  $\Omega$  is defined by
 \bea\label{pf} r^{\lv}&=&\prod\limits_{i=1}^m
 \left[\frac{\Del_i (r)}{\Del_{i-1} (r)}\right]^{\lam_i/2}\\&=&
  \Del_1 (r)^{\frac{\lam_1 -\lam_2}{2}}  \ldots \Del_{m-1}
(r)^{\frac{\lam_{m-1} -\lam_m}{2}} \Del_m (r)^{\frac{\lam_m}{2}}.\nonumber
\eea
  If $r=t't, \; t
=(t_{i,j})\in T_m$, then \be\label {mku5} r^{{\lv}}=\prod_{j=1}^m
 t_{j,j}^{\lam_j}.\ee This implies the following equalities:
\bea\label{pr1} r^{{\lv+\mv}}&=&r^{{\lv}}\;r^{{\mv}}, \quad
r^{{\lv+\av_0}}=r^{\lv}|r|^{\a/2}, \quad \av_0=(\a,\dots,\a);
\\ \label{pr6} \; (t'rt)^{{\lv}}&=&(t't)^{\lv}\;r^{{\lv}},\quad t\in
T_m. \eea The reverses of $\lv=(\lam_1,\dots,\lam_m)$ and
$r=(r_{i,j})\in\Omega$ are defined by \be\label{ommp} \lv_\ast=(\lam_m,\dots,\lam_1);
\qquad r_\ast =\om r\om,\qquad  \om=\left[\begin{array}{ccccc}
0 & {}   & {}    & 1 \\
                              {} & {}  & {.}    & {} \\
                              {} & {.}   & {}   & {} \\
                               1 & {}   & {}    & 0
\end{array} \right],\ee
so that $$ (\lv_\ast)_j=\lam_{m-j+1}, \qquad
(r_\ast)_{i,j}=r_{m-i+1, m-j+1}.$$ We have
 \be\label{pr4}
 r^{{\lv_\ast}}=(r^{-1})_\ast^{-\lv},\qquad
(r^{-1})^{{\lv}}=r_\ast^{{-\lv_\ast}}.\ee
The  relevant gamma function  is defined by \be\label{gf}
\Gam_{\Omega}  (\lv) =\intl_{\Omega} r^{\lv} e^{-{\rm tr} (r)} d_{*}
r=\pi^{m(m-1)/4}\prod\limits_{j=1}^{m} \Gam ((\lam_j- j+1)/2);\ee
 see, e.g., \cite[p. 123]{FK}. The integral in (\ref{gf}) converges
 absolutely if and only if $Re \, \lam_j>j-1$ for all $j=1,\dots,m$, and extends
 meromorphically to all $\lam \in \bbc^m$.
 The following relation holds:
 \be\label{eq11} \intl_{\Omega}
r^{\lv} e^{-{\rm tr} (rs)} d_{*} r= \Gam_{\Omega} (\lv) \,
s_\ast^{{-\lv_\ast}}, \qquad s\in\Omega.\ee
If $\lv_0=(\lam,\dots,\lam)\;(\in \bbc^m)$, then (cf. (\ref{2.4}))
 \be\label{det} r^{{\lv_0}}=|r|^{\lam/2},\qquad \Gam_{\Omega}  (\lv_0)=\gm(\lam/2).\ee

 \subsection{The Radon transform on the space of matrices}
  The main
references for this topic are  \cite{OR1}, \cite{OR2}, \cite{OR5},
\cite{P2}, \cite{Sh1}, \cite{Sh2}. Close results can be found in \cite{GK3, Gra}.
 We fix  positive integers $k,n$,
and $m$, $0<k<n$, and let $\vnk$ be the Stiefel manifold of
orthonormal $k$-frames in $\bbr^n$. For $\; u\in\vnk$ and
$t\in\frM_{k,m}$, the linear manifold \be\label{plane} \tau=
\tau(u,t)=\{x\in\Ma:\, u'x=t\} \ee
 is called a {\it  matrix $(n-k)$-plane} in $\Ma$.  We denote by  $\Gr$ the
 set of all such
 planes and
consider the  Radon transform
\[ (\R_k
f) (\tau)=\int_{x \in \tau} f(x),\] that sends a function $f$ on
$\frM_{n,m}$ to a function $\R_k f$  on $\Gr$.  Precise meaning of
this integral is
 the following:
\be\label{4.9} (\R_k f)(u,t)=\int_{\frM_{n-k,m}} f\left(g_u
\left[\begin{array} {c} \om \\t
\end{array} \right]\right)d\om,
\ee where $ g_u \in SO(n)$ is a rotation satisfying
\be\label{4.24}
g_u u_0=u, \qquad u_0=\left[\begin{array} {c}  0 \\
I_{k} \end{array} \right] \in \vnk. \ee

 The next statement is a matrix generalization of
the  projection-slice theorem. It links together the
Fourier transform (\ref{ft}) and the Radon transform (\ref{4.9}). In
the case $m=1$, this theorem can be found in \cite[p. 11]{Na} ($k=1$)
and \cite[p. 283]{Ke} (any $1\le k<n$).
 \begin{theorem}\label{CST} {\rm(\cite{OR6})}
 For  $f\in L^1(\Ma)$ and  $1\le m\le k$,
\be\label{4.20} (\F f)(\xi b)=[\tilde \F (\R_k f)(u,\cdot)](b),
\quad u\in\vnk, \quad b\in\frM_{k,m} \ee where $\tilde \F$ stands
for the Fourier transform  on  $\frM_{k,m}$.
 \end{theorem}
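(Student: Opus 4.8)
The plan is to reduce the identity (\ref{4.20}) to a single orthogonal change of variables in the defining integral for $\F f$, using the very rotation $g_u$ that enters the definition (\ref{4.9}) of $\R_k f$, followed by an application of Fubini's theorem. First I would write both sides explicitly. Interpreting the argument of $\F f$ as $ub\in\frM_{n,m}$ (with $u\in\vnk$, $b\in\frM_{k,m}$), formula (\ref{ft}) gives that the left-hand side equals
\[ (\F f)(ub)=\intl_{\frM_{n,m}} e^{\tr (i\,b'u'x)} f(x)\, dx, \]
since $(ub)'x=b'u'x$. By the definition of $\tilde\F$ on $\frM_{k,m}$, the right-hand side is
\[ \intl_{\frM_{k,m}} e^{\tr (i\,b't)}\,(\R_k f)(u,t)\, dt. \]
Both integrals are absolutely convergent: the first because $f\in L^1(\frM_{n,m})$, and the second because $\|(\R_k f)(u,\cdot)\|_{L^1(\frM_{k,m})}\le\|f\|_{L^1(\frM_{n,m})}$ by Fubini applied to (\ref{4.9}). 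This integrability is exactly what will license the interchanges of order below.

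Next I would substitute $x=g_u y$ in the left-hand integral, where $g_u\in SO(n)$ is the rotation from (\ref{4.24}) satisfying $g_u u_0=u$ with $u_0=\left[\begin{array}{c} 0 \\ I_k \end{array}\right]$. Because $g_u$ acts as an orthogonal transformation of $\bbr^{nm}$, we have $dx=dy$. The decisive simplification is that $u'x=u'g_u y=(g_u'u)'y=u_0'y$, using $g_u'u=g_u^{-1}u=u_0$. Writing $y=\left[\begin{array}{c} \om \\ s \end{array}\right]$ with $\om\in\frM_{n-k,m}$ (the top $n-k$ rows) and $s\in\frM_{k,m}$ (the bottom $k$ rows), one gets $u_0'y=s$, so the exponent collapses to $\tr(i\,b's)$ and no longer depends on $\om$.

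Finally I would peel off the $\om$-integration by Fubini:
\[ (\F f)(ub)=\intl_{\frM_{k,m}} e^{\tr (i\,b's)}\left[\intl_{\frM_{n-k,m}} f\!\left(g_u\left[\begin{array}{c} \om \\ s \end{array}\right]\right) d\om\right] ds. \]
By (\ref{4.9}) the inner bracket is precisely $(\R_k f)(u,s)$, and the outer integral is then exactly $[\tilde\F (\R_k f)(u,\cdot)](b)$, which is the claimed equality. The computation itself is short, so the only genuine point requiring care is that (\ref{4.9}), and hence the whole statement, be independent of the particular choice of $g_u$, which is fixed only up to right multiplication by the stabilizer of $u_0$; I would note that this independence holds because the inner integral over $\om\in\frM_{n-k,m}$ is invariant under the orthogonal transformations of the top $(n-k)$-block that fix $u_0$, so no ambiguity arises. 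Verifying the absolute convergence that licenses Fubini is the other routine obligation, already supplied by $f\in L^1(\frM_{n,m})$.
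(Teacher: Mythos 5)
Your argument is correct: the change of variables $x=g_u y$, the observation that $u'g_u y=u_0'y=s$, and Fubini (licensed by $f\in L^1(\Ma)$) give exactly the claimed identity, and your remark that the inner $\om$-integral is invariant under the stabilizer of $u_0$ correctly disposes of the only ambiguity in the definition (\ref{4.9}). The paper itself states Theorem \ref{CST} without proof, citing \cite{OR6}; your computation is the standard projection-slice argument (the matrix analogue of the rank-one case in \cite{Na, Ke}) and coincides with the proof in that reference.
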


 \subsection{Riesz potentials and the Cayley-Laplace operator} \label {se4} The {\it Riesz distribution} $h_\a$ on $\Ma$ is defined by
 \be\label{rd} (h_\a, f)= a.c. \; \frac{1}{\gam_{n,m} (\a)}
  \int_{\Ma} |x|^{\a-n}_m f(x) dx,\qquad f\in\S(\Ma),
  \ee
 \be\label{gam} \gam_{n,m} (\a)\!=\!\frac{2^{\a m} \, \pi^{nm/2}\, \Gam_m
(\a/2)}{\Gam_m ((n\!-\!\a)/2)},\qquad \a\!\neq \!n\!-\!m\!+\!1, \,
n\!-\!m\!+\!2, \ldots . \ee  For $Re \, \a>m-1$, the distribution
$h_\a$ is
 regular and agrees with the ordinary function $h_\a(x)=|x|^{\a-n}_m
 /\gam_{n,m}(\a)$.
 The {\it Riesz potential} of a function $f\in\S(\Ma)$
 is defined by \be\label{rie-z} (I^\a
f)(x)=(h_\a, f_x), \qquad f_x(\cdot)= f(x-\cdot).\ee For $Re \,
\a>m-1$, $ \a\neq n-m+1, \, n-m+2, \ldots \;$, (\ref{rie-z}) is
represented in the classical form by the absolutely convergent
integral
 \be\label{rie} (I^\a f)(x)=\frac{1}{\gam_{n,m} (\a)} \int_{\Ma}
f(x-y) |y|^{\a-n}_m dy.\ee This integral operator is well known in
the rank-one case $m=1$ \cite {Ru96, SKM, Sa2, St1}.

 The {\it Cayley-Laplace operator} $\Del$  on the space $\Ma$  is  defined by \be\label{K-L} \Del=\det(\d
'\d). \ee Here $\partial$ is an $n\times m$  matrix whose entries
are partial derivatives $\d/\d x_{i,j}$. In the Fourier transform
terms, the action of $\Del$ represents a multiplication by the
polynomial $(-1)^m |y|_m^2$.

\begin{theorem}\label{m76}\cite [Theorem 5.2]{Ru10} {}\hfil

\noindent Let $f\in\S(\Ma)$,  $ \;\a\in \bbc$,  $ \;\a\neq n-m+1, \,
n-m+2, \ldots \,$. Then
\be\label{fou} (h_\a, f)=(2\pi)^{-nm}(|y|_m^{-\a},(\F f)(y)), \ee
\be \label{Dkh}(-1)^{mk}\Del^k
h_{\a+2k}\!=\! h_{\a}, \quad I^{-2k}\!=\!(-1)^{mk}\Del^k; \quad  k\!=\!0,1,2,\dots \,.\ee
\end{theorem}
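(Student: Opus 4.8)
The plan is to reduce everything to the Fourier side, where the homogeneous kernel $|x|_m^{\a-n}$ becomes a power of $|y|_m$ and the Cayley--Laplace operator (\ref{K-L}) becomes the multiplier $(-1)^m|y|_m^2$. First I would compute the Fourier transform of the kernel in the strip $m-1<Re\,\a<n-m+1$, where both the integral (\ref{rd}) and the function $|y|_m^{-\a}$ are absolutely convergent. Starting from the subordination identity produced by the Siegel gamma integral (\ref{eq11})--(\ref{det}),
\[ |x|_m^{\a-n}=\frac{1}{\gm((n-\a)/2)}\intl_{\Omega}|s|^{(n-\a)/2}\,e^{-\tr(sx'x)}\,d_*s, \]
I would interchange the $s$- and $x$-integrations (legitimate in the strip) and evaluate the Gaussian integral $\intl_{\Ma}e^{\tr(iy'x)}e^{-\tr(sx'x)}\,dx$. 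Since $\tr(sx'x)$ and $\tr(iy'x)$ split as sums over the rows of $x$, this integral factors into $n$ scalar Gaussian transforms and equals $\pi^{nm/2}|s|^{-n/2}e^{-\tr(s^{-1}y'y)/4}$. Substituting $s\mapsto s^{-1}$, under which $d_*s$ is invariant, and applying (\ref{eq11})--(\ref{det}) a second time, I obtain
\[ (\F\,|x|_m^{\a-n})(y)=\frac{2^{\a m}\pi^{nm/2}\gm(\a/2)}{\gm((n-\a)/2)}\,|y|_m^{-\a}=\gam_{n,m}(\a)\,|y|_m^{-\a}, \]
with $\gam_{n,m}(\a)$ exactly the constant (\ref{gam}).

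Next I would deduce (\ref{fou}). Writing $f$ via the inversion relation (\ref{check}) as $f(x)=(2\pi)^{-nm}\F(\F f)(-x)$, using the evenness of $|x|_m$, and then invoking the multiplication formula $\intl(\F g)\,\phi=\intl g\,(\F\phi)$ (valid for the normalization (\ref{ft})) with $g=|x|_m^{\a-n}$ and $\phi=\F f$, the first step turns $\intl_{\Ma}|x|_m^{\a-n}f(x)\,dx$ into $(2\pi)^{-nm}\gam_{n,m}(\a)\intl|y|_m^{-\a}(\F f)(y)\,dy$. Dividing by $\gam_{n,m}(\a)$ yields (\ref{fou}) throughout the strip. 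Both sides are meromorphic in $\a$ — the left by the very definition of $h_\a$ in (\ref{rd}) as an analytic continuation, the right because $|y|_m^{-\a}$ is a meromorphic family of distributions — and they agree on a nonempty open set, so (\ref{fou}) persists for all admissible $\a\neq n-m+1,n-m+2,\dots$. In particular $\F h_\a=|y|_m^{-\a}$ as tempered distributions.

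The identities (\ref{Dkh}) then follow by comparing Fourier multipliers. Since $\Del$ acts as multiplication by $(-1)^m|y|_m^2$ (recorded just before the statement), duality gives $\F(\Del^k h_{\a+2k})=\big((-1)^m|y|_m^2\big)^k\F h_{\a+2k}=(-1)^{mk}|y|_m^{2k}|y|_m^{-(\a+2k)}=(-1)^{mk}|y|_m^{-\a}=(-1)^{mk}\F h_\a$, and injectivity of $\F$ on tempered distributions yields $(-1)^{mk}\Del^k h_{\a+2k}=h_\a$. For the operator identity I would note that the Riesz potential (\ref{rie-z}) is convolution with $h_\a$, so $\F(I^{-2k}f)=|y|_m^{2k}\F f$, while $\F((-1)^{mk}\Del^k f)=(-1)^{mk}(-1)^{mk}|y|_m^{2k}\F f=|y|_m^{2k}\F f$; the multipliers coincide, hence $I^{-2k}=(-1)^{mk}\Del^k$.

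The only genuinely delicate part is the first step: carrying out the Gaussian computation and, more importantly, justifying both the Fubini interchange inside the convergence strip and the passage from the classical identity there to an identity of meromorphic distribution-valued functions of $\a$. Once $\F h_\a=|y|_m^{-\a}$ is secured as an identity of tempered distributions, the remaining assertions reduce to routine manipulations with Fourier multipliers and require no further analytic input.
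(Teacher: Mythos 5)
The paper does not prove Theorem \ref{m76} at all: it is imported verbatim from \cite[Theorem 5.2]{Ru10}, so there is no in-paper argument to compare yours with. Your computation itself is sound where it applies: the subordination via (\ref{eq11})--(\ref{det}), the row-by-row Gaussian integral $\pi^{nm/2}|s|^{-n/2}e^{-\tr(s^{-1}y'y)/4}$, the inversion $s\mapsto s^{-1}$, and the resulting constant $2^{\a m}\pi^{nm/2}\gm(\a/2)/\gm((n-\a)/2)=\gam_{n,m}(\a)$ all check out, and the multiplier arguments for (\ref{Dkh}) are routine once $\F h_\a=|y|_m^{-\a}$ is in hand.

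The genuine gap is in the step ``they agree on a nonempty open set.'' The function $|x|_m^{\a-n}$ is locally integrable (near the rank-deficient cone $V$, which has codimension $n-m+1$) exactly when $Re\,\a>m-1$, and $|y|_m^{-\a}$ exactly when $Re\,\a<n-m+1$; so your strip $m-1<Re\,\a<n-m+1$ is nonempty only when $n>2m-2$. The theorem assumes nothing beyond $n\ge m$, and for $m\le n\le 2m-2$ (already $n=m\ge 2$) the two sides of (\ref{fou}) have \emph{disjoint} domains of absolute convergence, so there is no open set on which to match the two meromorphic continuations and your argument has no starting point. This is precisely the ``essential difficulty'' the paper flags in Remark \ref{bnuk}: on the diagonal $\lam_1=\dots=\lam_m$ the two convergence conditions may be incompatible, which is why the paper (following Khekalo's extension of Stein's argument) works instead with the composite power function (\ref{eq8}), where the componentwise conditions $j-n-1<Re\,\lam_j<j-m$ can always be satisfied simultaneously by staggering the $\lam_j$ off the diagonal, and then recovers the scalar identity by analytic continuation in the vector parameter $\lv$. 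To close your proof in full generality you must either take that detour, or bridge the two disjoint convergence regions by some other device (e.g.\ a Bernstein-type identity $\Del\,|x|_m^{\a-n+2}=b(\a)\,|x|_m^{\a-n}$ that shifts $\a$ by $2$ and lets you walk one side's domain into the other's). As written, your proof is complete only under the additional hypothesis $n\ge 2m-1$.
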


Thus, one can formally write \be  \label
{bvv}I^\a= [(-1)^m \Del ]^{-\a/2}.\ee

 The next statement generalizes (\ref{fou}) to the case of composite power functions.

 \begin{lemma} Let $\phi\in\S(\frM_{n,m})$. Then for all $\lv\in \bbc^m$,
 \be \label{eq8}  \intl_{\frM_{n,m}}  \frac{(y' y)^{
\lv}}{\Gam_{\Omega} (\lv+\nv)} \;\overline{(\F\phi)(y)}\, dy=c_\lv\,
\intl_{\frM_{n,m}}\frac{(x' x)_*^{-\lv_* -\nv}}{\Gam_{\Omega} (-\lv_*)} \;\overline{\phi(x)}\, dx, \ee
where $c_\lv=2^{nm+|\lv|}\pi^{nm/2}, \; |\lv|=\lam_1+\ldots +\lam_m$.
\end {lemma}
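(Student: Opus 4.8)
The plan is to reformulate (\ref{eq8}) as a single Fourier identity for the composite power function and to establish it by the Gaussian (Laplace) method built on (\ref{eq11}). First I would strip off the test function. With $F(y)=(y'y)^{\lv}/\Gamma_\Omega(\lv+\nv)$, the relation $\overline{(\F\phi)(y)}=\intl_{\frM_{n,m}}e^{-\tr(iy'x)}\overline{\phi(x)}\,dx$ and Fubini give $\intl_{\frM_{n,m}} F\,\overline{\F\phi}=\intl_{\frM_{n,m}} \hat F(-x)\,\overline{\phi(x)}\,dx$; since $F$ depends on $y$ only through $y'y$ it is even, so $\hat F(-x)=\hat F(x)$. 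Hence (\ref{eq8}) is equivalent to the distributional identity
\[
\F\Big[\frac{(y'y)^{\lv}}{\Gamma_\Omega(\lv+\nv)}\Big]=c_\lv\,\frac{(x'x)_*^{-\lv_*-\nv}}{\Gamma_\Omega(-\lv_*)},\qquad c_\lv=2^{\,nm+|\lv|}\pi^{nm/2},
\]
and it suffices to prove this.

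The basic analytic input is the Gaussian Fourier formula
\[
\intl_{\frM_{n,m}}e^{\tr(iy'x)}\,e^{-\tr(s\,x'x)}\,dx=\frac{\pi^{nm/2}}{(\det s)^{n/2}}\,e^{-\frac14\tr(s^{-1}y'y)},\qquad s\in\Omega,
\]
which follows at once by writing $\tr(s\,x'x)=\sum_{i=1}^{n}x_i\,s\,x_i'$ over the rows $x_i$ of $x$ and applying the scalar Gaussian integral $n$ times. I would combine it with the Laplace representation of the power function, which is just (\ref{eq11}) read as
\[
(x'x)_*^{-\kv_*}=\frac{1}{\Gamma_\Omega(\kv)}\intl_{\Omega}s^{\kv}\,e^{-\tr(s\,x'x)}\,d_*s,
\]
valid for $Re\,\kappa_j>j-1$. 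Applying $\F$ under the integral sign (legitimate after pairing with a Schwartz function, where everything is absolutely convergent) and inserting the Gaussian formula gives $\F[(x'x)_*^{-\kv_*}](y)=\frac{\pi^{nm/2}}{\Gamma_\Omega(\kv)}\intl_{\Omega}s^{\kv-\nv}e^{-\frac14\tr(s^{-1}y'y)}\,d_*s$, where I used $(\det s)^{-n/2}=s^{-\nv}$.

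The core step evaluates this last integral. I would substitute $s\mapsto s^{-1}$ (under which $d_*s$ is invariant), use $(s^{-1})^{\kv-\nv}=s_*^{-(\kv-\nv)_*}$ from (\ref{pr4}), remove the reversal by the change of variable $s\mapsto\om s\om$ with the matrix $\om$ of (\ref{ommp}) (a member of the $GL(m,\bbr)$-action, hence $d_*s$-preserving), and apply (\ref{eq11}) a second time, now with the argument scaled by $\tfrac14$. The scaling is absorbed through the homogeneity $(\theta r)^{\lv}=\theta^{|\lv|/2}r^{\lv}$, and one obtains
\[
\F[(x'x)_*^{-\kv_*}](y)=2^{\,nm-|\kv|}\pi^{nm/2}\,\frac{\Gamma_\Omega(\nv-\kv_*)}{\Gamma_\Omega(\kv)}\,(y'y)^{\kv-\nv}.
\]
Setting $\kv=\lv+\nv$ makes $-\kv_*=-\lv_*-\nv$ and $\kv-\nv=\lv$, and applying the inversion rule (\ref{check}) (trivial reflection, again by evenness) flips this into the displayed identity with constant $c_\lv=(2\pi)^{nm}/(2^{-|\lv|}\pi^{nm/2})=2^{\,nm+|\lv|}\pi^{nm/2}$.

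I expect the only real difficulty to be the exact bookkeeping of the two reversals $\lv\mapsto\lv_*$, $r\mapsto r_*$ and of the power $2^{|\lv|}$. The inversion $s\mapsto s^{-1}$ turns the leading principal minors defining $r^{\lv}$ into trailing ones, which is precisely why the reversal enters through (\ref{pr4}); and the factor $\tfrac14$ in the Gaussian, after the substitution, produces exactly the $2^{|\lv|}$ that separates $c_\lv$ from $2^{nm}\pi^{nm/2}$, so this is the one place where a slip is easy. All interchanges of integration are justified on the nonempty open region of $\lv\in\bbc^m$ where $Re\,\lam_j>j-1-n$ and $Re\,\lam_{m-j+1}<1-j$ for all $j$, so that both $\int_\Omega$-integrals converge absolutely against a fixed $\phi\in\S(\frM_{n,m})$; since both sides of (\ref{eq8}) are meromorphic in $\lv$, as is standard for composite power functions in view of (\ref{gf}), the identity extends to all $\lv\in\bbc^m$ by analytic continuation.
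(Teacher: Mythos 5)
Your argument is correct, and it is in substance the proof the paper points to rather than a new route: the paper does not prove (\ref{eq8}) itself but attributes it (Remark \ref{bnuk}) to Khekalo \cite{Kh1a}, who extended Stein's Gaussian argument \cite{St1} to matrix argument --- i.e., exactly your scheme of writing the composite power function as a Laplace transform over $\Omega$ via (\ref{eq11}), Fourier-transforming the matrix Gaussian, and evaluating the resulting cone integral by a second application of (\ref{eq11}) after the substitutions $s\to s^{-1}$ and $s\to\om s\om$. Your bookkeeping checks out: the convergence region $\{Re\,\lam_j>j-n-1\}\cap\{Re\,\lam_j<j-m\}$ agrees with the set $\frL$ of Remark \ref{bnuk}, the constant $2^{nm+|\lv|}\pi^{nm/2}$ is right, and your final continuation step is in fact slightly stronger than stated, since by Lemma \ref{l3.1} both normalized sides are entire in $\lv$, not merely meromorphic.
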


  \begin{remark} \label {bnuk} {\rm Both sides of  (\ref{fou}) and (\ref{eq8}) are understood in the sense of analytic continuation.
  Integrals in (\ref{eq8}) converge simultaneously when $\lv=(\lam_1, \ldots , \lam_m)$ belongs to
  the set $\frL=\check \frL \cap  \tilde \frL$, where
 \bea &&\check \frL = \{\lv:\; Re\,\lam_j> j-n-1 \quad \mbox{\rm for each $j=1,\dots, m$} \}, \nonumber\\
 && \tilde \frL= \{\lv:\; Re\,\lam_j<j-m \quad \mbox{\rm for each $j=1,\dots, m$} \}.\nonumber\eea
 The diagonal  $\lam_1=\ldots=\lam_m=\lam$ does not belong to $\frL$.
 This explains essential difficulties  when one tries to prove  (\ref{fou}) directly as in \cite {Ru10}. Formula (\ref{eq8}) was established by Khekalo
\cite{Kh1a}, who extended    the  argument from \cite[Chapter III,
Sec. 3.4] {St1} to functions of matrix argument. Khekalo's proof was reproduced in \cite[p. 61]{OR4}.}
\end{remark}

\section{The Higher-Rank Funk Transform}
\setcounter{equation}{0}

\subsection{Definitions and duality}  The classical Funk transform  on the
unit sphere $S^{n-1}\subset \bbr^n$ is defined by
\be \label {7a3v}(Ff) (u)=\int_{\{v\in S^{n-1}: \, u\cdot v=0\}} f(v)\,d_u v,
\qquad u\in S^{n-1};\ee
see, e.g., \cite {GGG, Hel}. We suggest the following  generalization of (\ref{7a3v}), in which $u\in V_{n, k}$ and $v\in V_{n, m}$ are elements of respective  Stiefel manifolds, $ 1\le k,m\le n-1$.
The {\it higher-rank Funk transform } sends a function $f$ on $V_{n, m}$ to a function $F_{m,k} f$ on $V_{n, k}$ by the formula
 \be \label {la3v}(F_{m,k} f) (u)=\int_{\{v\in V_{n, m}: \, u'v=0\}} f(v)\,d_u v,
\qquad u\!\in\! V_{n, k}.\ee
The corresponding dual transform
 \be \label {la3vd}(\fd \vp) (v)=\int_{\{u\in V_{n, k}: \, v'u=0\}} \vp(u)\,d_v u,
\qquad v\!\in\! V_{n, m},\ee
 acts in the opposite direction. The condition $u'v=0$ means that subspaces $u\bbr^k \in \gnk$ and $v\bbr^m \in \gnm$ are mutually orthogonal. Hence,    necessarily,  $k+m\le n$. The case $k=m$, when both $f$ and its Funk transform live on the same manifold, is of particular importance and coincides with (\ref{7a3v}) when $k=m=1$. We denote $F_m= F_{m,m}$.

 To give  the new  transforms precise meaning,  we set $G=O(n)$,
 \bea
K_0&=&\left \{ \tau \in G : \tau = \left[\begin{array} {cc} \gam & 0
\\ 0 & I_{k}
\end{array} \right], \quad \gam  \in
O(n-k) \right \},\\
\label {klop7} \check K_0&=&\left \{ \rho \in G : \rho = \left[\begin{array} {cc}
\del & 0
\\ 0 & I_m \end{array} \right], \quad  \del
\in O(n-m) \right \}, \eea
\be\label {klop8} u_0= \left[\begin{array} {c}  0 \\  I_{k} \end{array} \right],
 \quad \check{u}_0=  \left[\begin{array} {c}  I_k \\  0 \end{array}
\right]; \qquad  v_0= \left[\begin{array} {c}  0 \\  I_{m} \end{array} \right],
 \quad \check{v}_0= \left[\begin{array} {c}  I_m \\  0 \end{array}
\right ],\ee

\[ u_0,  \check{u}_0 \in  \vnk; \qquad  v_0,  \check{v}_0 \in \vnm.\]
Then (\ref{la3v}) and (\ref{la3vd}) can be explicitly written as \be\label{876a}
(F_{m, k}f) (u)=\int_{V_{n-k,m}} \!\!\!\!\!f\left(g_u
\left[\begin{array} {c} \om
\\0
\end{array} \right]\right)\,d_*\om\!=\!\int_{K_0} \!\!f(g_u \t
\check{v}_0)\,d\t,
 \ee
 \be \label{876ab}(\fd\vp) (v)=\int_{V_{n-m,k}} \!\!\!\!\!\vp\left(g_v
\left[\begin{array} {c} \theta
\\0
\end{array} \right]\right)\,d_*\theta\!=\!\int_{\check K_0} \!\!\vp(g_v \rho
\check{u}_0)\,d\rho,
 \ee
where  $ g_u$ and  $ g_v$ are orthogonal transformations satisfying $g_u u_0=u$
 and $g_v v_0=v$, respectively.

\begin{remark}\label {78hy}{\rm Since the measure $d_*\om$ is right $O(m)$-invariant, then, for all $u\in \vnk$,
\be (F_{m, k}f) (u)=(F_{m, k}\tilde f) (u), \qquad \tilde f (v)=\int_{O(m)} f(v\gam)\, d\gam\ee
(similarly for $(\fd\vp) (v)$). Hence, the set of all $f \in L^1(\vnm)$, for which $\tilde f (v)=0$  a.e., is a subset of  $\ker F_{m,k}$; cf. Theorem \ref{j7652}.}
\end{remark}

\begin{lemma} Let $ 1\le k,m\le n-1$; $\,k+m\le n$. Then
\be\label{009a}\int_{V_{n, k}}(F_{m, k}f)(u)\, \vp(u)\,
d_*u=\int_{\vnm}f(v)\,(\fd \vp)(v)\, d_*v\ee provided that at
least one of these integrals is finite when $f$ and $\vp$ are
replaced by $|f|$ and $|\vp|$, respectively.

\end{lemma}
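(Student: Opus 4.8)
The plan is to prove the duality relation (\ref{009a}) directly from the explicit frame-integral representations (\ref{876a}) and (\ref{876ab}), reducing everything to an integration over the orthogonal group $G=O(n)$ and then invoking the invariance of Haar measure. First I would write out the left-hand side using the second form in (\ref{876a}), namely $(F_{m,k}f)(u)=\int_{K_0} f(g_u\t\check v_0)\,d\t$, so that
\[
\int_{V_{n,k}}(F_{m,k}f)(u)\,\vp(u)\,d_*u
=\int_{V_{n,k}}\Big(\int_{K_0} f(g_u\t\check v_0)\,d\t\Big)\,\vp(u)\,d_*u .
\]
The key idea is to lift the integral over $\vnk$ to an integral over $G$: since $\vnk=O(n)/O(n-k)$ and the measure $d_*u$ is the pushforward of normalized Haar measure $dg$ under $g\mapsto gu_0$, and since $\vp(u)$ depends only on $gu_0=u$ while the inner $K_0$-integral is precisely the averaging that makes the integrand well-defined on the quotient, I can replace $\int_{V_{n,k}}(\cdots)\,d_*u$ by $\int_G f(g\check v_0)\,\vp(gu_0)\,dg$, absorbing the $K_0$-average into the Haar integral over $G$ (here one uses that $g_u\t$ ranges over the full coset as $\t$ ranges over $K_0$, and $\check v_0$ is fixed). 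This collapses both the Stiefel integral and the frame integral into a single clean expression on the group.

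Next I would perform the symmetric computation on the right-hand side. Using the second form of (\ref{876ab}), $(\fd\vp)(v)=\int_{\check K_0}\vp(g_v\rho\check u_0)\,d\rho$, and the analogous lifting from $\vnm=O(n)/O(n-m)$ to $G$, the right-hand side of (\ref{009a}) becomes $\int_G f(gv_0)\,\vp(g\check u_0)\,dg$. The heart of the proof is then to identify the two group integrals
\[
\int_G f(g\check v_0)\,\vp(gu_0)\,dg
\qquad\text{and}\qquad
\int_G f(gv_0)\,\vp(g\check u_0)\,dg .
\]
These are matched by a single change of variable $g\mapsto g\om_0$, where $\om_0\in O(n)$ is a fixed orthogonal transformation swapping the two coordinate blocks so that $\om_0 v_0=\check v_0$ and $\om_0\check u_0=u_0$ (concretely, the block permutation sending the bottom $m$-frame position to the top and the top $k$-frame position to the bottom; this is possible precisely because $k+m\le n$, so the relevant coordinate blocks are disjoint). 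By the right-invariance of Haar measure $dg$, the substitution converts one integral into the other, establishing (\ref{009a}).

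The main obstacle I expect is the careful bookkeeping of the block structure: I must choose $\om_0$ so that it simultaneously sends $v_0\mapsto\check v_0$ (mapping the lower $m$-block to the upper $m$-block) and $\check u_0\mapsto u_0$ (mapping the upper $k$-block to the lower $k$-block), and I must verify these two requirements are compatible, which relies on $k+m\le n$ guaranteeing that the $k$-block and the $m$-block occupy disjoint rows. One should also confirm that the fixed rotations $g_u,g_v$ can be chosen measurably (so the lifting to $G$ is legitimate) and that the normalizations of $d_*u$, $d_*\om$, $d\t$, $d\rho$ are mutually consistent, so that no stray constant appears; the stated hypothesis that one of the integrals converges absolutely for $|f|,|\vp|$ licenses the application of Fubini throughout and justifies treating both sides as the common absolutely convergent integral $\int_G |f(g\cdot)\vp(g\cdot)|\,dg$. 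Once $\om_0$ is pinned down and the measures reconciled, the equality is immediate from Haar invariance.
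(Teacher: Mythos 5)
Your proof is correct and follows essentially the same route as the paper's: both arguments lift the two sides to integrals over $O(n)$ and conclude by right-invariance of Haar measure, using exactly the block-permutation matrix you call $\om_0$ (it is the paper's $\z$ from (\ref{78v}), which satisfies $\z v_0=\check v_0$ and $\z'u_0=\check u_0$). The only difference is organizational — you first collapse each side to a clean group integral via the stabilizer subgroups $K_0$, $\check K_0$ and then make one substitution, whereas the paper performs the substitution $g\mapsto g\tau\z\rho'$ with both subgroup integrals still in place — so the content is the same.
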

\begin{proof}

We write the left-hand side as
 \[
\int_G  (F_{m, k}f)(g u_0)\vp(g u_0)\, dg=\int_G \vp(g u_0)\,
dg\int_{K_0} \!\!f(g \t \check{v}_0)\,d\t.\] Let $\varkappa \in
O(n)$ be such that $u_0=\varkappa \check{u}_0$ and
 denote
\be\label {78v}\z=\left[\begin{array} {ccc}  0 & 0 & I_m\\
 0 & I_{n-m-k} & 0 \\
I_k & 0 & 0\end{array} \right]. \ee Keeping in mind that $\z v_0
=\check{v}_0$, $\z' u_0 =\check{u}_0$, and $\check{K}_0$ is a
stabilizer of $v_0$, we continue: \bea l.h.s&=&\int_G \vp (g
\varkappa \check{u}_0)\, dg \int_{\check K_0}\, d\rho \int_{K_0} f
(g \tau \z\rho' v_0)\,
 d\tau \nonumber\\
&=&\int_G f (\lam v_0) \,d \lam \int_{\check K_0} d\rho
\int_{K_0}\vp (\lam\rho \z'\t'\varkappa  \check{u}_0)
\, d\t\nonumber\\
&{}& \mbox {\rm (note that $\z'\t'\varkappa  \check{u}_0=\z'\t'u_0=\z' u_0=\check{u}_0$)}\nonumber\\
&=&\int_G f (\lam v_0) \,d \lam \int_{\check K_0} \vp (\lam
\rho\check{u}_0)\,
 d\rho\nonumber\\
&=&\int_{\vnm} f(v)\,(\fd \vp)(v)\, d_*v,\nonumber \eea as
desired.
\end{proof}

\begin{corollary}\label {laas} If $f \in L^1 (\vnm)$, then the Funk transform $(F_{m,
k}f)(u)$ exists as an absolutely convergent integral for almost all
$u\in \vnk$. Moreover,\[\int_{V_{n, k}}(F_{m, k}f)(u)\,
d_*u=\int_{\vnm}f(v)\, d_*v.\]
\end{corollary}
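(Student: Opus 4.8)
The plan is to derive both assertions directly from the duality relation (\ref{009a}) by specializing the test function $\vp$ to the constant function $1$ on $\vnk$. The key preliminary observation is that the dual Funk transform averages against a probability measure, so $(\fd 1)(v)=1$ for every $v\in\vnm$; this is immediate from either representation in (\ref{876ab}), since both $d_*\theta$ and $d\rho$ have total mass $1$.

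First I would establish the almost-everywhere absolute convergence. Applying (\ref{009a}) with $f$ replaced by the nonnegative function $|f|$ and with $\vp\equiv 1$, the integrability hypothesis of the preceding Lemma is met, because its right-hand side becomes $\int_{\vnm}|f(v)|\,(\fd 1)(v)\,d_*v=\int_{\vnm}|f(v)|\,d_*v$, which is finite since $f\in L^1(\vnm)$. The Lemma then gives
\[\int_{V_{n,k}}(F_{m, k}|f|)(u)\, d_*u=\int_{\vnm}|f(v)|\, d_*v<\infty.\]
Because $(F_{m, k}|f|)(u)=\int_{\{v\in\vnm:\,u'v=0\}}|f(v)|\,d_u v$ is the integral of a nonnegative function whose total integral over $\vnk$ is finite, it must be finite for almost every $u$. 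Hence the defining integral (\ref{la3v}) for $(F_{m, k}f)(u)$ converges absolutely for almost all $u\in\vnk$.

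For the remaining identity I would invoke (\ref{009a}) once more, this time applied to $f$ itself and again with $\vp\equiv 1$. Its admissibility hypothesis is exactly the finiteness just verified, so the relation holds and reduces to
\[\int_{V_{n,k}}(F_{m, k}f)(u)\, d_*u=\int_{\vnm}f(v)\,(\fd 1)(v)\, d_*v=\int_{\vnm}f(v)\, d_*v,\]
as asserted. I do not expect a genuine obstacle here: the substantive work is packaged in the Lemma, and the only point requiring attention is the check that its integrability hypothesis holds for the constant test function---which is precisely the assumption $f\in L^1(\vnm)$---together with the elementary normalization $(\fd 1)\equiv 1$.
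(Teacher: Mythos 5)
Your proposal is correct and is essentially the argument the paper intends: the corollary is stated without proof precisely because it follows at once from the duality relation (\ref{009a}) upon taking $\vp\equiv 1$ and observing that $\fd 1\equiv 1$, first with $|f|$ to verify the admissibility hypothesis and the almost-everywhere absolute convergence, then with $f$ itself to obtain the identity. No gaps.
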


A similar statement holds for the dual transform $\fd \vp$.

\begin{example} \label {iikd}  Let $\vp (u)= |v_0' uu'v_0|^{(\a-k)/2}$,
$v_0= \left[\begin{array} {c}  0 \\  I_{m} \end{array} \right]$, $\;u\in V_{n, k}$.
If $1\le m\le k\le n-m, \; Re \,\a>m-1$, then
 \be \label {kja} (\fd\vp) (v)= c_\a \, |I_m-v_0'
vv'v_0|^{(\a-k)/2},\qquad v \in \vnm,\ee
\[
c_\a=\frac{\Gam_m((n-m)/2)}{\Gam_m(k/2)}\,\frac{\Gam_m(\a/2)}{\Gam_m((\a+n-m-k)/2)}.\]
\end{example}
\begin{proof} The condition $m \le k$ has a simple explanation: if $k<m$ then $|v_0' uu'v_0|\equiv 0$.
Let us prove (\ref{kja}). By (\ref{876ab}),
\[(\fd\vp) (v)=\int_{V_{n-m,k}} |z_\theta
z'_\theta|^{(\a-k)/2}\,d_*\theta, \qquad z_\theta= v'_0 g_v \left[\begin{array} {c} \theta
\\0
\end{array} \right].\]
We write
\be\label {nio} g'_v v_0=\left[ \begin{array} {c} A \\
 B \end{array} \right], \qquad A \in \frM_{n-m,m}, \quad  B \in
 \frM_{m,m},
\ee
 so that $z_\theta=A' \theta$. Then we represent $A$ in polar coordinates
 $A=wS^{1/2}$, $S=A'A\in \p, \; w\in V_{n-m,m}$. This gives
\[(\fd\vp)(v)=\int_{V_{n-m,k}}
|A'\theta\theta'A|^{(\a-k)/2}\,d_*\theta=c_\a\, |S|^{(\a-k)/2},\] where
\[
c_\a=\int_{V_{n-m,k}}
|w'\theta\theta'w|^{(\a-k)/2}\,d_*\theta\] can be computed using formula (\ref{mnv}) (with $n$ replaced by $n-m$ and $\lam$ by $\a-k$).
Let $\varkappa=\left[ \begin{array} {c} I_{n-m} \\
 0 \end{array} \right]\in V_{n,n-m}$. Then, by (\ref{nio}), $A=\varkappa' g'_v v_0$
 and \[ S=A'A= v'_0 g_v \varkappa \varkappa' g'_v v_0=v'_0 g_v (I_n -v_0 v'_0) g'_v v_0=I_m-v'_0 vv' v_0.\]
This gives the result.
\end{proof}

Example \ref{iikd} and duality (\ref{009a}) yield
\[ \int_{V_{n, k}}\!\!\!(F_{m, k}f)(u)\, |v'_0
uu'v_0|^{(\a-k)/2} \, d_*u\!=\! c_\a\,
\int_{\vnm}\!\!\!\!f(v)\,|I_m\!-\!v'_0
vv'v_0|^{(\a-k)/2}\,
d_*v.\]
 Owing to
$O(n)$-invariance, one can replace $v_0$ in this formula by
 an arbitrary $w \in V_{n, m}$. This gives the following statement.
\begin{lemma} \label{pmz}Let $w \in V_{n,
m}$, $ Re \,\a>m-1$,  \be \label {olpz}1\le m\le k\le n-m.\ee
  Then \be \int_{V_{n, k}}\!\!\!(F_{m, k}f)(u)\, |w'
uu'w|^{(\a-k)/2} \, d_*u\!=\! c_\a\, \int_{\vnm}\!\!\!f(v)\,|I_m\!-\!w'
vv'w|^{(\a-k)/2}\, d_*v,\nonumber\ee
 provided that  the integral on the right-hand side is absolutely convergent.
\end{lemma}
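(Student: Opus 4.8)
The plan is to obtain the identity for an arbitrary frame $w$ from the special case $w=v_0$, which is exactly the display written out immediately before the lemma. That base case itself comes from feeding the test function $\vp(u)=|v_0'uu'v_0|^{(\a-k)/2}$ into the duality relation (\ref{009a}) and replacing $\fd\vp$ by its value $c_\a\,|I_m-v_0'vv'v_0|^{(\a-k)/2}$ from Example \ref{iikd}; the hypotheses $Re\,\a>m-1$ and $1\le m\le k\le n-m$ are precisely what Example \ref{iikd} requires. So for the lemma proper I would take the $v_0$-identity as given and carry out an $O(n)$-equivariance argument to move $v_0$ to $w$.

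First I would record the covariance of the Funk transform. For $\gam\in O(n)$, formula (\ref{876a}) is valid for \emph{any} orthogonal $g_u$ with $g_uu_0=u$, so I may evaluate it at $\gam u$ using the admissible choice $g_{\gam u}=\gam g_u$; this gives at once
\be\label{covpmz}(F_{m,k}f)(\gam u)=(F_{m,k}f_\gam)(u),\qquad f_\gam(v):=f(\gam v),\ee
for every $u\in\vnk$. Now fix $w\in\vnm$ and pick $\gam\in O(n)$ with $w=\gam v_0$, which exists since $\vnm=O(n)/O(n-m)$ is a single orbit; note $w'\gam=v_0'$. In the left-hand integral I would replace $u$ by $\gam u$ and use the $O(n)$-invariance of $d_*u$ on $\vnk$: by (\ref{covpmz}) and $w'\gam u\,u'\gam'w=v_0'uu'v_0$ the left-hand side becomes $\int_{\vnk}(F_{m,k}f_\gam)(u)\,|v_0'uu'v_0|^{(\a-k)/2}\,d_*u$. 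Applying the established $v_0$-identity to $f_\gam$ converts this into $c_\a\int_{\vnm}f_\gam(v)\,|I_m-v_0'vv'v_0|^{(\a-k)/2}\,d_*v$. Finally the substitution $v\mapsto\gam'v$, again preserving $d_*v$, sends $f_\gam(v)=f(\gam v)$ to $f(v)$ and $v_0'vv'v_0$ to $w'vv'w$, producing exactly the right-hand side; the constant $c_\a$ is independent of $w$ and so is untouched by both rotations.

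The algebra here is routine, so the only point genuinely requiring care is the bookkeeping of absolute convergence. Because $c_\a$ and the power function are complex for non-real $\a$, every integrability statement must be read with the integrand in absolute value, i.e.\ with exponent $(Re\,\a-k)/2$. The substitution $v\mapsto\gam'v$ carries $|w'vv'w|$ and $|I_m-w'vv'w|$ to $|v_0'vv'v_0|$ and $|I_m-v_0'vv'v_0|$, so the assumed absolute convergence of the right-hand side for $w$ is equivalent to that for $v_0$; this is what lets me invoke (\ref{009a}) (which needs finiteness of one side with $f,\vp$ replaced by $|f|,|\vp|$) when establishing the base case. The hard part, such as it is, is simply to keep the rotation applied simultaneously to the measure and to the power function at each step so that the convergence hypothesis of (\ref{009a}) is verified throughout; no new estimate beyond the one assumed in the statement is needed.
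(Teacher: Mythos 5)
Your argument is correct and is essentially the paper's own proof: the paper likewise derives the $v_0$-identity from Example \ref{iikd} together with the duality (\ref{009a}) and then simply asserts that ``owing to $O(n)$-invariance, one can replace $v_0$ by an arbitrary $w$.'' You have merely written out the equivariance and convergence bookkeeping that the paper leaves implicit, and your details check out.
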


\subsection{Connection with Radon transforms on Grassmannians}
There is an intimate connection between the higher-rank Funk transform and the well-known
Radon transforms for a pair of Grassmann manifolds by inclusion. The latter were studied by several authors,
who used different methods; see, e.g., \cite{GGR, GGS, Gri, GR, Ka, P1, Zh1, Zh2}.

 Suppose that $f(v)$ and $\vp (u)$ are right $O(m)$-invariant  and $O(k)$-invariant functions on $\vnm$ and $\vnk$, respectively.
We define the corresponding functions on Grassmannians by setting
\be \tilde f(\xi)= \stackrel{*}{f}(\t)=f(v), \qquad\xi\!=\!\{v\}\!\in \!G_{n,m}, \quad \t\!=\!\{v\}^\perp\!\in \!G_{n,n-m},\ee
\be \tilde \vp(\z)= \stackrel{*}{\vp}(\eta)=\vp (u), \qquad\z\!=\!\{u\}\!\in \!G_{n,k}, \quad \eta\!=\!\{u\}^\perp\!\in\! G_{n,n-k},\ee
 and consider the Radon transforms
\be\label {i43} (R_{m,n-k} \tilde f)(\eta)\!=\!\intl_{\xi
\subset \eta} \!\!\tilde f(\xi)\, d_\eta \xi, \quad (\rdk
\stackrel{*}{\vp})(\xi)\!=\!\intl_{\eta \supset \xi
} \!\!\stackrel{*}{\vp}(\eta)\, d_\xi \eta;\ee
\be\label {i43q} (R_{k,n-m} \tilde \vp)(\t)\!=\!\intl_{\z
\subset \t} \!\! \tilde \vp(\z)\, d_\t \z, \quad  (\stackrel{*}{R}_{k,n-m}\stackrel{*}{f})(\z)\!=\!\intl_{\t \supset \z
} \!\! \stackrel{*}{f}(\t)\, d_\z \t.\ee
Here $\,d_\eta \xi, \;d_\xi \eta, \;d_\t \z, \;d_\z \t$ denote the relevant probability measures. Then (\ref{876a}) and (\ref{876ab}) imply
\be (F_{m, k}f)(u)\!=\!(R_{m,n-k} \tilde f)(\eta)=(\stackrel{*}{R}_{k,n-m}\stackrel{*}{f})(\z),\ee
\be (\fd \vp)(v)= (R_{k,n-m} \tilde \vp)(\t)=(\rdk
\stackrel{*}{\vp})(\xi).\ee
These equalities hold under the standard assumption
$$ \dim G_{n,m}\le \dim G_{n,n-k}\quad \mbox{\rm or}\quad \dim G_{n,k}\le \dim G_{n,n-m},$$ according to which it is usually assumed $1\le m\le k\le n-m$.

\section{Cosine  and Sine Transforms. Composition formulas}

 Lemma \ref{pmz} suggests
to introduce  the following integral operators: \bea
\label{0mby}(\C^{\a}_{m, k} f)(u)&=&\int_{\vnm} \!\!\!f(v)\,
|v'uu'v|^{(\a-k)/2} \, d_*v,\\ \label{0mbyd}(\cd0
\vp)(v)&=&\int_{\vnk} \!\!\vp(u)\, |v'uu'v|^{(\a-k)/2} \, d_*u,\\
\label{c0mby}(\S^{\a}_{m, k} f)(u)&=&\int_{\vnm} \!\!\!f(v)\, |I_m
-v'uu'v|^{(\a+k-n)/2} \, d_*v,\\
 \label{c0mbyd}(\sd0 \vp)(v)&=&\int_{\vnk} \!\!\vp(u)\,|I_m
-v'uu'v|^{(\a+k-n)/2}  \, d_*u.\eea \[ u \!\in\! V_{n,k}, \qquad v
\!\in\! \vnm, \qquad 1\le m, k\le n-1.\] We call $\C^{\a}_{m, k} f$ and $\S^{\a}_{m, k} f$ the {\it
cosine transform} and the {\it sine transform} of $f$, respectively.
 Integrals $\cd0 \vp$ and $\sd0 \vp$ are called the {\it dual cosine
transform} and the {\it dual sine transform}. The terminology stems from
the fact that, in the case $k=m=1$, when $u$ and $v$ are unit vectors, $$|v'uu'v|=(u\cdot v)^2=\cos^2 \om, \quad |I_m-v'uu'v|=1-(u\cdot v)^2=\sin^2 \om,$$
 where $\om$ is the angle between $u$ and $v$; see also \cite {A, AB, GR, OR3, OR4, Zh2}, regarding  higher-rank analogues of the cosine transform in the language of Grassmannians.

\begin{remark}\label {lp00} {\rm When dealing with operators (\ref{0mby}) and (\ref{0mbyd}), we restrict our consideration to the case $m\le k$, because, if $m>k$, then $|v'uu'v|=0$ for all $v\in \vnm$ and all $u\in\vnk$. Similarly,  for (\ref{c0mby}) and (\ref{c0mbyd}), we assume  $m\le n-k$, because, if $m>n-k$, then
\[|I_m -v'uu'v|=|I_m
-v'\Pr_{\{u\}}v|=|v' \Pr_{\{u\}^\perp } v|=|v' \tilde u \tilde u' v|=0
\]
(here $\tilde u $ is an arbitrary $(n-k)$-frame  orthogonal to $ \{u\}$). The case $k=n$, when $ v'uu'v\equiv I_m$, is also not interesting.
Clearly, \be\label{robp}
(\S^{\a}_{m, k} f)(u)\!=\!(\C^{\a}_{m, n-k} f)(\tilde u)\!=\!\int_{\vnm} \!\!\!f(v)\, |v'\tilde u\tilde
 u'v|^{(\a-(n-k))/2} \, d_*v.\ee
 }\end{remark}

The case $k=m$, when $\C^{\a}_{m, k}$ and $\S^{\a}_{m, k}$  coincide with
their duals, are of particular importance. In this case we denote \bea\qquad
\label{nkmt}(M^{\a} f)(u)&=&\int_{\vnm} \!\!\!f(v)\,
|v'uu'v|^{(\a-m)/2} \, d_*v,\quad 1\!\le\! m\! \le\! n\!-\!1,\\
 \label{0mbyr}(Q^\a f)(u)&=&\int_{\vnm}\!\!\!f(v)\,|I_m\!-\!v'
uu' v|^{(\a+m-n)/2}\, d_*v,\quad 2m\!\le \!n,\eea where $u \in V_{n,m}$.  As we shall see below, $Q^\a$  serves (after
suitable normalization) as a substitute for the Riesz potential
operator in the framework of the corresponding Radon theory.

  The following statement gives precise information about convergence of integrals (\ref{0mby})-(\ref{0mbyr}).
   \begin{theorem}\label {exi}  Let  $f \!\in \!L^1(\vnm)$, $\vp \!\in\!
L^1(V_{n,k})$, $1\le m, k  \le n-1$.

\noindent {\rm (i)} Integrals (\ref{0mby})-(\ref{c0mbyd}) converge absolutely almost everywhere
 if and only if $Re\,\a>m-1$.

 \noindent {\rm (ii)} If $1\le m\le k\le n-1$, then
 \bea \label {lop1}\int_{V_{n,k}}(\C^{\a}_{m, k}f)(u)\,
d_*u&=&c_1\,\int_{V_{n,m}}
f(v)\, d_*v,\\
 \label {lop2}\int_{V_{n,m}}(\cd0 \vp)(v)\, d_*v&=&c_1\,\int_{V_{n,k}} \vp(u)\,
d_*u,\eea
\[c_1=\frac{\Gam_{m} (n/2)\, \Gam_{m} (\a/2)}
{\Gam_{m} (k/2)\,
\Gam_{m}((\a+n-k)/2)}.\]

\noindent {\rm (iii)} If $1\le m\le n-k$, then
\bea \label {lop3}\int_{V_{n,k}}(\S^{\a}_{m, k} f)(u)\,
d_*u&=&c_2\,\int_{V_{n,m}} f(v)\, d_*v, \\
 \label {lop4}\int_{V_{n,m}}(\sd0 \vp)(v)\, d_*v&=&c_2\,\int_{V_{n,k}} \vp(u)\,
d_*u,\eea  \[
c_2=\frac{\Gam_{m} (n/2)\, \Gam_{m} (\a/2)} {\Gam_{m} ((n-k)/2)\,
\Gam_{m}((\a+k)/2)}.\]
 \end{theorem}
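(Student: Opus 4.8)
The plan is to reduce the whole theorem to a single scalar integral together with one application of Tonelli's theorem. For a fixed $v\in\vnm$ put
\[
\Phi(\a)=\int_{\vnk}|v'uu'v|^{(\a-k)/2}\,d_*u .
\]
By the $O(n)$-invariance of the measure, $\Phi(\a)$ is independent of $v$, and by Lemma \ref{hbi1}, applied to the function $M\mapsto|MM'|^{(\a-k)/2}$ of the $m\times k$ matrix $M=v'u$, it coincides with $\int_{\vnm}|v'uu'v|^{(\a-k)/2}\,d_*v$. Hence one and the same constant governs the cosine transform and its dual, in either variable. Granting that $\Phi(\a)=c_1$ and that the integral converges exactly for $Re\,\a>m-1$, parts (ii) and (iii) drop out at once: by nonnegativity (real $\a$) and Tonelli,
\[
\int_{\vnk}(\C^{\a}_{m,k}f)(u)\,d_*u=\int_{\vnm}f(v)\,\Big[\int_{\vnk}|v'uu'v|^{(\a-k)/2}\,d_*u\Big]\,d_*v=c_1\int_{\vnm}f(v)\,d_*v,
\]
which is (\ref{lop1}); (\ref{lop2}) is identical after interchanging $\vnm$ and $\vnk$ and using the second form of $\Phi$. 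The sine identities (\ref{lop3})--(\ref{lop4}) then follow from the reflection formula (\ref{robp}), $(\S^{\a}_{m,k}f)(u)=(\C^{\a}_{m,n-k}f)(\tilde u)$: substituting $k\mapsto n-k$ in $c_1$ produces precisely $c_2$.

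Everything thus rests on evaluating $\Phi(\a)$. I would place $v$ in the standard position $v=[\,I_m;\,0\,]$, so that $v'u$ is the top $m\times k$ block $A$ of $u$ and $|v'uu'v|=\det(AA')$. When $k+m\le n$ the bi-Stiefel decomposition (Lemma \ref{l2.1}) peels this block off with density $\det(I_m-AA')^{(n-m-k-1)/2}$, the polar decomposition (Lemma \ref{l2.3}) $A=\theta r^{1/2}$ with $r=AA'$ introduces a Jacobian $|r|^{(k-m-1)/2}$, and the identity $\det(I_k-A'A)=\det(I_m-AA')$ collapses the whole thing to the Siegel beta integral (\ref{2.6d}),
\[
\int_0^{I_m}|r|^{(\a-m-1)/2}\,|I_m-r|^{(n-m-k-1)/2}\,dr=B_m(\a/2,\,(n-k)/2).
\]
A rearrangement of Siegel gamma factors, equivalent to the identity $\Gam_m(n/2)/\Gam_m((n-k)/2)=\Gam_k(n/2)/\Gam_k((n-m)/2)$, turns the prefactor into the stated $c_1$. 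Convergence is controlled solely by the endpoint $r\to0$, i.e.\ by the locus where $v'u$ drops rank, and the beta integral there demands exactly $Re\,\a>m-1$; this is the ``if'' half of (i), while for $f\equiv1$ the transform equals $\Phi(\a)$, which diverges once $Re\,\a\le m-1$, giving the ``only if'' half.

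The step I expect to be the real obstacle is this evaluation in the full range $1\le m\le k\le n-1$, because as soon as $k+m>n$ the block $A$ cannot be extracted: the complementary manifold $V_{n-m,k}$ is empty and Lemma \ref{l2.1} is inapplicable. To get around this I would pass to the orthogonal complement $\tilde u\in V_{n,n-k}$ of $u$ and use $uu'=I_n-\tilde u\tilde u'$ to write
\[
|v'uu'v|=\det(I_m-v'\tilde u\tilde u'v)=\det(I_{n-k}-\tilde u'vv'\tilde u),
\]
the last step again by $\det(I-XX')=\det(I-X'X)$. The top $m$ rows of $\tilde u\in V_{n,n-k}$ \emph{can} be split off, since $m+(n-k)\le n$ is just $m\le k$, and the same bi-Stiefel/polar/beta computation now lands on $B_{n-k}(m/2,\,(\a+n-k-m)/2)$, which again converges precisely for $Re\,\a>m-1$ and re-evaluates to the same $c_1$. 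This is exactly the content of the auxiliary formula (\ref{mnv}), which I would either invoke directly or reprove in this way.

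Finally I would dispatch the routine bookkeeping. The interchange of integrations in (ii)--(iii) is legitimate by Tonelli for nonnegative $\a$ and, for complex $\a$ with $Re\,\a>m-1$, by the absolute convergence already established in (i); the dual transforms $\cd0$ and $\sd0$ require no new work, being the same computation read in the opposite variable via the two equal forms of $\Phi$. The standing geometric restrictions of Remark \ref{lp00} ($m\le k$ for the cosine kernels, $m\le n-k$ for the sine kernels) are precisely what keep the relevant determinant from vanishing identically, so that $\Phi(\a)$ is strictly positive and the constants $c_1,c_2$ are finite and nonzero. The only genuinely delicate ingredient is the matrix beta evaluation together with its gamma-factor bookkeeping; all the rest is invariance and Fubini.
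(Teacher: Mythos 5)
Your argument is correct and shares the outer skeleton of the paper's proof --- Lemma \ref{hbi1} to identify the two forms of the inner integral, Tonelli/Fubini for (ii), and the reflection $k\mapsto n-k$ via (\ref{robp}) for (iii) --- but you evaluate the key constant by a genuinely different route. The paper simply quotes formula (\ref{mnv}) (Corollary \ref{le}), whose proof (Lemma \ref{exl} in the Appendix) computes the auxiliary Gaussian integral $\int_{\Ma}(x'u_0u_0'x)^{\lv}e^{-\tr(x'x)}\,dx$ in two ways --- once via the block splitting $x=\left[\begin{smallmatrix}a\\ b\end{smallmatrix}\right]$ plus polar coordinates, once via the triangular decomposition $x=vt$ of Lemma \ref{sph} --- and divides; that trick is uniform in $1\le m\le k\le n$ (no case distinction at $k+m>n$) and delivers the stronger composite-power version $\Gam_\Om(\lv+\kv)/\Gam_\Om(\lv+\nv)$ that the paper reuses in Lemma \ref{rexi}. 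Your route --- bi-Stiefel decomposition plus the Siegel beta integral (\ref{2.6d}), then passage to the orthogonal complement $\tilde u$ with a telescoping gamma identity to cover $k+m>n$ --- is more elementary and makes the threshold $Re\,\a>m-1$ visible as the beta-integral condition at $r\to 0$, which is exactly where the paper's terse ``(i) follows from the proof'' gets cashed out; you were right that $k+m>n$ is the one genuine obstacle, and your complementation argument does close it (the two gamma quotients agree by the standard telescoping of $\Gam_p$ products), at the price of a case split the Gaussian method avoids. One small bookkeeping point: in your first computation it is cleaner to integrate over $v\in\vnm$ with $u$ frozen, splitting the top $k\times m$ block of $v$, which is literally Lemma \ref{l2.1}(ii); Lemma \ref{hbi1} lets you choose whichever variable makes the hypotheses $k\ge m$, $k+m\le n$ line up, and your phrasing in terms of the top block of $u$ obscures this.
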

 \begin{proof} Equalities (\ref{lop1}) and (\ref{lop2}) hold by Fubini's theorem, owing to Lemma \ref{hbi1} and (\ref{mnv}).   The proof of (\ref{lop3}) and (\ref{lop4})
is similar. It suffices to  note that
\[\int_{\vnk}
 |I_m-v'uu'v|^{(\a+k-n)/2} \, d_*u=\int_{\vnm}
 |v'\tilde u\tilde u'v|^{(\a+k-n)/2} \, d_*v\]
 for any $\tilde u \in V_{n,n-k}$, so that (\ref{mnv}) is  applicable. The validity of (i) follows from the proof of (\ref{lop1})-(\ref{lop4}).
\end{proof}

Functions $(\C^{\a}_{m, k} f)(u)$ and $(\S^{\a}_{m, k} f)(u)$ are right
$O(k)$-invariant. Similarly, $(\cd0 \vp)(v)$ and $(\sd0 \vp)(v)$ are
right $O(m)$-invariant. Hence, transformations
(\ref{0mby})-(\ref{c0mbyd}) actually take functions on the Stiefel
manifolds to functions on the corresponding Grassmann manifolds. If $f$
and $\vp$ are right-invariant in the suitable sense, our operators actually act from one Grassmannian to another.

Below we derive a series of formulas connecting
  cosine, sine,  and Funk transforms. Similar formulas  and their applications in the  case $m=1$ can be found in \cite{Ru4, Ru7}.

\begin{theorem}\label {xcty} Let $f\in L^1(\vnm), \; 1\le m\le k\le n-m$. \\If $Re \, \a\!> \!m\!-\!1$,
then \be \label {gty} \cd0 F_{m,k}f=\fd \C^\a_{m,k} f=c_\a\, Q^{\a+n-k-m} f,\ee
\[
c_\a=\frac{\Gam_m((n-m)/2)\, \Gam_m(\a/2)}{\Gam_m(k/2)\, \Gam_m((\a+n-m-k)/2)}.\]
 If $Re \, \a> k-1$, then
 \be \label {gty7}\cd0 F_{m,k}f=\fd \C^\a_{m,k} f=\tilde c_\a\, M^{\a+m-k}
F_{m}f,\ee
\[ \tilde c_\a=\frac{\Gam_{m} (m/2)\, \Gam_{m} (\a/2)}{\Gam_{m}
(k/2)\, \Gam_{m} ((\a+m-k)/2)}.\]
\end{theorem}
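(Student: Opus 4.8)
The plan is to establish the composition formulas in Theorem \ref{xcty} by exploiting the duality relation (\ref{009a}) together with the explicit computation already carried out in Example \ref{iikd}. First I would prove the leftmost equality $\cd0 F_{m,k}f = \fd \C^\a_{m,k} f$. By the definition (\ref{0mbyd}) of the dual cosine transform, $(\cd0 F_{m,k}f)(v)$ pairs $F_{m,k}f$ against the kernel $|v'uu'v|^{(\a-k)/2}$ over $\vnk$, while $(\fd \C^\a_{m,k}f)(v)$ integrates the cosine transform of $f$ over the orthogonal frames. Applying (\ref{009a}) with $\vp(u)=|v'uu'v|^{(\a-k)/2}$ for fixed $v$ should interchange the roles: the Funk transform on one side becomes the dual Funk transform on the other, and the cosine-kernel pairing is preserved. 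I expect both sides to reduce to a single triple integral of $f(w)$ against $|v'uu'v|^{(\a-k)/2}$ over the incidence set $\{u\perp w\}$, so the equality follows once the order of integration is justified by Fubini, which is legitimate for $Re\,\a>m-1$ by the convergence part of Theorem \ref{exi}.

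The heart of the matter is the identification with $Q^{\a+n-k-m}f$. Here I would write out $(\fd \C^\a_{m,k}f)(v)$ using the frame description (\ref{876ab}) of $\fd$, so that the inner variable $u$ ranges over frames orthogonal to $v$, parametrized as $u=g_v\left[\begin{smallmatrix}\theta\\0\end{smallmatrix}\right]$ with $\theta\in V_{n-m,k}$. The composed kernel then involves $|w'uu'w|^{(\a-k)/2}$ integrated first over such $u$. The decisive step is to evaluate the inner integral over $\theta\in V_{n-m,k}$, and this is exactly the computation performed in the proof of Example \ref{iikd}: that computation shows such an integral produces a power of a $\det$ of the relevant Gram matrix, namely $|I_m - w'vv'w|$ raised to the appropriate exponent, times the constant $c_\a$ with the stated gamma-factor ratio. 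Matching $|I_m-w'vv'w|^{(\a+n-k-m-m)/2}$ against the definition (\ref{0mbyr}) of $Q^{\a+n-k-m}$ pins down the exponent $(\a+n-k-m+m-n)/2=(\a-k)/2$ and yields (\ref{gty}).

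For the second formula (\ref{gty7}) the strategy is analogous but the inner integration is organized to produce a cosine kernel rather than a sine kernel. Rather than integrating the $\theta$-variable all the way down, I would apply the bi-Stiefel decomposition, Lemma \ref{l2.1}(ii), or equivalently Lemma \ref{25gr}, to split the frame integral over $V_{k,m}$-type directions from the orthogonal complement, so that part of the integration reconstitutes the Funk transform $F_m f$ on the same Stiefel manifold and the remaining kernel collapses to $|v'uu'v|^{(\a+m-k-m)/2}$, i.e. the $M^{\a+m-k}$-kernel. The condition $Re\,\a>k-1$ is precisely what makes this second iterated integral absolutely convergent, via the beta-integral (\ref{2.6d}) over the cone. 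The main obstacle I anticipate is bookkeeping the gamma-function constants: both $c_\a$ and $\tilde c_\a$ must emerge with exactly the stated ratios, and this requires careful tracking of the Siegel beta integrals (\ref{2.6d}) and the normalizations $\sig_{n,m}$ from (\ref{2.16}) through each change of variables. I would verify the constants by testing the formulas on $f\equiv 1$ and cross-checking against the total-mass identities (\ref{lop1})--(\ref{lop4}) of Theorem \ref{exi}, which fixes any ambiguity in the overall normalization.
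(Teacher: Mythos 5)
Your treatment of the first chain (\ref{gty}) is essentially the paper's: the identity $\cd0 F_{m,k}f=c_\a\, Q^{\a+n-k-m}f$ is exactly Lemma \ref{pmz}, which the paper obtains precisely from Example \ref{iikd} plus the duality (\ref{009a}), as you propose. Two points of divergence are worth flagging. First, for the middle equality $\cd0 F_{m,k}f=\fd \C^\a_{m,k}f$ your claim that both sides ``reduce to a single triple integral of $f(w)$ against $|v'uu'v|^{(\a-k)/2}$ over $\{u\perp w\}$'' is not literally correct: after applying (\ref{009a}) the left-hand side becomes $\int f(w)\,d_*w\int_{\{u\perp w\}}|v'uu'v|^{(\a-k)/2}\,d_wu$, whereas $\fd \C^\a_{m,k}f$ is $\int_{\{u\perp v\}}d_vu\int f(w)\,|w'uu'w|^{(\a-k)/2}\,d_*w$ --- a different kernel and a different incidence constraint. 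The two agree only after each inner integral is evaluated separately by Example \ref{iikd} (using $|I_m-v'ww'v|=|I_m-w'vv'w|$), or, as the paper does, by pairing with a test function $\om$ and routing both sides through the already-established identity with $Q^{\a+n-k-m}$. This is fixable but must be said. Second, for (\ref{gty7}) the paper performs no new integration at all: it specializes (\ref{gty}) to $k=m$, giving $M^\a F_mf$ as an explicit constant times $Q^{\a+n-2m}f$, replaces $\a$ by $\a-k+m$, and compares with (\ref{gty}); the constant $\tilde c_\a$ falls out as a ratio of two instances of $c_\a$, and the hypothesis $Re\,\a>k-1$ is exactly what keeps the shifted parameter in the convergence range $Re\,(\a+m-k)>m-1$. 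Your proposed direct attack via the bi-Stiefel decomposition or Lemma \ref{25gr} would presumably succeed, but it is considerably heavier, and you leave the hard part --- producing the stated gamma-factor ratio --- to a normalization check on $f\equiv 1$, which pins down the constant only once proportionality of the two operators has already been proved. The substitution trick is the efficient route here.
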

\begin{proof} The equality $\cd0 F_{m,k}f=c_\a\, Q^{\a+n-k-m} f$ in
(\ref{gty}) mimics Lemma \ref{pmz}. The equality $\cd0 F_{m,k}f=\fd \C^\a_{m,k} f$ holds by duality:
 \bea (\fd \C^\a_{m,k} f, \om)&=&(f,\cd0 F_{m,k}\om)=c_\a\, (f, Q^{\a+n-k-m}\om)\nonumber\\
 &=&c_\a\, (Q^{\a+n-k-m}f,  \om), \qquad \om \in C^\infty (\vnm).\nonumber\eea
 Note that by Theorem \ref{exi}, $Q^{\a+n-k-m} f\in L^1 (\vnm)$, because the conditions $ k\le n-m$ and $Re \, \a\!> \!m\!-\!1$ imply  $Re \, \a\!+\!n\!-\!k\!-\!m\!> \!m\!-\!1$.

 To obtain  (\ref{gty7}),
we first write (\ref{gty}) with $k=m$.  This gives
 \be\label {gty9} M^\a
F_{m}f\!=\!\frac{\Gam_m((n-m)/2)\, \Gam_m(\a/2)}{\Gam_m(m/2)\,
\Gam_m((\a+n-2m)/2)}\, Q^{\a+n-2m}f.\ee Then we replace $\a$ by
$\a-k+m$ in (\ref{gty9}) to get $Q^{\a+n-k-m} f$ on the right-hand
side and compare the result with (\ref{gty}).
\end{proof}

\begin{theorem} Let  $f\in L^1(\vnm),\;
 Re \, \a>m-1$.
If $k\le n-m$, $u\in V_{n,k}$, then
\be \label {782}(F_{m, k}M^\a f)(u)= c_{n,k,m}(\a)\,
(\S_{m, k}^{\a+n-k-m} f)(u),\ee
\[c_{n,k,m}(\a)=\frac{\Gam_{m} ((n-k)/2)\,\Gam_{m} (\a/2)}{\Gam_{m} (m/2)\,  \Gam_{m} ((\a+n-k-m)/2)}.\]
If $2m\le n$,  $u\in V_{n,m}$, then
\be \label {782m}(F_{m}M^\a f)(u)\!=\!(M^\a
F_{m}f)(u)\!=\!c_{n,m}(\a)\, (Q^{\a+n-2m} f)(u),\ee
\[c_{n,m}(\a)=\frac{\Gam_{m} ((n-m)/2)\,\Gam_{m} (\a/2)}{\Gam_{m} (m/2)\,  \Gam_{m} ((\a+n-2m)/2)}.\]
\end{theorem}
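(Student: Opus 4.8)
The plan is to write each composition as an iterated integral, interchange the order of integration, and evaluate the resulting inner integral in closed form by the polar decomposition together with formula (\ref{mnv}). I start with (\ref{782}). Combining the definition (\ref{876a}) of $F_{m,k}$ with (\ref{nkmt}), I would write
\be\label{plan1}(F_{m,k}M^\a f)(u)=\intl_{\{w\in\vnm:\, u'w=0\}}d_uw\,\intl_{\vnm} f(v)\,|v'ww'v|^{(\a-m)/2}\,d_*v.\ee
Here the inner variable $v$ ranges over all of $\vnm$ independently of $w$, so (\ref{plan1}) is a genuine product-domain double integral. Since $M^\a f\in L^1(\vnm)$ (apply Theorem \ref{exi}(ii) with $k=m$ to $|f|$) and $F_{m,k}$ carries $L^1$ to an a.e.\ absolutely convergent integral (Corollary \ref{laas}), Fubini's theorem applies for $Re\,\a>m-1$, and the problem reduces to computing, for fixed $u\in\vnk$ and $v\in\vnm$,
$$J(u,v)=\intl_{\{w\in\vnm:\, u'w=0\}}|v'ww'v|^{(\a-m)/2}\,d_uw.$$

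To evaluate $J$ I would use $O(n)$-invariance to take $u=u_0$ as in (\ref{klop8}), so that the frames orthogonal to $u$ are $w=\left[\begin{array}{c}\om\\ 0\end{array}\right]$ with $\om\in V_{n-k,m}$ and $d_uw=d_*\om$. Writing $v$ in block form with top block $a\in\frM_{n-k,m}$ and bottom block $b\in\frM_{k,m}$, one has $v'w=a'\om$, whence $J=\intl_{V_{n-k,m}}|a'\om\om'a|^{(\a-m)/2}\,d_*\om$. The polar decomposition $a=\tilde w\,S^{1/2}$, $S=a'a$ (valid for a.e.\ $v$, since $k\le n-m$ forces $\rank a=m$ generically) factors the integrand as $|S|^{(\a-m)/2}\,|\tilde w'\om\om'\tilde w|^{(\a-m)/2}$; the remaining integral over $V_{n-k,m}$ is exactly of the form (\ref{mnv}) with $n$ replaced by $n-k$ and both Stiefel manifolds equal to $V_{n-k,m}$, and it produces the constant $c_{n,k,m}(\a)$. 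Finally $v'v=I_m$ gives $S=a'a=I_m-b'b=I_m-v'u_0u_0'v$, so rotating back yields $J(u,v)=c_{n,k,m}(\a)\,|I_m-v'uu'v|^{(\a-m)/2}$. Reinserting this into (\ref{plan1}) and comparing with (\ref{c0mby}) proves (\ref{782}).

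For (\ref{782m}) I would specialize (\ref{782}) to $k=m$: then $F_{m,k}=F_m$, and $\S^{\a+n-2m}_{m,m}=Q^{\a+n-2m}$ by (\ref{0mbyr}), while $c_{n,m,m}(\a)=c_{n,m}(\a)$; this gives $F_mM^\a f=c_{n,m}(\a)\,Q^{\a+n-2m}f$. For the remaining identity $M^\a F_mf=F_mM^\a f$ I would run the same computation in the opposite order: after interchanging integrations, $M^\a F_mf$ reduces to the inner integral $\intl_{\{w\in\vnm:\, v'w=0\}}|u'ww'u|^{(\a-m)/2}\,d_vw$, which by the identical polar-coordinate argument equals $c_{n,m}(\a)\,|I_m-u'vv'u|^{(\a-m)/2}$; the Sylvester identity $\det(I_m-u'vv'u)=\det(I_m-v'uu'v)$ (both equal $\det(I_m-(u'v)(u'v)')$ via $\det(I-AB)=\det(I-BA)$) then matches this with the $Q$-kernel, giving $c_{n,m}(\a)\,Q^{\a+n-2m}f$ once more. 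Alternatively, since $M^\a$ and $Q^\b$ have symmetric kernels and $F_m$ is self-dual (the condition $u'v=0$ is symmetric, so $\fd=F_m$ for $k=m$, and (\ref{009a}) makes $F_m$ self-adjoint in the pairing $(\cdot,\cdot)$), the equality follows by transposing $F_mM^\a=c_{n,m}(\a)Q^{\a+n-2m}$.

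The computation itself is routine once (\ref{mnv}) is in hand; the step that needs genuine care is the opposite-order reduction of $M^\a F_m$ in (\ref{782m}). Unlike (\ref{plan1}), the inner Funk integration there runs over $\{v:\,w'v=0\}$, whose domain depends on the outer variable $w$, so interchanging the two integrations requires knowing that the invariant measure on the incidence set $\{(w,v)\in\vnm\times\vnm:\,w'v=0\}$ factors symmetrically, $d_*w\,d_wv=d_*v\,d_vw$. This is a consequence of $O(n)$-invariance and the uniqueness of the invariant measure, but it is the one point where the argument is not purely formal; the self-duality route sketched above sidesteps it entirely.
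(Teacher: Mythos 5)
Your proposal is correct and follows essentially the same route as the paper: for (\ref{782}) the paper likewise writes the composition as an iterated integral, interchanges the order, splits $v$ into blocks $a,b$, passes to polar coordinates $a=zs^{1/2}$, evaluates the inner integral over $V_{n-k,m}$ by (\ref{mnv}), and identifies $s=a'a=I_m-v'uu'v$. For (\ref{782m}) the paper obtains $M^\a F_m f=c_{n,m}(\a)Q^{\a+n-2m}f$ by specializing (\ref{gty}) to $k=m$, which rests on exactly the duality/self-adjointness mechanism you invoke in your alternative route, so the symmetric-measure issue you flag is sidestepped there just as you suggest.
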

\begin{proof} By (\ref{876a}), denoting $f_u=f \circ g_u$, we obtain
\bea(F_{m, k}M^\a f)(u)&=&\int_{V_{n-k,m}} (M^\a f_u)\left(\left[\begin{array} {c} \om
\\0
\end{array} \right]\right)\,d_*\om\nonumber\\
&=&\int_{V_{n-k,m}}d_*\om \int_{ V_{n, m}}f_u(v)\,\left | \left[\begin{array} {c} \om
\\0
\end{array} \right]' v \right |^{\a -m}\,d_*v.\nonumber\eea
Set $v=\left[\begin{array} {c} a
\\b
\end{array} \right]$, $a \in \frM_{n-k,m}$, $b\in \frM_{k,m}$ and write $a$ in polar coordinates $$a=zs^{1/2},\quad
 z\in V_{n-k,m}, \quad s=a'a=v'\sig_0\sig'_0 v, \quad \sig_0=\left[\begin{array} {c}
 I_{n-k}
\\0
\end{array} \right]\in V_{n,n-k}.$$
 Changing the order of integration, we have
\[ (F_{m, k}M^\a f)(u)=\int_{ V_{n, m}}f_u(v)\, d_*v\int_{V_{n-k,m}}|\om'a|^{\a -m}\,d_*\om.\] The inner integral equals $c\, |s|^{(\a -m)/2}$, where
\[c=\int_{V_{n-k,m}}|\om'z|^{\a -m}\,d_*\om=\frac{\Gam_{m} ((n-k)/2)\,\Gam_{m} (\a/2)}{\Gam_{m} (m/2)\, \Gam_{m} ((\a+n-k-m)/2)};\]
see (\ref{mnv}). Hence,
\bea (F_{m, k}M^\a f)(u)&=&c\,\int_{ V_{n, m}}\!f_u(v)\, |v'\sig_0\sig'_0 v|^{(\a -m)/2}\,d_*v\nonumber \\
&=&c\,\int_{ V_{n, m}}\!\!f(v)\, |I_m -v'uu' v|^{(\a
-m)/2}\,d_*v,\nonumber \eea as desired.
Equality (\ref{782m}) follows from (\ref{gty}) and
(\ref{782}).
\end{proof}

The  following  useful factorizations hold.

\begin{corollary} \label{85b} Let $f \in L^1 (\vnm)$.

\noindent {\rm (i)} If $ 2m\le n, \; Re \, \a> n-m-1$, then
\be \label {gty4}Q^{\a}f=d_\a\, M^{\a+2m-n} F_{m}f=d_\a\,
 F_{m} M^{\a+2m-n} f,\ee
\[ d_\a=\frac{\Gam_{m} (m/2)\, \Gam_{m} (\a/2)}{\Gam_{m}
((n-m)/2)\, \Gam_{m} ((\a+2m-n)/2)}.\]

\noindent {\rm (ii)} If $ m \le k, \; Re\, \a>k-1$, and $u\in \vnk$, then for any $\tilde u\in \{u\}^\perp$,
\be \label {782a}(\C_{m, k}^\a f)(u)=\tilde d_\a\, (F_{m, n-k}M^{\a+m-k} f)(\tilde u),\ee
\[\tilde d_\a=\frac{\Gam_{m} (m/2)\,\Gam_{m} (\a/2)}{\Gam_{m} (k/2) \,\Gam_{m} ((\a+m-k)/2)}.\]
\end{corollary}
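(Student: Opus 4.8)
The plan is to obtain both factorizations by reading the two preceding identities \emph{backwards}, that is, by substituting shifted parameters into (\ref{782m}) and (\ref{782}) and then verifying that the reciprocals of the resulting constants coincide with $d_\a$ and $\tilde d_\a$. No new integration is needed: everything reduces to the composition formulas already established, together with the sine--cosine duality (\ref{robp}).

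For part (i), I would start from (\ref{782m}), namely $F_{m} M^\b f = M^\b F_{m} f = c_{n,m}(\b)\, Q^{\b+n-2m} f$, valid for $Re\,\b>m-1$, and substitute $\b=\a+2m-n$. The hypothesis $Re\,\a>n-m-1$ is precisely $Re\,\b>m-1$, so the substitution is legitimate; moreover $F_{m} f\in L^1(\vnm)$ by Corollary \ref{laas}, and $M^{\a+2m-n}F_{m} f$ converges absolutely by Theorem \ref{exi}. Solving for $Q^\a f$ gives $Q^\a f = c_{n,m}(\a+2m-n)^{-1}\, M^{\a+2m-n}F_{m} f = c_{n,m}(\a+2m-n)^{-1}\, F_{m} M^{\a+2m-n} f$. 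It then remains to check the elementary identity $c_{n,m}(\a+2m-n)^{-1}=d_\a$: substituting $\b=\a+2m-n$ into the displayed expression for $c_{n,m}$ collapses the inner $\Gam_m$-factors and yields exactly the stated $d_\a$.

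For part (ii), I would apply (\ref{782}) with $k$ replaced by $n-k$ and the $M$-exponent replaced by $\a+m-k$. The hypotheses $m\le k$ and $Re\,\a>k-1$ translate precisely into the conditions $n-k\le n-m$ and $Re\,(\a+m-k)>m-1$ required there. This produces, for $\tilde u\in V_{n,n-k}$, the identity $(F_{m,n-k}M^{\a+m-k}f)(\tilde u)=c_{n,n-k,m}(\a+m-k)\,(\S_{m,n-k}^{\a}f)(\tilde u)$, since the resulting sine-transform superscript is $(\a+m-k)+\big(n-(n-k)-m\big)=\a$. I would then invoke (\ref{robp}) with $k$ replaced by $n-k$: it identifies $(\S_{m,n-k}^{\a}f)(\tilde u)$ with $\C_{m,k}^{\a}f$ evaluated on any $k$-frame orthogonal to $\{\tilde u\}$. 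Taking $\tilde u\in\{u\}^\perp$, so that $\{\tilde u\}^\perp=\{u\}$, and using the right $O(k)$-invariance of $\C_{m,k}^\a f$ to replace that $k$-frame by $u$ itself, one gets $(\S_{m,n-k}^{\a}f)(\tilde u)=(\C_{m,k}^{\a}f)(u)$. Rearranging gives $(\C_{m,k}^\a f)(u)=c_{n,n-k,m}(\a+m-k)^{-1}(F_{m,n-k}M^{\a+m-k}f)(\tilde u)$, and a final gamma computation shows $c_{n,n-k,m}(\a+m-k)^{-1}=\tilde d_\a$.

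The arguments are thus essentially mechanical. The only place demanding genuine care is part (ii): one must verify that replacing $k$ by $n-k$ throughout (\ref{782}) and (\ref{robp}) is internally consistent, that the orthogonality relation $\{\tilde u\}^\perp=\{u\}$ is used in the correct direction, and that the $O(k)$-invariance legitimately removes the ambiguity in the choice of orthogonal frame. The parameter bookkeeping---ensuring that every shifted exponent remains in the half-plane of absolute convergence dictated by Theorem \ref{exi}---is the main, if routine, obstacle.
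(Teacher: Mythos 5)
Your proposal is correct and follows essentially the same route as the paper: part (i) is exactly the substitution $\a\mapsto\a+2m-n$ in (\ref{782m}), and part (ii) is the paper's argument of combining (\ref{782}) with (\ref{robp}) under the replacement $k\mapsto n-k$ (the paper merely performs the exponent shift at the end rather than at substitution time). The constant verifications $c_{n,m}(\a+2m-n)^{-1}=d_\a$ and $c_{n,n-k,m}(\a+m-k)^{-1}=\tilde d_\a$ check out.
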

\begin{proof}  {\rm (i)} can be obtained from  (\ref{782m}) if we
replace $\a$ by $\a+2m-n$.  To prove {\rm (ii)}, we re-write (\ref{782}) using (\ref{robp}). Then we replace $k$ by $n-k$ and $\tilde u$ by $u$.
 This gives
\[(\C_{m, k}^{\a+k-m} f)(u)=\check d_\a\, (F_{m, n-k}M^{\a} f)(\tilde u), \qquad \tilde u\in \{u\}^\perp,\]
\[\check d_\a=\frac{\Gam_{m} (m/2)\,\Gam_{m} ((\a+k-m)/2)}{\Gam_{m} (k/2) \,\Gam_{m} (\a/2)}.\]
Now we replace $\a+k-m$ by $\a$, and we are done.
\end{proof}

Formula (\ref{782a}) was obtained by the author several years ago and reported to S. Alesker, who gave another proof of it; cf.  \cite [Proposition 1.2]{A}. A similar factorization in the language of Grassmannians is presented in \cite[Lemma 3.4]{Zh2}, however, without proof and without explicit constant.

The following statement
extends Theorem \ref{xcty} to $\a=0$; cf. \cite[Theorem 2.4]{Gri}, where the close result was obtained in the language of Grassmannians in the spectral form.
\begin{theorem} \label {plz} Let $1\le k,m\le n-1;
\; 2m \le n-k$. Then \be \label {arn}  \fd F_{m, k}f\!=\!
\tilde c\,Q^{n-k-m} f, \quad \tilde c\!=\!\frac{2^m
\pi^{(n-m)m/2}\,\Gam_{m}((n\!-\!k)/2)}{\Gam_{m}(n/2)\, \Gam_{m}
((n-k-m)/2)}.\ee
\end{theorem}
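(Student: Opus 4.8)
The plan is to compute $\fd F_{m, k}f$ directly as an integral against $f$ and to recognize the resulting kernel as that of $Q^{n-k-m}$. By $O(n)$-invariance it suffices to evaluate $(\fd F_{m, k}f)(w)$ at the base frame $w=v_0=\left[\begin{array}{c}0\\ I_m\end{array}\right]$ from (\ref{klop8}). Using (\ref{876ab}), the condition $v_0'u=0$ forces $u=\left[\begin{array}{c}\tilde u\\ 0\end{array}\right]$ with $\tilde u\in V_{n-m,k}$; here the inequality $k\le n-2m$, which is equivalent to the hypothesis $2m\le n-k$, guarantees that such frames exist. Thus
\[(\fd F_{m, k}f)(v_0)=\int_{V_{n-m,k}}(F_{m, k}f)\left(\left[\begin{array}{c}\tilde u\\ 0\end{array}\right]\right)d_*\tilde u .\]
Inserting the definition (\ref{876a}) of $F_{m,k}f$ and writing $v=\left[\begin{array}{c}a\\ b\end{array}\right]$ with $a\in\frM_{n-m,m}$, $b\in\frM_{m,m}$, the two orthogonality constraints collapse to $\tilde u'a=0$, while $a'a=I_m-b'b$ and hence $|a'a|=|I_m-b'b|=|I_m-v_0'vv'v_0|$. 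I would then interchange the order of integration, integrating out the frame $\tilde u$ for fixed $v$.

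The core step is the disintegration of the double integral over the incidence set $\{(\tilde u,v):\tilde u'a=0\}$. For almost every fixed $v$ the admissible $\tilde u$ run over the $k$-frames in $\bbr^{n-m}$ orthogonal to the $m$-dimensional space $\span(a)$; the Radon--Nikodym factor between the two disintegrations depends on $v$ only through the principal angles between $\span(w)$ and $\span(v)$, i.e.\ through $a'a=I_m-v_0'vv'v_0$, so by homogeneity it must be a pure power $|I_m-v_0'vv'v_0|^{-k/2}$. To extract this factor I would evaluate the inner integral by a Stiefel integration formula applied in $\bbr^{n-m}$ — either the bi-Stiefel decomposition of Lemma \ref{l2.1} or Lemma \ref{25gr} — in exact parallel with the proof of Example \ref{iikd}, where the companion computation for the dual \emph{cosine} kernel produced $|I_m-v_0'vv'v_0|^{(\a-k)/2}$. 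Since $\det(I_m-AA')=\det(I_m-A'A)$ with $A=w'v$, this identifies
\[(\fd F_{m, k}f)(w)=\tilde c\int_{\vnm}f(v)\,|I_m-v'ww'v|^{-k/2}\,d_*v=\tilde c\,(Q^{n-k-m}f)(w),\]
the exponent $-k/2=\big((n-k-m)+m-n\big)/2$ matching (\ref{0mbyr}); absolute convergence of the right-hand side holds precisely because $2m\le n-k$ gives $n-k-m>m-1$ (cf.\ Theorem \ref{exi}), the very inequality that made the construction possible.

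It remains to pin down $\tilde c$, and this is the delicate point. The proportionality and the exponent are forced by equivariance and homogeneity, but the constant requires honest bookkeeping of the Stiefel volumes $\sigma_{n,m}$ from (\ref{2.16}) together with the Siegel gamma factors produced by the beta-type integral (\ref{2.6d}); this is where the factor $2^m\pi^{(n-m)m/2}=\sigma_{n-m,m}\,\Gam_m((n-m)/2)$ enters, reflecting the volume of the intermediate frame manifold that is \emph{not} absorbed by a probability normalization. As a guide and consistency check I would first run the argument for $m=1$, where $\det(v'uu'v)=0$ is equivalent to $u'v=0$, so that the Funk transform is literally the residue at $\a=0$ of the cosine transform; taking the residue at $\a=0$ in the already verified identity (\ref{gty}) then reproduces $\tilde c$ from the residue of $c_\a$ and the elementary residue of $|t|^{\a-1}$. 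The main obstacle in the general case is exactly that for $m>1$ this residue shortcut fails — the cosine kernel concentrates on the whole singular locus $\{\det(v'uu'v)=0\}$ rather than on the Funk locus $\{u'v=0\}$ — so the direct disintegration, with its careful tracking of the Stiefel-volume and Siegel-gamma constants, cannot be bypassed.
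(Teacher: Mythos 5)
Your plan follows the same route as the paper's proof: reduce to a base frame, expand $\fd F_{m,k}$ as a double integral, and use the bi-Stiefel decomposition to recognize the kernel $|I_m-w'vv'w|^{-k/2}$ of $Q^{n-k-m}$. The difference in organization is that the paper never disintegrates over the incidence set for fixed $v$; it parametrizes the whole composition by applying Lemma \ref{l2.1} to the inner $V_{n-k,m}$-integral (splitting $n-k=m+(n-k-m)$), absorbing the outer $O(n-m)$-average to enlarge $V_{n-k-m,m}$ to $V_{n-m,m}$, and then running Lemma \ref{l2.1} backwards with the splitting $n=(n-m)+m$ to reassemble a full frame in $\vnm$. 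The exponent $-k/2$ is then not guessed but read off as the mismatch $\bigl((n-k-m)-(n-m)\bigr)/2$ between the two bi-Stiefel weights, and the constant is the leftover ratio $\sig_{n-k-m,m}\sig_{n,m}/\sig_{n-k,m}$ of Stiefel volumes, which is exactly the stated $\tilde c$.

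Those are precisely the two points your write-up defers, and the deferral is the gap. ``By homogeneity it must be a pure power of the determinant'' is not an argument: nothing is being dilated here, and a priori the marginal density of $v$ under your incidence measure could be any $O(m)$-invariant function of the matrix $a'a$, not a power of $|a'a|$. Likewise the claimed ``exact parallel'' with Example \ref{iikd} is imperfect: there the inner integral is of an absolutely continuous kernel over all of $V_{n-m,k}$ and is evaluated by (\ref{mnv}), whereas your inner integral is supported on a lower-dimensional incidence subvariety and requires a genuine coarea/disintegration computation. You have named the right tools (Lemma \ref{l2.1}, Lemma \ref{25gr}), so the plan is completable, but as written the decisive computation --- the one that actually produces $-k/2$ and $\tilde c$ --- has not been carried out, and the proposed shortcuts would not substitute for it.
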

\begin{proof}
Let $g_v$ be an orthogonal transformation which sends $v_0=
\left[\begin{array} {c} 0  \\  I_m
\end{array} \right]$ to $v\in \vnm$. We denote $f_v (w)= f(g_v w)$.
By (\ref{la3v}) and (\ref{la3vd}), \bea (\fd F_{m, k}f)(v)
&=&\int_{\check {K}_0 }(F_{m, k}f)(g_v \rho \check u_0)\, d\rho
\nonumber\\
&=&\int_{\check {K}_0 }d\rho\int_{O(n-k)} f_v\left(\rho
\z'\left[\begin{array} {cc} \gam & 0
\\ 0 & I_{k}
\end{array} \right]\, \left[\begin{array} {c}  I_m \\  0 \end{array}
\right] \right )\, \, d\gam,\nonumber\eea where $\z\in O(n)$ is
defined by (\ref{78v}) and satisfies $\z' u_0 =\check{u}_0$. Hence,
 \[(\fd F_{m,
k}f)(v)=\int_{O(n-m)} d\del\int_{V_{n-k,m}}f_v \left
(\left[\begin{array} {cc} \del & 0
\\ 0 & I_m
\end{array} \right]\z'\left[\begin{array} {c}  w\\  0 \end{array}
\right]\right )\,d_* w.\]
 Using the bi-Stiefel
decomposition (\ref{2.11}) (replace $n$ by $n-k$, and $k$ by $m$),
the last expression can be written as \bea
&&\frac{1}{\sig_{n-k,m}}\int_{O(n-m)}  d\del \int_0^{I_m} d\nu(r) \int_{ V_{m, m}}d\gam\nonumber\\
&&\times \int_{ V_{n-k-m, m}} f_v \left (\left[\begin{array} {cc}
\del & 0
\\ 0 & I_m
\end{array} \right]\z'
 \left[\begin{array} {c}  \gam r^{1/2}\\ u(I_m -r)^{1/2} \\ 0 \end{array}
\right]\right )\, du. \nonumber\\
&&=\frac{1}{\sig_{n-k,m}}\int_0^{I_m} d\nu(r) \int_{ V_{m, m}}d\gam
\int_{ V_{n-k-m, m}} du\nonumber\\
&&\times \int_{O(n-m)}   f_v \left ( \left[\begin{array} {c} \del
\left[\begin{array} {c}  0 \\   u\end{array} \right] (I_m
-r)^{1/2}\\
\gam r^{1/2}
\end{array}
\right]\right )\, d\del \nonumber\\
&&=\frac{\sig_{n-k-m,m}}{\sig_{n-k,m}}
\int_0^{I_m} d\nu(r) \int_{ V_{m, m}}d\gam\nonumber\\
&&\times \int_{ V_{n-m, m}}  f_v \left ( \left[\begin{array} {c}
  \theta (I_m -r)^{1/2}\\
\gam r^{1/2} \end{array} \right]\right )\, d\theta. \nonumber\eea
 Here
\[ d\nu(r)\!=\!2^{-m} |r|^{m/2-d} |I_m \!-\!r|^{(n-k-m)/2-d}\, dr, \qquad 2m\!\le \!n\!-\!k.\]  Now we change variables $r \to I_m -r$ and use
(\ref{2.11}) (with $k$ replaced by $n-m$) in the opposite
 direction. We obtain
 \bea (\fd F_{m,
k}f)(v)&=&\frac{\sig_{n-k-m,m}}{\sig_{n-k,m}} \int_{ V_{n, m}}  f_v
(w) |w'(I_n-v_0 v'_0) w|^{-k/2}\, dw\nonumber\\
&=&\frac{\sig_{n-k-m,m}\, \sig_{n,m}}{\sig_{n-k,m}} \int_{ V_{n, m}}
f (w) |I_m-w'vv'w|^{-k/2}\, d_*w.\nonumber\eea Owing to
(\ref{0mbyr}) and (\ref{2.16}), this is exactly what we need.
\end{proof}

 \begin{remark} {\rm The  assumption $2m \le n-k$ in Theorem \ref{plz} is necessary
for absolute convergence of the integral on the right-hand side in (\ref{arn}), whereas the  left hand-side is finite a.e. under the
weaker assumption $m \le n-k$, which is sharp. By Lemma  \ref{mkloi} below, the
condition $2m \le n-k$ can be eliminated if we interpret (\ref{arn}) in the $\S'$-sense.}\end{remark}

\section { Cosine  Transforms via the Fourier Analysis}\label{mlz5}

 In this section we proceed to develop  the theory of the Funk, cosine and sine transforms
  using the Fourier analysis in the ambient matrix space. The consideration
 essentially relies on the idea of analytic continuation, when the one-dimensional exponent $\a$
 is replaced by a vector-valued complex parameter  $\lv\in\bbc^m$.

 \subsection{The composite cosine  transform}

 Let $\lv=(\lam_1,\ldots, \lam_m)\in\bbc^m$ and let $(\cdot)^{\lv}$ be the corresponding
 composite power function associated with the cone $\Omega$; see Section \ref{secpf}. We consider {\it the
 composite cosine transform}
\be\label{tnf}(T_{k,m}^{\lv} \vp)(v)=\int_{\vnk} \vp(u) \, (v'uu'v)^{\lv} \,
d_*u, \qquad v\in\vnm,\ee
which was studied in \cite{OR3, OR4} when $k=m$. The dual  cosine transform (\ref {0mbyd}) is a particular case of (\ref{tnf}), corresponding to $\lam_1\!=\!\ldots\!=\!\lam_m\!=\a\! -\!k$.
\begin{lemma}\label {rexi}  Let $1\le m\le k\le n-1$,  $\vp \in L^1(\vnk)$,
\be \label {frle} \frL=\{\lv\in\bbc^m :
Re\,\lam_j>j-k-1 \quad \text{\it for each} \quad j=1,\dots, m \}.\ee
 The integral $(T_{k,m}^{\lv} \vp)(v)$
converges absolutely for almost all   $v\in\vnm$ if and only if $\lv \in
\frL$ and represents an analytic function of  $\lv$ in this
domain.  Moreover,
\be \int_{\vnm}(T_{k,m}^{\lv} \vp)(v)\, d_*v= \frac{\Gam_m (m/2)\,\Gam_\Om(\lv+\kv)}{\Gam_m (k/2)\, \Gam_\Om(\lv+\nv)}\, \int_{\vnk} \vp(u)\, d_*u.\ee
\end{lemma}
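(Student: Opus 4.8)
The plan is to deduce convergence, analyticity, and the integral identity all from the evaluation of a single constant. First I would write, for $\vp\in L^1(\vnk)$,
\[ \int_{\vnm}(T_{k,m}^{\lv}\vp)(v)\,d_*v=\int_{\vnm}d_*v\int_{\vnk}\vp(u)\,(v'uu'v)^{\lv}\,d_*u, \]
and apply Lemma \ref{hbi1} to $f(b)=(bb')^{\lv}$ with $b=v'u\in\frM_{m,k}$: the inner integral $\int_{\vnm}(v'uu'v)^{\lv}\,d_*v$ is then a constant $C(\lv)$, independent of $u$. Hence $\int_{\vnm}(T_{k,m}^{\lv}\vp)(v)\,d_*v=C(\lv)\int_{\vnk}\vp(u)\,d_*u$, and the identity follows once $C(\lv)$ is known. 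Because $|(v'uu'v)^{\lv}|=(v'uu'v)^{Re\,\lv}$ (the ratios $\Del_i/\Del_{i-1}$ in (\ref{pf}) being positive), the same computation with $|\vp|$ gives $\int_{\vnm}d_*v\int_{\vnk}|\vp(u)|(v'uu'v)^{Re\,\lv}d_*u=C(Re\,\lv)\int_{\vnk}|\vp(u)|\,d_*u$, so absolute convergence for almost every $v$ (for all $\vp\in L^1$) is controlled exactly by the finiteness of $C(Re\,\lv)$; for necessity I would test $\vp\equiv1$, for which $(T^{\lv}_{k,m}\vp)(v)\equiv C(\lv)$.

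The heart of the argument is the evaluation of $C(\lv)$. Using its independence of $u$, I fix $u=\left[\begin{array}{c} I_k \\ 0\end{array}\right]$, so that $u'v=a$ is the top $k\times m$ block of $v$ and $C(\lv)=\int_{\vnm}(a'a)^{\lv}\,d_*v$. I would then evaluate the Gaussian integral $\int_{\frM_{n,m}}(Z_1'Z_1)^{\lv}e^{-\tr(Z'Z)/2}\,dZ$ in two ways, where $Z_1\in\frM_{k,m}$ denotes the top block of $Z$. On one side, the decomposition $Z=vt$ ($v\in\vnm$, $t\in T_m$) of Lemma \ref{sph}, combined with the factorization $(t'(a'a)t)^{\lv}=(t't)^{\lv}(a'a)^{\lv}$ of (\ref{pr6}) — which holds exactly because $t$ is triangular — splits the integral as $\sig_{n,m}C(\lv)\,D_n(\lv)$, where $D_n(\lv)$ is a pure $T_m$-integral; the case $k=n$ (where $a'a=v'v=I_m$) pins down $D_n(\lv)$. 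On the other side, integrating out the Gaussian block $Z_2\in\frM_{n-k,m}$ and applying the polar decomposition (Lemma \ref{l2.3}) to $Z_1$ reduces the integral to $\int_{\Omega}\rho^{\lv+\kv}e^{-\tr\rho/2}\,d_*\rho$, which is a multiple of $\Gam_\Om(\lv+\kv)$ by (\ref{gf}); the full matrix similarly produces $\Gam_\Om(\lv+\nv)$. Comparing the two evaluations and simplifying the $\sig$-factors via (\ref{2.16}) gives
\[ C(\lv)=\frac{\Gam_m(n/2)\,\Gam_\Om(\lv+\kv)}{\Gam_m(k/2)\,\Gam_\Om(\lv+\nv)}. \]
As a consistency check, $C(\mathbf 0)=1$, since $\Gam_\Om(\kv)=\Gam_m(k/2)$ and $\Gam_\Om(\nv)=\Gam_m(n/2)$ by (\ref{det}).

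It remains to read off the three assertions. By (\ref{gf}) the gamma factor $\Gam_\Om(Re\,\lv+\kv)$ is finite if and only if $Re\,\lam_j+k>j-1$, i.e.\ $Re\,\lam_j>j-k-1$ for every $j$; this is exactly $\lv\in\frL$, and together with the two convergence directions above it establishes (i). For analyticity, with $\vp$ fixed and $v$ generic the integrand $\lv\mapsto(v'uu'v)^{\lv}$ is entire, and on compact subsets of the tube domain $\frL$ it admits the integrable majorant $|\vp(u)|(v'uu'v)^{Re\,\lv}$ with $Re\,\lv$ confined to a compact set; Morera's theorem (or differentiation under the integral sign) then gives holomorphy of $(T_{k,m}^{\lv}\vp)(v)$ in $\frL$. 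I expect the main obstacle to be the evaluation of $C(\lv)$: the composite power function does \emph{not} factor through the symmetric polar factor $\rho^{1/2}$, so the computation must be routed through the triangular decomposition of Lemma \ref{sph} in order for (\ref{pr6}) to apply, and the Gaussian normalizations together with the $\sig_{n,m}$-bookkeeping from (\ref{2.16}) must be tracked carefully.
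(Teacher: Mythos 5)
Your proposal is correct and follows essentially the same route as the paper: the paper's proof of this lemma is a one-line reduction to Lemma \ref{exl} in the Appendix, and your evaluation of $C(\lv)$ reproduces that appendix argument — a Gaussian matrix integral computed two ways, once via the block splitting plus polar coordinates yielding $\Gam_\Om(\lv+\kv)$, and once via the decomposition $x=vt$ of Lemma \ref{sph} together with $(t'rt)^{\lv}=(t't)^{\lv}r^{\lv}$ to split off the Stiefel integral. The only (harmless) deviation is that you pin down the residual $T_m$-integral by the $k=n$ normalization rather than computing it directly as a product of one-dimensional Gamma integrals, as the paper does.
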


This statement follows immediately from Lemma \ref{exl} in Appendix.

\subsection{The  complementary Radon   transform}
This is a new transformation that does not occur in \cite {OR4, OR3} in the case $k=m$. It takes a function $\vp (u)$ on $\vnk$ to a function $(A_{k,m} \vp)(v)$ on $\vnm$ by the formula
\bea \label {akm} \qquad(A_{k,m} \vp)(v)&=&\int_{V_{n-m, k-m}}\!\!\!\!\vp \left (g_v \left[\begin{array} {cc} a & 0 \\  0 & I_{m} \end{array} \right]\right )\, d_*a \\&=& (F_{k-m,m} \vp ([\cdot, v]))(v), \qquad  1\le m\le k\le n-1,\nonumber\eea
where  $ g_v$ is an orthogonal transformation that sends $v_0\!=\! \left[\begin{array} {c}  0 \\  I_{m} \end{array} \right]$ to $v\!\in \!\vnm$.
This can be regarded as a ``partial Funk transform'' (cf.  (\ref{876a})), where
  $\vp$ is integrated over all orthonormal $k$-frames $u$ in $\bbr^n$, the last $m$ columns of which are replaced by   $v$, and the first $k-m$ columns are orthogonal to $v$. If $k=m$, then $A_{k,m}$ reduces to the identity operator.

\begin{lemma}\label {exix}  Let $1\le m\le k\le n-1, \; \vp\in L^1(\vnk)$. Then
\[ \int_{\vnm} (A_{k,m} \vp)(v)\, d_*v=\int_{\vnk} \vp(u)\, d_*u.\]
 \end{lemma}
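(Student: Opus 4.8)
The plan is to prove the integral identity by integrating the defining formula for $(A_{k,m}\vp)(v)$ over $\vnm$ and reducing to a known measure-theoretic statement. First I would write the left-hand side explicitly using the definition (\ref{akm}):
\[
\int_{\vnm} (A_{k,m}\vp)(v)\, d_*v = \int_{\vnm} d_*v \int_{V_{n-m,\,k-m}} \vp\left(g_v \left[\begin{array}{cc} a & 0 \\ 0 & I_m \end{array}\right]\right)\, d_*a,
\]
where $g_v\in O(n)$ satisfies $g_v v_0 = v$. The key observation is that the inner frame $g_v\bigl[\begin{smallmatrix} a & 0 \\ 0 & I_m \end{smallmatrix}\bigr]$ is precisely an orthonormal $k$-frame in $\bbr^n$, so the combined integration over $v\in\vnm$ and $a\in V_{n-m,\,k-m}$ sweeps out the full Stiefel manifold $\vnk$.

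The main step is therefore to justify that this double integral is simply an integral of $\vp$ over $\vnk$ against the invariant probability measure. I would realize this either by passing to the group $G=O(n)$, writing $v = g v_0$ with $g$ ranging over $G$ (with its normalized Haar measure) and recognizing that the composite map $(g,a)\mapsto g\,\bigl[\begin{smallmatrix} a & 0 \\ 0 & I_m \end{smallmatrix}\bigr]$ covers $\vnk$ uniformly, or more cleanly by invoking the fibration structure of $\vnk$ over $\vnm$. Concretely, the map sending a $k$-frame $u\in\vnk$ to its last $m$ columns (after suitable normalization) realizes $\vnk$ as a bundle over $\vnm$ with fiber $V_{n-m,\,k-m}$, and the invariant measures are compatible with this decomposition; this is essentially the statement that $u\mapsto(\text{last }m\text{ columns},\ \text{remaining frame in the orthocomplement})$ factorizes the Haar-type measure. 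Since both measures involved are normalized probability measures and the relevant groups act transitively, the fibered integration yields exactly $\int_{\vnk}\vp(u)\,d_*u$.

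The cleanest route avoids ad hoc Jacobian computations entirely: I would note that $A_{k,m}$ is, by the second line of (\ref{akm}), a partial Funk transform of the type $F_{k-m,m}$ applied in the orthocomplement $\{v\}^\perp\cong\bbr^{n-m}$, and then apply Corollary \ref{laas} in that ambient space, which gives precisely that the Funk transform preserves the total integral (with the normalized measures). More directly, Lemma \ref{hbi1} and the measure-disintegration underlying the Stiefel fibration reduce the claim to the normalization (\ref{2.16}) of $\sig_{n,m}$ together with the analogous constant for the fiber, and these cancel because all measures are probability measures. The main obstacle I anticipate is bookkeeping the normalizing constants: one must verify that the product of the fiber measure on $V_{n-m,\,k-m}$ and the base measure on $\vnm$ reproduces the probability measure on $\vnk$ with coefficient exactly $1$, rather than some ratio of $\sigma$-constants. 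Getting this constant to be $1$ is the crux, and it follows because the statement is phrased entirely in terms of the invariant \emph{probability} measures $d_*$, so the $\sig_{n,m}$ factors divide out cleanly; I would confirm this by tracking the measures through the fibration rather than through (\ref{2.16}) directly.
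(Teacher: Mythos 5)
Your first route---writing $v=gv_0$ with $g$ ranging over $O(n)$ with normalized Haar measure, parametrizing the fiber $V_{n-m,k-m}$ by $O(n-m)$, and using invariance to see that $(g,a)\mapsto g\bigl[\begin{smallmatrix} a & 0 \\ 0 & I_m\end{smallmatrix}\bigr]$ distributes uniformly over $\vnk$---is exactly the paper's proof, and your observation that all normalizing constants cancel because every measure involved is a probability measure is the reason the argument closes with coefficient $1$. The detour through Corollary \ref{laas} is unnecessary (and not quite a direct application, since one must also integrate over the base point $v$), but the primary argument is correct and essentially identical to the paper's.
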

\begin{proof}
\bea \l.h.s&=&\int_{O(n)}dg\int_{O(n-m)}\vp \left (g \left[\begin{array} {cc} \del & 0 \\  0 & I_{m} \end{array} \right]
\left[\begin{array} {c}  0 \\  I_{k} \end{array} \right]\right )\, d\del\nonumber\\
&=&\int_{O(n)}\vp \left (g
\left[\begin{array} {c}  0 \\  I_{k} \end{array} \right]\right )dg=r.h.s.\nonumber\eea
\end{proof}

\subsection{The basic functional equation}

The following lemma from  \cite{OR3} will be needed. We present it with proof in view of its fundamental importance.
We denote  \bea\label{abs} \qquad {\bf
\Lam}&=&\{\lv\!\in\!\bbc^m :
Re\,\lam_j\!>\!j\!-\!n\!-\!1 \quad \text{\it for each} \quad j\in \{1,\dots, m \}, \\
\label{pset}
{\bf \Lam}_0&=&\{\lv\!\in\!\bbc^m : \lam_j=j-n-l \quad \text{\it for some}\quad j \in \{1,\dots, m\}\\
 &{}& \text{\it and some} \quad l \in \{1,3,5,\dots \} \}.
 \nonumber
 \eea

\begin{lemma}\label{l3.1}  Let $f\in L^1(\vnm)$, $\phi\in \S(\Ma)$.
The integrals
\be \label {mlppi}(r^\lv f, \phi)\!=\!\intl_{\Ma}\!\!
r^{\lv} \, f(v) \, \overline{\phi(x)} \,  dx,\quad (r_*^\lv f, \phi)\!=\!\intl_{\Ma}\!\! r_*^{\lv} \, f(v) \,
\overline{\phi(x)}  \, dx\ee
 are  absolutely convergent if and only if $\lv \in
{\bf\Lam}$ and extend
 as  meromorphic functions of $ \lv$
with the  polar set ${\bf \Lam}_0 $.  The normalized
integrals
\be\label{nzi}\frac{
(r^\lv f, \phi)}{\Gam_\Om(\lv+\nv)}, \qquad
\frac{(r_*^\lv f, \phi)}{\Gam_\Om(\lv+ \nv)},
\ee $\nv=(n, \ldots, n)$, extend as   entire functions of $\lv$.
 \end{lemma}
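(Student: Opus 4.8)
The plan is to pass to polar coordinates on $\Ma$ and reduce both integrals in (\ref{mlppi}) to a single integral over the cone $\Omega$, whose analytic behaviour is dictated by the Siegel gamma function $\Gam_\Om$. First I would use Lemma \ref{l2.3} to write $x=vr^{1/2}$, $v\in\vnm$, $r=x'x\in\Omega$, with $dx=2^{-m}|r|^{(n-m-1)/2}\,dr\,dv$. Since $|r|^{n/2}=r^{(n,\dots,n)}$ and $d_*r=|r|^{-d}dr$, one has $r^{\lv}|r|^{(n-m-1)/2}\,dr=r^{\lv+\nv}\,d_*r$, so that
\[
(r^\lv f,\phi)=2^{-m}\intl_{\Omega} r^{\lv+\nv}\,H(r)\,d_*r,\qquad H(r)=\intl_{\vnm} f(v)\,\overline{\phi(vr^{1/2})}\,dv .
\]
As $|r^\lv|=r^{Re\,\lv}$ and $H$ decays rapidly when $r\to\infty$ (because $\phi\in\S(\Ma)$), absolute convergence is governed solely by the behaviour of $r^{Re\,\lv+\nv}$ near the boundary of $\Omega$. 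Comparing with the convergence of (\ref{gf}) gives convergence precisely when $Re\,\lam_j+n>j-1$ for every $j$, i.e. $\lv\in{\bf\Lam}$, with divergence otherwise, since the majorant is a positive cone integral of gamma type. This settles the convergence assertion.

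The decisive point is that the radial density $H$ extends smoothly across the singular boundary of $\Omega$. Here I would use that $f$ is right $O(m)$-invariant, as it is in all applications of this lemma. Put $\Xi(s)=\int_{\vnm} f(v)\,\overline{\phi(vs)}\,dv$ for $s\in\frM_{m,m}$; then $\Xi$ is $C^\infty$ by differentiation under the integral, and the substitution $v\mapsto v\gam^{-1}$ together with the right $O(m)$-invariance of $f$ and of $dv$ shows $\Xi(\gam s)=\Xi(s)$ for $\gam\in O(m)$. By the classical theorem on smooth $O(m)$-invariants, $\Xi$ descends to a smooth function $\Xi_0$ of the Gram matrix $s's$, whence $H(r)=\Xi(r^{1/2})=\Xi_0(r)$ is smooth on $\bar\Omega$. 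Without invariance $H$ is only smooth in $r^{1/2}$, and the stated conclusion genuinely fails: already for $n=m=1$ a function with $f(1)\neq f(-1)$ produces a pole at $\lam_1=-2\notin{\bf\Lam}_0$. This is the crux of the proof.

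With $H$ smooth on $\bar\Omega$ it remains to continue $J(\mv)=\int_\Omega r^{\mv}\,\psi(r)\,d_*r$ meromorphically, for $\mv=\lv+\nv$ and $\psi=H$ smooth and rapidly decreasing, and to locate its poles. I would use the triangular substitution $r=t't$, $t\in T_m$, under which $r^{\mv}=\prod_j t_{jj}^{\mu_j}$ and $d_*r=c_m\prod_j t_{jj}^{-j}\,dt_{jj}\,dt_*$, so that $J(\mv)=c_m\int_{T_m}\prod_j t_{jj}^{\mu_j-j}\,\psi(t't)\,dt_{jj}\,dt_*$. The key observation is that $\psi(t't)$ is invariant under the sign change of a whole row of $t$, i.e. $t\mapsto Dt$ with $D=\diag(\pm1)$; hence, after integrating the off-diagonal entries over $\bbr$, the resulting function of $(t_{11},\dots,t_{mm})$ is even in each $t_{jj}$ separately. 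Only even powers of $t_{jj}$ therefore occur, and the one-variable Mellin analysis in each $t_{jj}$ produces poles exactly at $\mu_j-j=-1,-3,-5,\dots$, that is at $\mu_j=j-l$, $l\in\{1,3,5,\dots\}$ --- precisely the poles of $\Gam((\mu_j-j+1)/2)$, hence of $\Gam_\Om(\lv+\nv)$. Consequently $(r^\lv f,\phi)$ continues meromorphically with polar set contained in ${\bf\Lam}_0$, and division by $\Gam_\Om(\lv+\nv)$ removes every pole, giving an entire function. The reverse power $r_*^\lv$ is treated identically after the change $r\mapsto\omega r\omega$ of (\ref{ommp}), which replaces $\psi$ by $r\mapsto\psi(\omega r\omega)$ and, since reversal fixes $\nv$, leaves both the polar set and the normalizing factor unchanged. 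The main obstacle throughout is the smoothness step of the second paragraph: it is exactly the passage from a function of $r^{1/2}$ to a genuine function of $r$ that confines the poles to the odd sublattice ${\bf\Lam}_0$ and makes the normalized integrals entire.
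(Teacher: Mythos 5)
Your argument is correct and, after the initial detour through polar coordinates on the cone, lands on essentially the computation the paper performs: the paper goes straight to $x=vt$, $t\in T_m$ (Lemma \ref{sph}), writes $(r^{\lv}f,\phi)$ as $\int_{\bbr^m_+}F(t_{1,1},\dots,t_{m,m})\prod_j t_{j,j}^{\lam_j+n-j}\,dt_{j,j}$, asserts that $F$ extends as an even Schwartz function of each $t_{j,j}$, substitutes $s_{j,j}=t_{j,j}^2$, and reads off the poles from the one--dimensional distributions $(s_{j,j})_+^{(\lam_j+n-j-1)/2}$, with Hartogs' theorem supplying joint meromorphy. The genuinely valuable difference is your second paragraph: the evenness of $F$, which the paper states without justification, is not automatic. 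Negating the $j$-th row of $t$ leaves $t't$ fixed, but in the paper's $F$ it replaces $f(v)$ by $f(vD_j)$, $D_j=\diag(1,\dots,1,-1,1,\dots,1)$, after the compensating substitution $v\mapsto vD_j$; so evenness --- and with it the confinement of the poles to the odd sublattice ${\bf \Lam}_0$ and the entirety of the normalized integrals --- requires $f$ to be invariant under right multiplication by diagonal sign matrices. Your $n=m=1$ counterexample is valid: for $f(\pm 1)=\pm 1$ one gets $x_+^{\lam}-x_-^{\lam}$, whose poles at $\lam=-2,-4,\dots\notin{\bf \Lam}_0$ survive division by $\Gam_\Om(\lv+\nv)=\Gam((\lam+1)/2)$, so the lemma as stated for arbitrary $f\in L^1(\vnm)$ is too strong, while the right $O(m)$-invariance you impose (and exploit via the smooth-invariant theorem) is available wherever the full meromorphic statement is actually used later ($A_{k,m}\vp$, $M^{\a}f$, etc.). Two smaller remarks. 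First, your ``only if'' is weaker than the paper's: divergence of a positive majorant does not by itself force divergence of the integral, and the paper settles sharpness by the explicit choice $f\equiv 1$, $\phi(x)=e^{-\tr(x'x)}$, which reproduces $\Gam_\Om(\lv+\nv)$ exactly. Second, once you have reduced to a function $\psi(t't)$ of the Gram matrix, the row-sign symmetry gives evenness for free, so your invariant-theory step is really only needed to guarantee that $\psi$ is smooth across the boundary of $\Omega$ --- which is indeed the point where the even Whitney extension (the paper's appeal to Treves) would otherwise break down.
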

 \begin{proof}  Consider the first integral.
 We set
 $ x=vt$, $ v \in
\vnm$,   $ t \in T_m$, and make use of Lemma \ref{sph}. By taking
into account that $x'x=t't$ and $t(t't)^{-1/2} \in O(m)$, owing to
(\ref{mku5}), we obtain \be\label{dis1} (r^\lv f, \phi)=
\intl_{\bbr^m_+} F(t_{1,1},\dots , t_{m,m}) \prod\limits_{j=1}^m
t_{j,j}^{\lam_j+n-j} dt_{j,j}\, , \ee where
\[
F(t_{1,1},\dots , t_{m,m}) = \intl_{\bbr^{m(m-1)/2}}
dt_*\intl_{\vnm} f(v)\overline{\phi(v t)}\,d v, \quad
dt_*=\prod\limits_{i<j} dt_{i,j}.\] Since $F$ extends as an even
Schwartz function in each argument, it can be written as
$
F(t_{1,1},\dots , t_{m,m}) =F_0(t^2_{1,1},\dots , t^2_{m,m})$,
where $F_0\in\S(\bbr^m)$ (use, e.g., Lemma 5.4 from \cite[p.
56]{Tr}). Replacing $t_{j,j}^2$ by $s_{j,j}$, we represent
(\ref{dis1}) as a direct product of one-dimensional distributions
\be\label{z-h}(r^\lv f, \phi)=(\prod\limits_{j=1}^m
(s_{j,j})_+^{(\lam_j+n-j-1)/2},\;  F_0(s_{1,1},\dots , s_{m,m})).\ee
It follows that  the the first integral in  (\ref{mlppi}) is absolutely convergent
provided $Re\,\lam_j>j-n-1$, i.e., $\lv \in {\bf\Lam}$. The
condition  $\lv \in {\bf\Lam}$ is strict. To see this, we choose
$f\equiv 1$ and $\phi(x)=e^{-{\rm tr} (x'x)}$ so that \be
\label{ex}(r^\lv f, \phi)=\intl_{\Ma} (x'x)^{\lv} \, e^{-{\rm tr}
(x'x)}\,dx=2^{-m}\sig_{n,m}\Gam_\Om(\lv+\nv).\ee Furthermore, since
$(s_{j,j})_+^{(\lam_j+n-j-1)/2}$ extends as a meromorphic
distribution with the only poles  $\lam_j= j-n-1, j-n-3, \dots \;$,
then, by the fundamental Hartogs theorem \cite {Sha}, the function $
\lv \to (r^\lv f, \phi)$ extends  meromorphically
with the  polar set ${\bf \Lam}_0$. By the same reason, a direct
product of the normalized distributions $$
(s_{j,j})_+^{(\lam_j+n-j-1)/2}/\Gam ((\lam_j+n-j+1)/2)$$ is an
entire function of $\lv$.

Let us consider the second integral in   (\ref{mlppi}). Changing variable
$x=y\om$, where $\om$ is a matrix from (\ref{ommp}),
we obtain $
r_*=(x'x)_*=(\om y'y\om)_*=y'y$. Hence (set $y=u\t, \; u \in \vnm,
\; \t \in T_m$),
\bea (r_*^\lv f, \phi)&=&\intl_{\Ma} (y'y)^{\lv}
\, f(y\om (\om
y' y \om)^{-1/2}) \, \overline{\phi(y\om)} \, dy \nonumber \\
&=& \intl_{\bbr^m_+} \Phi(\t_{1,1},\ldots , \t_{m,m})
\prod\limits_{j=1}^m \t_{j,j}^{\lam_{j}+n-j} d\t_{j,j}\,
\nonumber\eea where, as above (note that $\t\om \, (\om \t'\t
\om)^{-1/2} \in O(m)$), \bea \Phi(\t_{1,1},\ldots , \t_{m,m}) &=&
\intl_{\bbr^{m(m-1)/2}} d\t_*\intl_{\vnm} f(u) \, \overline{\phi(u
\t\om)}\,d u\nonumber \\
&=&\Phi_0(\t^2_{1,1},\dots , \t^2_{m,m}), \qquad
\Phi_0\in\S(\bbr^m). \nonumber \eea This gives
\[(r_*^\lv f, \phi)=(\prod\limits_{j=1}^m
(s_{j,j})_+^{(\lam_{j}+n-j-1)/2},\; \Phi_0(s_{1,1},\dots ,
s_{m,m})),\] and the result follows as in the previous case.
\end{proof}

 We introduce the following normalized extensions of $(A_{k,m} \vp)(v)$ and
 $(T_{k,m}^{\lv} \vp)(v)$ from  $\vnm$ to the ambient matrix space $\Ma$:
\bea\label{fil} (\tilde A_{k,m}^{\lv}  \vp)(x)&=&  \frac{ r_*^{-\lv_*-\nv}}{
\Gam_\Om(-\lv_*)} \, (A_{k,m} \vp)(v),\\(\tilde T_{k,m}^{\lv} \vp)(x)&=&\frac{ r^{\lv}}{
\Gam_\Om(\lv+\kv)} \, (T_{k,m}^{\lv} \vp)(v),\eea
where $x\in \Ma, \; x=vr^{1/2}, \; r=x'x\in \Omega, \; v \in \vnm$.

The next theorem plays a key role in the whole paper.
\begin{theorem}\label{gen} Let $\vp$ be an integrable right $O(m)$-invariant function on $\vnm$, $\phi\in
 \S(\Ma)$, $1\le m\le k \le n-1$. Then for every $ \lv\in\bbc^m$,
 \be\label{eq5}
(\tilde T_{k,m}^{\lv} \vp, \F \phi) \!=\!\check c \, (\tilde A_{k,m}^{\lv}  \vp, \phi), \quad
\check c \!=\!\frac{2^{nm+|\lv|}\, \pi^{nm/2}\,\Gam_m (n/2)}{\Gam_m (k/2)}\,,\ee
where both sides  are understood in the sense of analytic continuation.
\end{theorem}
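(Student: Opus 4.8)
The plan is to reduce the whole identity to the Khekalo--type Fourier relation (\ref{eq8}) applied in the \emph{smaller} matrix space $\frM_{k,m}$, and then to reassemble the result by a coarea identity on Stiefel manifolds that manufactures the complementary Radon transform $A_{k,m}$. Both sides of (\ref{eq5}) are built from the normalizations in (\ref{fil}), so by Lemma \ref{rexi} and Lemma \ref{l3.1} they extend, after division by the Siegel gamma factors $\Gam_\Om(\lv+\kv)$ and $\Gam_\Om(-\lv_*)$, to entire functions of $\lv$. Hence it suffices to prove (\ref{eq5}) on the open set $\{\lv:\ j-k-1<Re\,\lam_j<j-m,\ j=1,\dots,m\}$, which is nonempty because $m\le k$ (cf. the strip $\frL$ in Remark \ref{bnuk}); there all integrals below converge absolutely and Fubini applies, and the general case follows by analytic continuation.

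First I would rewrite the extended cosine transform as a single composite power on $\Ma$. Writing $x=vt$ with $t\in T_m$ as in Lemma \ref{sph} and using (\ref{pr6}), one has $(x'uu'x)^\lv=(t't)^\lv(v'uu'v)^\lv=r^\lv(v'uu'v)^\lv$, so that
\[
(\tilde T_{k,m}^\lv\vp)(x)=\frac{1}{\Gam_\Om(\lv+\kv)}\int_{\vnk}\vp(u)\,(x'uu'x)^\lv\,d_*u,
\]
the right $O(m)$-invariance of $\vp$ reconciling the triangular and polar normalizations used in (\ref{fil}). Interchanging the $u$- and $x$-integrations in $(\tilde T_{k,m}^\lv\vp,\F\phi)$, I fix $u$ and split $\Ma$ orthogonally along $\span(u)$ and $\span(u)^\perp$: with $x=uw+\bar u z$, $w\in\frM_{k,m}$, $z\in\frM_{n-k,m}$, one gets $u'x=w$ and $(x'uu'x)^\lv=(w'w)^\lv$. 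Integrating $\overline{\F\phi}$ over $z$ is a partial Fourier inversion (the projection--slice mechanism of Theorem \ref{CST}) and collapses the inner integral to
\[
\int_{\Ma}(x'uu'x)^\lv\,\overline{\F\phi(x)}\,dx=(2\pi)^{(n-k)m}\int_{\frM_{k,m}}(w'w)^\lv\,\overline{\tilde\F\psi_u(w)}\,dw,\qquad \psi_u(c):=\phi(uc)\in\S(\frM_{k,m}).
\]

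The right-hand integral is now exactly the left side of (\ref{eq8}) written for $\frM_{k,m}$ (replace $n$ by $k$, so that $\nv$ becomes $\kv$ and $c_\lv$ becomes $2^{km+|\lv|}\pi^{km/2}$). Applying that identity turns it into $\Gam_\Om(\lv+\kv)\,2^{km+|\lv|}\pi^{km/2}\int_{\frM_{k,m}}\Gam_\Om(-\lv_*)^{-1}(c'c)_*^{-\lv_*-\kv}\overline{\phi(uc)}\,dc$, and the prefactor $\Gam_\Om(\lv+\kv)$ cancels the normalization of $\tilde T$. The $u$-integration is then performed via the coarea identity
\[
\int_{\vnk}\vp(u)\int_{V_{k,m}}\Phi(uw)\,dw\,d_*u=\sig_{k,m}\int_{\vnm}(A_{k,m}\vp)(v)\,\Phi(v)\,d_*v,
\]
valid for every $\Phi$ on $\vnm$; it is proved by the same $O(n)$-integration as Lemma \ref{exix} (write $u=gu_0$, $w=\mu w_0$, substitute $g\mapsto g\tilde\mu$, and recognize the inner average of $\vp$ over all $k$-frames $u$ with $\span(u)\supseteq\{v\}$ as $A_{k,m}\vp$). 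Inserting the polar decomposition $c=w\rho^{1/2}$ in $\frM_{k,m}$ (Lemma \ref{l2.3}), so that $x=uc=(uw)\rho^{1/2}$ is the polar decomposition of a point of $\Ma$ with direction $v=uw$ and $x'x=\rho$, converts the accumulated $\vnk\times\frM_{k,m}$ integral into the polar form of $\check c\,(\tilde A_{k,m}^\lv\vp,\phi)$.

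The decisive step is the matching of radial weights. The $\frM_{k,m}$ computation produces $r_*^{-\lv_*-\kv}|r|^{(k-m-1)/2}$, while $(\tilde A_{k,m}^\lv\vp,\phi)$ in polar coordinates carries $r_*^{-\lv_*-\nv}|r|^{(n-m-1)/2}$; since $r_*^{\nv-\kv}=|r|^{(n-k)/2}$ by (\ref{det}), the surplus $|r|^{(n-k)/2}$ is absorbed exactly by $(n-m-1)/2-(k-m-1)/2=(n-k)/2$, so the two radial profiles coincide identically in $r$ and no polar set is introduced. The identity then collapses to the single scalar relation
\[
(2\pi)^{(n-k)m}\,2^{km+|\lv|}\,\pi^{km/2}\,\sig_{k,m}=\check c\,\sig_{n,m},
\]
which holds by (\ref{2.16}) and the stated value of $\check c$. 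The main obstacles I anticipate are precisely this coarea identity with its radial-weight bookkeeping (the shift of the exponent from $\kv$ to $\nv$), and the clean justification of the triangular-versus-polar reduction of $\tilde T_{k,m}^\lv\vp$; the reduction of the inner integral via Theorem \ref{CST} and the appeal to (\ref{eq8}) are then essentially automatic, the normalizations in (\ref{fil}) having been chosen to make the Siegel gamma factors cancel.
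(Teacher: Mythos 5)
Your proposal follows the paper's own proof essentially step for step: the same reduction to the nonempty strip $\frD=\frL\cap\tilde\frL$ with entirety of the normalized right-hand side supplied by Lemma \ref{l3.1}, the same orthogonal splitting of $x$ along $u$ leading to the Khekalo relation (\ref{eq8}) applied in $\frM_{k,m}$ together with the projection-slice theorem (\ref{4.20}), and the same $O(n)$-averaging (your ``coarea identity'' is exactly the paper's computation of $\psi(r)$) that manufactures $A_{k,m}\vp$, with matching radial-weight and constant bookkeeping. The only cosmetic difference is that you invoke the projection-slice theorem before (\ref{eq8}) rather than after, which is the same computation performed in a different order.
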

\begin{proof}  It suffices to show the following.

(a) Equality (\ref{eq5}) holds for all $ \lv$ in a certain domain $\frD \subset \bbc^m$,
where both sides of (\ref{eq5}) are analytic functions of $ \lv$;

(b) The right-hand side of (\ref{eq5}) extends from $\frD$ to all $ \lv\in\bbc^m$ as an entire function.

Let $\frD=\frL \cap \tilde\frL$, where
\bea  \frL&=&\{\lv\in\bbc^m :
Re\,\lam_j>j-k-1 \quad \text{\it for each} \quad j=1,\dots, m \},\nonumber\\
\tilde\frL&=&\{\lv\in\bbc^m :
Re\,\lam_j<j-m \quad \text{\it for each} \quad j=1,\dots, m \}\nonumber\eea
(note that  $\frD$ is nonempty !). Absolute convergence of  the integral
\[ (\tilde A_{k,m}^{\lv}  \vp,\, \phi)=\int_{\Ma} (\tilde A_{k,m}^{\lv}  \vp)(x) \overline{\phi (x)}\, dx\]
 for all $\lv \in \tilde\frL$ and extendability of this expression by analyticity to all $ \lv\in\bbc^m$
 follow from  Lemma \ref{l3.1}, owing to Lemma \ref {exix}: just change the notation in the second integral in (\ref{mlppi})  and make use of the equality $ (\lv_\ast)_j=\lam_{m-j+1}$ (the latter gives $Re\,\lam_j<j-m$). Absolute convergence and analyticity of  the left-hand side of (\ref{eq5}) for all $\lv \in \frL$ follow from Lemma \ref{l3.1} too, thanks to Lemma \ref {exi} (note that $\frL \subset  {\bf
\Lam}$, cf. (\ref{abs})).  Thus, it remains to prove (\ref{eq5})  for  $ \lv\in \frD$.

Assuming $\lv\in \frL$, by (\ref{pr6}) we have
\be \label {klop}I\equiv (\tilde T_{k,m}^{\lv} \vp, \F \phi)=\intl_{\vnk} \vp(u) J(u)\, d_*u,\ee
\[ J(u)=\frac{1}{\Gam_{\Omega} (\lv\!+\!\kv)}\intl_{\Ma}  (x'uu'x)^{\lv} \,\overline{(\F\phi) (x)}\,dx, \qquad u \in \vnk.\]
Let $ g_u$ be an orthogonal transformation that sends $u_0\!=\! \left[\begin{array} {c}  0 \\  I_{k} \end{array} \right]$ to $u\!\in \!\vnk$. We set $x=g_u\left[\begin{array} {c}  a \\  b \end{array} \right]$, $a\in \frM_{n-k,m}$, $b\in  \frM_{k,m}$. Then, by (\ref{4.9}),
\bea  J(u)&=&\frac{1}{\Gam_{\Omega} (\lv\!+\!\kv)}\intl_{\frM_{k,m}} (b'b)^{\lv} db\intl_{\frM_{n-k,m}}\overline{(\F\phi) \left(g_u \left[\begin{array} {c}  a \\  b \end{array} \right]\right )}\,da\nonumber\\
&=&\frac{1}{\Gam_{\Omega} (\lv\!+\!\kv)}\intl_{\frM_{k,m}} (b'b)^{\lv} \,\overline{(\R_k \F\phi) (u,b)}\, db.\nonumber\eea
Owing to (\ref{eq8})  and Remark \ref{bnuk} (we recall that $\lv  \in \frD$ !), the last expression can be written as
\bea  &&\frac{2^{km+|\lv|}\pi^{km/2}}{\Gam_{\Omega} (-\lv_*)}
\intl_{\frM_{k,m}} (b'b)_*^{-\lv_* -\kv} \,\overline{(\tilde \F^{-1}(\R_k \F\phi)(u, \cdot)) (b)}\, db\nonumber\\
&&=\frac{2^{|\lv|}\pi^{-km/2}}{\Gam_{\Omega} (-\lv_*)}\,\intl_{\frM_{k,m}} (b'b)_*^{-\lv_* -\kv} \,
\overline{(\tilde \F(\R_k \F\phi)(u, \cdot)) (b)}\, db.\nonumber\eea
By the Projection-Slice Theorem (see (\ref{4.20})),
\be\label {lpoi} (\tilde \F(\R_k \F\phi)(u, \cdot)) (b)=(\F\F\phi)(ub)=(2\pi)^{nm}\phi(-ub).\ee
Hence,
\[ J(u)\!=\!c_\lam \!\intl_{\frM_{k,m}}\!\!\!(b'b)_*^{-\lv_* -\kv} \,\overline{\phi(ub)}\, db, \quad c_\lam\!=\! \frac{2^{nm+|\lv|}\pi^{m(n-k/2)}}{\Gam_{\Omega} (-\lv_*)}.\]
This calculation enables us to transform (\ref{klop}) as follows:
\bea I&=&c_\lam \intl_{\vnk} \vp(u)  d_*u\intl_{\frM_{k,m}}\!\!\!(b'b)_*^{-\lv_* -\kv} \,\overline{\phi(ub)}\, db\nonumber\\
&=&2^{-m} c_\lam \intl_{\Omega} r_*^{-\lv_* -\kv} |r|^{(k-m-1)/2} \psi (r)\, dr, \nonumber\eea
where
\bea
\psi (r)&=&\intl_{V_{k,m}} d\eta\intl_{\vnk} \vp(u) \,\overline{\phi(u\eta r^{1/2})}\,  d_*u \quad \mbox {\rm (set $\eta_0=\left[\begin{array} {c}  0 \\  I_{m} \end{array} \right]\in V_{k,m}$)}\nonumber\\
&=&\sig_{k,m}\intl_{O(k)} d\gam\intl_{\vnk} \vp(u) \overline{\phi(u\gam\eta_0 r^{1/2})}\,  d_*u\nonumber\\
&=&\sig_{k,m}\intl_{\vnk} \vp(u) \,\overline{\phi(u\eta_0 r^{1/2})}\,  d_*u.\nonumber\eea
Using notation from (\ref{klop7}) and (\ref{klop8}), we continue
\bea
\psi (r)&=&\sig_{k,m}\intl_{O(n)} \vp(g u_0) \,\overline{\phi(g u_0\eta_0 r^{1/2})}\,dg\nonumber\\
&{}& \mbox{\rm (note that $u_0\eta_0=v_0=\left[\begin{array} {c}  0 \\  I_{m} \end{array} \right]\in V_{n,m}$ and $\check K_0 v_0=v_0$)}\nonumber\\
&=&\sig_{k,m}\intl_{O(n)} \vp(g u_0) \, dg\intl_{\check K_0} \overline{\phi(g \rho'v_0 r^{1/2})}\,  d\rho\nonumber\\
&=&\sig_{k,m}\intl_{\check K_0} d\rho\intl_{O(n)} \vp(g u_0)  \,\overline{\phi(g \rho'v_0 r^{1/2})}\,  dg\nonumber\\
&=&\sig_{k,m} \intl_{O(n)}\overline{\phi(\lam v_0 r^{1/2})}\,  d\lam
\intl_{\check K_0} \vp(\lam \rho  u_0) \, d\rho\nonumber\\
&=&\frac{\sig_{k,m}}{\sig_{n,m}} \,\intl_{\vnm} \overline{\phi(v r^{1/2})}\,  dv
\intl_{O(n-m)}\vp  \left (g_v \left[\begin{array} {cc} \del & 0 \\  0 & I_{m} \end{array} \right]
\left[\begin{array} {c}  0 \\  I_{k} \end{array} \right]\right )\, d\del\nonumber\\
&=&\label {45x} \frac{\sig_{k,m}}{\sig_{n,m}} \,\intl_{\vnm} \overline{\phi(v r^{1/2})}\, (A_{k,m}\vp)(v)\, dv.\eea

Hence (use (\ref{pr1}) and note that $|r|=|r_*|$),
\bea I&=&\frac{ c_\lam \sig_{k,m}}{2^{m}\sig_{n,m}} \intl_{\Omega} r_*^{-\lv_*} |r|^{-(m+1)/2}\, dr\intl_{\vnm} \overline{\phi(v r^{1/2})}\, (A_{k,m}\vp)(v)\, dv\nonumber\\
&=&\check c\,\intl_{\Ma} (\tilde A_{k,m}^{\lv}  \vp)(x)\, \overline{\phi(x)}\,dx, \nonumber\eea
as desired.
\end{proof}

The following corollaries hold in the  case
$\lam_1=\ldots=\lam_m=\a -k$. We denote
\be\label{132}
 (E_\lam f)(x)=|r|^{\lam/2} f(v), \qquad x=vr^{1/2}\in \Ma.\ee
\begin{corollary} \label {nkil} Let $\vp$ be an integrable right $O(k)$-invariant function on $\vnk$, $\phi\in
 \S(\Ma)$, $1\le m\le k \le n-1$. Then for every $ \a\in\bbc$,
 \be\label{eq5y}
 \left(\frac{E_{\a-k} \cd0 \vp}{\Gam_m (\a/2)}, \F \phi\right)=\check c_1 \, \left (\frac{E_{k-\a-n}  A_{k,m} \vp}{\Gam_m ((k-\a)/2)},  \phi\right),\ee
\[ \check c_1 =\frac{2^{m(n+\a-k)}\, \pi^{nm/2}\,\Gam_m (n/2)}{\Gam_m (k/2)},\]
where both sides  are understood in the sense of analytic continuation.
\end{corollary}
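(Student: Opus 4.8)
The plan is to read the statement off directly from Theorem~\ref{gen} by restricting the vector parameter $\lv$ to the constant diagonal $\lv=(\a-k,\dots,\a-k)$. The whole point is that on this diagonal every composite power function degenerates to an ordinary determinantal power through (\ref{det}), and the Siegel gamma factors $\Gam_\Om$ collapse to the scalar gamma functions $\Gam_m$ that appear in (\ref{eq5y}). Thus the proof is a bookkeeping specialization rather than a new argument: I would first note that the hypotheses of Theorem~\ref{gen} persist verbatim (the right $O(k)$-invariance of $\vp$ on $\vnk$ is exactly what the corollary assumes), and that analyticity in the scalar $\a$ is inherited from analyticity in $\lv$, so no fresh continuation argument is required and both sides may again be understood in the sense of analytic continuation.

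For the left-hand side I would use (\ref{det}) to compute $r^\lv=|r|^{(\a-k)/2}$ and $\Gam_\Om(\lv+\kv)=\Gam_\Om((\a,\dots,\a))=\Gam_m(\a/2)$, while the kernel becomes $(v'uu'v)^\lv=|v'uu'v|^{(\a-k)/2}$, so that $T_{k,m}^\lv\vp$ is exactly the dual cosine transform $\cd0\vp$ of (\ref{0mbyd}). Recalling the definition (\ref{132}) of $E_\lam$, this identifies $\tilde T_{k,m}^\lv\vp$ with $E_{\a-k}\cd0\vp/\Gam_m(\a/2)$, which is the integrand on the left of (\ref{eq5y}).

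For the right-hand side the only subtlety is the reversal $\lv\mapsto\lv_*$. On the constant diagonal one has $\lv_*=\lv$, whence $-\lv_*=(k-\a,\dots,k-\a)$ and $\Gam_\Om(-\lv_*)=\Gam_m((k-\a)/2)$ by (\ref{det}). The exponent $-\lv_*-\nv$ is again a constant vector, equal to $(k-\a-n,\dots,k-\a-n)$, and since $r_*\in\Omega$ the same identity (\ref{det}) applied to $r_*$, together with $|r_*|=|r|$ (immediate from the definition of $r_*$ in (\ref{ommp})), gives $r_*^{-\lv_*-\nv}=|r|^{(k-\a-n)/2}$. Hence $\tilde A_{k,m}^\lv\vp=E_{k-\a-n}A_{k,m}\vp/\Gam_m((k-\a)/2)$, matching the right of (\ref{eq5y}). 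Finally $|\lv|=m(\a-k)$, so the constant $\check c=2^{nm+|\lv|}\pi^{nm/2}\Gam_m(n/2)/\Gam_m(k/2)$ of (\ref{eq5}) becomes $2^{m(n+\a-k)}\pi^{nm/2}\Gam_m(n/2)/\Gam_m(k/2)=\check c_1$. The main---indeed the only---obstacle is notational: keeping the reversal operation and the identity $|r_*|=|r|$ straight, so that the two gamma factors land on $\Gam_m(\a/2)$ and $\Gam_m((k-\a)/2)$ respectively rather than being interchanged.
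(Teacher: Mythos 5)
Your proposal is correct and is exactly the route the paper intends: the paper derives Corollary \ref{nkil} by specializing Theorem \ref{gen} to the constant diagonal $\lv=(\a-k,\dots,\a-k)$, using (\ref{det}) to collapse the composite power functions and $\Gam_\Om$ to determinantal powers and $\Gam_m$, and noting that the reversal $\lv_*$ and $r_*$ act trivially there (with $|r_*|=|r|$). Your bookkeeping of the two gamma factors and of the constant $\check c_1=2^{m(n+\a-k)}\pi^{nm/2}\Gam_m(n/2)/\Gam_m(k/2)$ checks out.
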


In particular, if $k=m$, the following statement holds for the cosine transform
\[(M^{\a} f)(u)=\int_{\vnm} \!\!\!f(v)\,
|v'uu'v|^{(\a-m)/2} \, d_*v.\]
\begin{corollary} Let $f$ be an integrable right $O(m)$-invariant function on $\vnm$, $\phi\in
 \S(\Ma)$, $1\!\le\! m\! \le\! n\!-\!1$. Then for every $ \a\in\bbc$,
 \be\label{eq5ya}
 \left(\frac{E_{\a-m} M^{\a} f}{\Gam_m (\a/2)}, \F \phi\right)=\check c_2 \, \left (\frac{E_{m-\a-n}  f}{\Gam_m ((m-\a)/2)},  \phi\right),\ee
\[ \check c_2 =\frac{2^{m(n+\a-m)}\, \pi^{nm/2}\,\Gam_m (n/2)}{\Gam_m (m/2)},\]
where both sides   are understood in the sense of analytic continuation.
\end{corollary}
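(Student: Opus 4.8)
The plan is to obtain this statement directly as the special case $k=m$ of Corollary \ref{nkil}, which has already been proved. When $k=m$ the two sides of (\ref{eq5y}) collapse termwise onto the two sides of (\ref{eq5ya}), so the only work is to confirm three identifications and the value of the constant. First I would specialize the operators. Setting $k=m$ in the dual cosine transform (\ref{0mbyd}) gives
\[(\cd0\vp)(v)=\int_{\vnm}\vp(u)\,|v'uu'v|^{(\a-m)/2}\,d_*u=(M^\a\vp)(v),\]
so with $\vp=f$ the operator $\cd0$ becomes exactly $M^\a$ on $\vnm$; note that the hypothesis ``$\vp$ right $O(k)$-invariant'' reduces precisely to the right $O(m)$-invariance of $f$. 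Next, by the definition (\ref{akm}) of the complementary Radon transform, the frame space $V_{n-m,\,k-m}$ degenerates to the one-point space $V_{n-m,0}$ when $k=m$, the block matrix reduces to $\left[\begin{array}{c} 0 \\ I_m\end{array}\right]=v_0$, and hence $(A_{m,m}\vp)(v)=\vp(g_v v_0)=\vp(v)$. Thus $A_{m,m}=\mathrm{Id}$, recovering the remark already noted after (\ref{akm}).

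With these identifications in hand I would substitute $k=m$ into the normalized extensions (\ref{fil}) and into (\ref{132}). The left-hand normalization $E_{\a-k}\cd0\vp/\Gam_m(\a/2)$ becomes $E_{\a-m}M^\a f/\Gam_m(\a/2)$, and the right-hand normalization $E_{k-\a-n}A_{k,m}\vp/\Gam_m((k-\a)/2)$ becomes $E_{m-\a-n}f/\Gam_m((m-\a)/2)$, which are exactly the two arguments appearing in (\ref{eq5ya}). Finally I would check the constant: at $k=m$,
\[\check c_1=\frac{2^{m(n+\a-k)}\,\pi^{nm/2}\,\Gam_m(n/2)}{\Gam_m(k/2)}\;\longrightarrow\;\frac{2^{m(n+\a-m)}\,\pi^{nm/2}\,\Gam_m(n/2)}{\Gam_m(m/2)}=\check c_2,\]
as claimed.

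There is no genuine obstacle here: the statement is a pure specialization, and the only point deserving care is that the equality in (\ref{eq5y}) holds in the sense of analytic continuation in $\a$. Since for every fixed admissible $k$ both sides of (\ref{eq5y}) are meromorphic in $\a$ (through Lemma \ref{l3.1} and Theorem \ref{exi}, as used in the proof of Corollary \ref{nkil}), and since the two identifications $\cd0=M^\a$ and $A_{m,m}=\mathrm{Id}$ are exact operator equalities rather than limiting relations, the analytic continuation established on the left of (\ref{eq5y}) transfers verbatim to (\ref{eq5ya}), which completes the argument. (Alternatively, one could bypass Corollary \ref{nkil} and read the result off Theorem \ref{gen} directly by taking $k=m$ together with the diagonal specialization $\lam_1=\dots=\lam_m=\a-m$, using $(\cdot)^{\lv_0}=|\cdot|^{\lam/2}$ from (\ref{det}); this yields the same conclusion.)
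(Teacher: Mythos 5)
Your proposal is correct and follows exactly the route the paper intends: the paper states this corollary as the immediate $k=m$ specialization of Corollary \ref{nkil} (with no further argument), and your verification that $\cd0$ reduces to $M^\a$, that $A_{m,m}$ is the identity, and that $\check c_1$ specializes to $\check c_2$ supplies precisely the details being taken for granted there.
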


\begin{remark} {\rm For better understanding  of (\ref{eq5y}) (and  also (\ref{eq5ya})) we note that the
 domains, where the left-hand side and
the right-hand side  of this equality exist as absolutely
convergent integrals, have no points in common, when  $m>1$. To  implement analytic continuation we had to  switch from  $\a\in \bbc$ to  $\lv\in \bbc^m$.}
\end{remark}

\begin{corollary} \label{cr1}   Let $\vp$ be an integrable right $O(k)$-invariant function on $\vnk$, $1\le m\le k \le n-1$.
Then for any  $\om \in C^\infty (\vnm)$, the function
$$\z_*(\a)= \Big(\frac{1}{\Gam_m (\a/2)}\, \cd0 \vp, \om\Big)$$ extends  to all $\a\in \bbc$ as an entire function.
\end{corollary}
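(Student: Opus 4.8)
The plan is to test the Fourier functional equation of Corollary \ref{nkil} against a suitably chosen family of Schwartz functions, and then to localize in the parameter $\a$.

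First I would record that the right-hand side of (\ref{eq5y}) is entire in $\a$ for every $\phi\in\S(\Ma)$. Indeed, by Lemma \ref{exix} the function $f:=A_{k,m}\vp$ lies in $L^1(\vnm)$, and writing the diagonal vector $\mv_0=(k-\a-n,\dots,k-\a-n)$ we have $E_{k-\a-n}f=r^{\mv_0}f$ and $\Gam_m((k-\a)/2)=\Gam_\Om(\mv_0+\nv)$ by (\ref{det}). Hence the right-hand side of (\ref{eq5y}) equals $\check c_1\,(r^{\mv_0}f,\phi)/\Gam_\Om(\mv_0+\nv)$, which by Lemma \ref{l3.1} is entire in the vector variable $\mv_0\in\bbc^m$; restricting along the affine line $\a\mapsto\mv_0$ and multiplying by the entire factor $\check c_1=2^{m(n+\a-k)}\pi^{nm/2}\Gam_m(n/2)/\Gam_m(k/2)$ shows it is entire in $\a$. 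Consequently the left-hand side $\z_\phi(\a):=\big(\tfrac{E_{\a-k}\cd0\vp}{\Gam_m(\a/2)},\F\phi\big)$ continues to an entire function of $\a$. Since $\F$ maps $\S(\Ma)$ onto itself, the same conclusion holds with $\F\phi$ replaced by an arbitrary $\Psi\in\S(\Ma)$; I denote the resulting entire function by $\z_\Psi(\a)$.

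Next I would reduce the $\Ma$-pairing to a pairing over $\vnm$ by choosing $\Psi$ of product form. Fix a real $\eta\in C_c^\infty(\Omega)$ whose support is a compact subset of the interior of $\Omega$, and set $\Psi(x)=\eta(r)\,\om(v)$ in the polar coordinates $x=vr^{1/2}$ of Lemma \ref{l2.3}. Because $\supp\eta$ is bounded away from the boundary of $\bar\Omega$ and from $0$ and $\infty$, the matrix $x$ stays in a compact set of full-rank matrices on $\supp\Psi$, so $v=x(x'x)^{-1/2}$ depends smoothly on $x$ there and $\Psi\in C_c^\infty(\Ma)\subset\S(\Ma)$. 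Using $dx=2^{-m}|r|^{(n-m-1)/2}dr\,dv$ and $dv=\sig_{n,m}\,d_*v$, a direct computation gives, for $Re\,\a>m-1$ (where all integrals converge absolutely by Theorem \ref{exi} and Fubini applies),
\[
\z_\Psi(\a)=\tilde c_\eta(\a)\,\sig_{n,m}\,\z_*(\a),\qquad \tilde c_\eta(\a)=2^{-m}\intl_{\Omega}|r|^{(\a-k)/2+(n-m-1)/2}\,\eta(r)\,dr,
\]
and $\tilde c_\eta(\a)$ is entire in $\a$ since $|r|$ is bounded above and below on $\supp\eta$.

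Finally I would localize. The ratio $\z_\Psi(\a)/(\tilde c_\eta(\a)\,\sig_{n,m})$ is a single meromorphic function on $\bbc$ that agrees with the classical $\z_*(\a)$ on an open subset of $\{Re\,\a>m-1\}$ (choosing $\eta\ge0$ makes $\tilde c_\eta$ positive for large real $\a$), so it is the unique meromorphic continuation of $\z_*$ and is holomorphic wherever $\tilde c_\eta\neq0$. For a fixed $\a_0\in\bbc$, taking $\eta$ to be an approximate identity concentrated at an interior point $r_0\in\Omega$ forces $\tilde c_\eta(\a_0)\to 2^{-m}|r_0|^{(\a_0-k)/2+(n-m-1)/2}\neq0$, so some admissible $\eta$ has $\tilde c_\eta(\a_0)\neq0$, whence the continuation is holomorphic at $\a_0$. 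As $\a_0$ is arbitrary, $\z_*$ is entire. The main obstacle is exactly this last point: the $\Psi$ that would reproduce $\Gam_m(\a/2)$ in the radial integral and yield $\z_*$ directly requires the singular weight $|r|^{(k-n)/2}$, which is not Schwartz, so one cannot hit $\z_*$ with a single test function; the device of compactly supported radial profiles together with localization in $\a$ circumvents the singularity, at the cost of controlling the zeros of $\tilde c_\eta$ only one point at a time.
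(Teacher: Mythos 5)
Your argument is correct and follows essentially the same route as the paper: test the functional equation (\ref{eq5y}) against product test functions $\psi(r)\,\om(v)$ with radial profile supported away from $\partial\Omega$, factor out the radial integral $h_\psi(\a)$, and invoke uniqueness of analytic continuation together with the entirety of the right-hand side of (\ref{eq5y}). The only difference is that you spell out what the paper compresses into the phrase ``independent of $\psi$'', namely that a possible zero of $h_\psi$ at a given $\a_0$ is avoided by varying $\psi$ (your approximate-identity localization), and that the product-form test function is indeed Schwartz.
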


\begin{proof} Let $\psi (r)$ be a nonnegative $C^\infty$ function with compact support away from the boundary of the cone $\Omega$. Choose $\F\phi$ in
(\ref{eq5y}) so that $(\F\phi)(x)=\psi (r)\om (v)$, $x=vr^{1/2}$, $v\in \vnm$.
Then  for $Re \, \a > m-1$, the left-hand side of (\ref{eq5y}) becomes $\z_*(\a)h_\psi (\a)$,
\[ \qquad h_\psi
 (\a)=2^{-m}\int_{\Omega} |r|^{(n+\a-m-k-1)/2} \overline{\psi (r)}\,dr.\]
This gives \be\label{124} a.c.\, \z_*(\a)=a.c.\, \frac{1}{h_\psi (\a)}\, \left(\frac{E_{\a-k} \cd0 \vp}{\Gam_m (\a/2)}, \psi \om\right).\ee
Owing to uniqueness of analytic continuation and Corollary \ref{nkil}, the right-hand side of
(\ref{124}) is well defined and independent of $\psi$.
\end{proof}

The following statement holds by duality.

\begin{corollary} \label{cr12}   Let $f$ be an infinitely differentiable right $O(m)$-invariant function on $\vnm$, $1\le m\le k \le n-1$.
Then for any  $\om \in C^\infty (\vnm)$, the function
$$\z(\a)= \Big(\frac{1}{\Gam_m (\a/2)}\, \C_{m,k}^\a f, \om\Big)$$ extends  to all $\a\in \bbc$ as an entire function.
\end{corollary}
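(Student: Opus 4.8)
The plan is to deduce this from Corollary \ref{cr1} by duality, interchanging the roles of the data function and the test function. Since $\C^{\a}_{m, k} f$ is a function on $\vnk$, the test function $\om$ and the pairing defining $\z(\a)$ are to be understood on $\vnk$; throughout I suppress the complex conjugation in the second slot, which is harmless here because it affects neither $f$ nor $\om$, and hence does not affect holomorphy in $\a$.

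First I would use the fact that $\C^{\a}_{m, k}$ and $\cd0$ are built from the common kernel $|v'uu'v|^{(\a-k)/2}$. For $Re\,\a>m-1$, where Theorem \ref{exi} guarantees absolute convergence of both integrals, Fubini's theorem yields the duality
\be\label{crdual}
\intl_{\vnk}(\C^{\a}_{m, k} f)(u)\,\om(u)\,d_*u=\intl_{\vnm} f(v)\,(\cd0 \om)(v)\,d_*v ,
\ee
valid for every integrable $f$ and $\om$. Thus, on this half-plane, $\z(\a)$ equals the pairing of $\frac{1}{\Gam_m (\a/2)}\,\cd0 \om$ against $f$ over $\vnm$.

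Next I would observe that $\C^{\a}_{m, k} f$ is right $O(k)$-invariant, so the left-hand side of (\ref{crdual}) is unchanged if $\om$ is replaced by its $O(k)$-average $\tilde\om(u)=\intl_{O(k)}\om(u\gam)\,d\gam$, which is smooth, hence integrable, and right $O(k)$-invariant. Consequently, for $Re\,\a>m-1$,
\[\z(\a)=\Big(\frac{1}{\Gam_m (\a/2)}\,\cd0 \tilde\om,\; f\Big),\]
which is exactly the quantity analyzed in Corollary \ref{cr1}, now with data $\vp=\tilde\om$ (an integrable, right $O(k)$-invariant function on $\vnk$) and smooth test function $f\in C^\infty(\vnm)$. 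Corollary \ref{cr1} asserts that the right-hand side extends to all $\a\in\bbc$ as an entire function; by uniqueness of analytic continuation, so does $\z(\a)$.

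The only step that needs genuine care is the verification of the hypotheses of Corollary \ref{cr1} after the interchange of roles, in particular that the data function may be taken right $O(k)$-invariant. This is precisely what the averaging step provides, and it rests on the right $O(k)$-invariance of $\C^{\a}_{m, k} f$. The normalizing factor $1/\Gam_m (\a/2)$ is common to $\z(\a)$ and to the expression treated in Corollary \ref{cr1}, so it requires no separate discussion.
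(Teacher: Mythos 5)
Your proof is correct and follows the same route the paper intends: the paper disposes of this corollary with the single remark that it ``holds by duality'' with Corollary \ref{cr1}, and your argument is exactly that duality, carried out with the two details the paper leaves implicit (Fubini on the half-plane $Re\,\a>m-1$ and the $O(k)$-averaging of the test function so that Corollary \ref{cr1} applies). Your observation that the pairing must be read on $\vnk$ is also the right reading of the statement.
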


\subsubsection{Remarks and conjectures}${}$\hfil

{\bf 1.} We believe that Corollary \ref{cr12} holds for every $f\in L^1(\vnm)$. This would follow from our next conjecture.

{\bf 2.} It is natural to expect that for infinitely differentiable  $f$ and $\vp$, the normalized functions
\be \a \to \frac{1}{\Gam_m (\a/2)}\, (\C_{m,k}^\a f)(u),\qquad \a \to\frac{1}{\Gam_m (\a/2)}\,(\cd0 \vp)(v)\ee
extend as entire functions of $\a$ pointwise, that is, for every $u$ and $v$, and, moreover, these extensions are $C^\infty$ functions of $u$ and $v$, respectively. We suppose that the proof of this fact can be given by using decomposition in Stiefel/Grassmann harmonics (see, e.g, \cite {Ge, Str75, Str86, TT} for this theory).

{\bf 3.} Analytic continuation of integrals, like $\C_{m,k}^\a f$, was briefly discussed in \cite[p. 368]{GGR}, where it was suggested to replace such integrals by those over the matrix space $\frM_{m,n-m}$, so that $\dim \frM_{m,n-m}=m(n-m)=\dim G_{n,m}$. However, the corresponding calculations were skipped in that paper. Our impression is that the actual procedure is  more complicated; see  Section \ref  {bhub}
for further discussion.

\subsection{A functional equation for the dual Funk  transform}
The next theorem  specifies Corollary \ref{nkil} for $\a=0$. According to this theorem,  the dual Funk  transform $\fd$ can be regarded  as a member of the  analytic family $\{\cd0\}$ if the latter is suitably normalized.

\begin{theorem} \label {nkild} Let $\vp$ be an integrable right $O(m)$-invariant function on $\vnm$, $\phi\in
 \S(\Ma)$, $1\le m\le k \le n-1$. Then
 \be\label{eq5yq}
(E_{-k}\fd \vp, \F \phi)=\tilde c \, (E_{k-n}  A_{k,m} \vp,  \phi),\ee
where $A_{k,m}$ is the operator (\ref{akm}),
\[ \tilde c =\frac{ 2^{m(n-k)}\, \pi^{nm/2}\, \Gam_m ((n-k)/2)\,}{\Gam_m (k/2)}.\]
\end{theorem}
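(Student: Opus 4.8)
The plan is to read (\ref{eq5yq}) off from Corollary~\ref{nkil} at $\a=0$. Throughout I take $\vp\in L^1(\vnk)$ right $O(k)$-invariant, as in Corollary~\ref{nkil} (this is what $\fd$ and $A_{k,m}$ require). On the right-hand side of (\ref{eq5y}) nothing is singular at $\a=0$: $E_{k-\a-n}A_{k,m}\vp\to E_{k-n}A_{k,m}\vp$, $\Gam_m((k-\a)/2)\to\Gam_m(k/2)$ (finite and nonzero, since $k\ge m$), and $\check c_1|_{\a=0}=2^{m(n-k)}\pi^{nm/2}\Gam_m(n/2)/\Gam_m(k/2)$. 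Hence the theorem follows once I identify the analytically continued left-hand side at $\a=0$, i.e. once I prove the single new ingredient
\be\label{crux}
\underset{\a=0}{a.c.}\ \frac{1}{\Gam_m(\a/2)}\,\cd0\vp=C\,\fd\vp,\qquad C=\frac{\Gam_m(n/2)}{\Gam_m(k/2)\,\Gam_m((n-k)/2)},
\ee
understood, as in Corollary~\ref{cr1}, after pairing with $\om\in C^\infty(\vnm)$ and separating variables through $\F\phi(vr^{1/2})=\psi(r)\om(v)$. Granting (\ref{crux}), the factor $C$ converts $\check c_1|_{\a=0}/\Gam_m(k/2)$ into $\tilde c$ via the elementary simplification $\Gam_m(n/2)/(C\,\Gam_m(k/2)^2)=\Gam_m((n-k)/2)/\Gam_m(k/2)$.

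To prove (\ref{crux}) I fix $v$ and use $O(n)$-invariance to take $v=\check v_0$ from (\ref{klop8}), so that $\check v_0'u$ is the top $m\times k$ block $d$ of $u\in\vnk$ and $\det(\check v_0'uu'\check v_0)=\det(dd')$. Peeling off these $m$ rows by the bi-Stiefel decomposition (Lemma~\ref{l2.1}) gives
$$(\cd0\vp)(\check v_0)=\frac{1}{\sig_{n,k}}\intl_{0<d'd<I_k}\!\!|I_k-d'd|^{(n-m-k-1)/2}\det(dd')^{(\a-k)/2}\,\Phi(d)\,dd,$$
where $\Phi(d)=\int_{V_{n-m,k}}\vp([d;\,w(I_k-d'd)^{1/2}])\,dw$. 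Because $\vp$ is right $O(k)$-invariant, $\Phi(d)$ depends on $d$ only through $s=dd'$, say $\Phi(d)=\Phi_0(s)$; using $|I_k-d'd|=|I_m-s|$ and the polar coordinates of Lemma~\ref{l2.3} applied to $d'\in\frM_{k,m}$ (with $\int_{V_{k,m}}dv=\sig_{k,m}$), the matrix integral collapses to one over the cone $\Omega=\p$:
$$(\cd0\vp)(\check v_0)=\frac{\sig_{k,m}}{2^m\sig_{n,k}}\intl_0^{I_m}|s|^{(\a-m-1)/2}\,|I_m-s|^{(n-m-k-1)/2}\,\Phi_0(s)\,ds.$$

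Now I continue in $\a$ and set $\a=0$. Writing $(\a-m-1)/2=\a/2-d$ with $d=(m+1)/2$, the decisive fact is that the normalized cone density $\Gam_m(\a/2)^{-1}|s|^{\a/2-d}$, supported on $\bar\Omega$, continues at $\a=0$ to the Dirac mass $\delta_0$ on $\S_m$. Indeed the corresponding functional annihilates every $H$ vanishing near $s=0$ (there the integral is entire while $\Gam_m(\a/2)^{-1}$ vanishes at $\a=0$), hence is supported at the origin; it is homogeneous of degree $-\dim\S_m$, the degree of $\delta_0$, which excludes derivatives of $\delta_0$; and the beta integral (\ref{2.6d}) with $a=0$, $b=I_m$ evaluates its weight to $1$. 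Applying this with $H(s)=|I_m-s|^{(n-m-k-1)/2}\Phi_0(s)$ yields $H(0)=\Phi_0(0)=\int_{V_{n-m,k}}\vp([0;w])\,dw=\sig_{n-m,k}\,(\fd\vp)(\check v_0)$, so the left side of (\ref{crux}) equals $(\sig_{k,m}\sig_{n-m,k}/2^m\sig_{n,k})\,\fd\vp$. By (\ref{2.16}) this constant is $\Gam_k(n/2)/(\Gam_m(k/2)\,\Gam_k((n-m)/2))$, which equals $C$ because $\Gam_k(n/2)\Gam_m((n-k)/2)$ and $\Gam_m(n/2)\Gam_k((n-m)/2)$ both telescope to $\pi^{[k(k-1)+m(m-1)]/4}\prod_{l=0}^{k+m-1}\Gam((n-l)/2)$.

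The main obstacle is the Dirac-at-the-origin step: $\Gam_m(\a/2)$ has a pole of order $\lceil m/2\rceil$ at $\a=0$, so a priori the residue of the cone integral could carry derivatives of $H$ at $s=0$. The homogeneity argument, pinned numerically by the exact beta integral (\ref{2.6d}), is precisely what excludes this and fixes the weight at $H(0)$; the remaining bookkeeping, namely the separation of variables needed to make sense of (\ref{crux}) for merely integrable $\vp$ and the gamma-telescoping, is routine.
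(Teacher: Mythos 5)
Your route is genuinely different from the paper's: you try to deduce the theorem from Corollary \ref{nkil} by computing the analytic continuation of $\Gam_m(\a/2)^{-1}\cd0\vp$ at $\a=0$ directly on the cone, whereas the paper carries out the whole computation in the ambient space $\Ma$, converting the critical value $\a=0$ into the statement $h_0=\delta$ for the flat Riesz distribution on $\frM_{k,m}$ (Theorem \ref{m76}), applied to the Schwartz function $(\R_k\F\phi)(u,\cdot)$ via the projection--slice theorem, and then reusing the computation (\ref{45x}) to produce $A_{k,m}\vp$. Your bookkeeping is correct: the bi-Stiefel reduction of $(\cd0\vp)(\check v_0)$ to an integral over $0<s<I_m$, the constant $C$, and the gamma-telescoping all check out, and the identity $\underset{\lam=0}{a.c.}\;\Gam_m(\lam)^{-1}|s|^{\lam-d}\chi_{\bar\Omega}=\delta_0$ is indeed true (it is the classical fact $R_0=\delta_0$ for Riesz distributions on a symmetric cone, provable from the Laplace transform (\ref{eq11}) and the convolution semigroup property).

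However, your proof of that central identity has a genuine gap, in two places. First, the support argument rests on a false premise: for $H$ vanishing near $s=0$ but not near the rest of $\partial\Omega$, the integral $\int_\Omega |s|^{\a/2-d}H(s)\,ds$ is \emph{not} entire when $m\ge 2$ --- the kernel is singular along the whole boundary $\{\det s=0\}$, and the lower-rank strata away from the origin contribute poles; for $m=3$ the rank-two stratum already produces a pole at $\a=0$ itself. So the vanishing of $\Gam_m(\a/2)^{-1}$ at $\a=0$ does not by itself kill these contributions; one must compare pole orders stratum by stratum (or argue via the Laplace transform), and that is precisely the nontrivial content being assumed. Second, and more seriously, you apply the delta identity to $H(s)=|I_m-s|^{(n-m-k-1)/2}\Phi_0(s)$ with $\Phi_0$ built from a merely integrable $\vp$: neither the value $H(0)$ nor the continuation through the boundary poles is defined for such $H$, and the continuity of $\Phi_0$ at $s=0$ (i.e., of the fiber averages of $\vp$ as $s\to 0$) is exactly what is not available. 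The paper explicitly lists the pointwise analytic continuation of $\a\mapsto(\cd0\vp)(v)$ as an open problem even for smooth data; its proof of Theorem \ref{nkild} is engineered so that the delta-at-the-origin step is only ever applied to the Schwartz function coming from $\phi$, while $\vp$ enters through absolutely convergent integrals. A smaller point: passing from the product case $\F\phi=\psi(r)\om(v)$, $\psi$ supported away from $\partial\Omega$, to arbitrary $\phi\in\S(\Ma)$ in (\ref{eq5yq}) needs an explicit density/local-integrability argument.
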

\begin{proof} Passing to polar coordinates, we obtain
\[(E_{-k}\fd \vp, \F \phi)=2^{-m}\sig_{n,m}\intl_{\vnm}(\fd \vp)(v)\,h (v)\, d_*v,\]
\[h (v)=\intl_{\Omega}\overline{(\F \phi)(vr^{1/2})}\,|r|^{(n-k-m-1)/2} \,dr.\]
Hence, by  duality (\ref{009a}),
\be\label {op8m} (E_{-k}\fd \vp, \F \phi)=2^{-m}\sig_{n,m}\intl_{\vnk} \vp (u) (F_{m,k}h)(u)\, d_*u,\ee
where, by (\ref{876a}),
\bea (F_{m,k}h)(u)&=&\intl_{V_{n-k,m}} \!\!\!\!\!h\left(g_u
\left[\begin{array} {c} \om \\0 \end{array} \right]\right)\,d_*\om\nonumber\\
&=&\intl_{V_{n-k,m}}d_*\om\intl_{\Omega}\overline{(\F \phi)\left(g_u
\left[\begin{array} {c} \om \\0 \end{array} \right]r^{1/2}\right)}
\,|r|^{(n-k-m-1)/2} \,dr\nonumber\\
&=&\frac{2^m}{\sig_{n-k,m}}\intl_{\frM_{n-k,m}}\overline{(\F \phi)\left(g_u
\left[\begin{array} {c} y \\0 \end{array} \right]\right)}\,dy\nonumber\\&=&\frac{2^m}{\sig_{n-k,m}}\, \overline{(\R_k \F \phi)(u,0)}.\nonumber\eea
The last expression  can be regarded as analytic continuation of the Riesz distribution (\ref{rd}), so that
\[(F_{m,k}h)(u)=\frac{2^m}{\sig_{n-k,m}}  \,\underset
{\a=0}{a.c.} \, \frac{1}{\gam_{k,m} (\a)}
  \intl_{\frM_{k,m}} \overline{(\R_k \F \phi)(u,z)}\,|z|_m^{\a-k}  \,dz.\]
 Now (\ref {fou}) yields
\[(F_{m,k}h)(u)=\frac{2^{m-km}\pi^{-km}}{\sig_{n-k,m}}\,  \underset
{\a=0}{a.c.} \,
  \intl_{\frM_{k,m}} \overline{\tilde \F [(R_k \F \phi)(u,\cdot)](y)}\,|y|_m^{-\a}  \,dy.\]
 Hence, by (\ref{lpoi}),
 \[(F_{m,k}h)(u)=\frac{2^{m(1-k+n)}\pi^{m(n-k)}}{\sig_{n-k,m}}\,
  \intl_{\frM_{k,m}} \overline{\phi (uy)}\, dy.\]
Using (\ref{op8m}) and passing to polar coordinates, we obtain
\bea (E_{-k}\fd \vp, \F \phi)&=&\!\frac{(2\pi)^{nm-km}\, \sig_{n,m}}{\sig_{n-k,m}}\,\intl_{\vnk} \!\vp (u)  d_*u\!\!\intl_{\frM_{k,m}} \!\overline{\phi (uy)}\, dy\nonumber\\
&=&\! \frac{(2\pi)^{nm-km} \sig_{n,m}}{2^m\,\sig_{n-k,m}\, }\, \intl_{\Omega} \!|r|^{(k-m-1)/2}\, \psi (r)\, dr, \nonumber\eea
where, by (\ref{45x}),
\[\psi (r)= \frac{\sig_{k,m}}{\sig_{n,m}} \,\intl_{\vnm} \overline{\phi(v r^{1/2})}\, (A_{k,m}\vp)(v)\, dv.\]
Setting $x=vr^{1/2}$, we arrive at (\ref{eq5yq}), as desired.
\end{proof}
\begin{corollary} \label {nkilf} Let $\vp$ be an integrable right $O(k)$-invariant function on $\vnk$, $\phi\in
 \S(\Ma)$, $1\le m \le k\le n-1$. Then
 \be\label{ceq5yf}
(E_{-k} \fd \vp, \phi)=d _0 \,  \underset
{\a=0}{a.c.} \,\left (\frac{E_{\a -k}  \cd0 \vp}{\Gam_{m} (\a/2)},  \phi\right),\ee
\[ \tilde d_0 =\frac{\Gam_m ((n-k)/2)\Gam_{m} (k/2)\,}{\Gam_m (n/2)}.\]
\end{corollary}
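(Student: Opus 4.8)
The plan is to deduce this corollary by combining Theorem \ref{nkild} with Corollary \ref{nkil} specialized to $\a=0$, routing both sides of (\ref{ceq5yf}) through the common intermediate object $E_{k-n} A_{k,m}\vp$. The guiding observation is that each side, read as a distribution on $\Ma$ tested against $\phi$, becomes transparent once we test it against $\F\phi$ instead. Since the Fourier transform is a bijection of $\S(\Ma)$ onto itself, it suffices to verify the asserted identity with $\phi$ replaced everywhere by $\F\phi$, and then recover the stated pairing because $\{\F\phi:\phi\in\S(\Ma)\}=\S(\Ma)$.

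First I would handle the left-hand side. Testing $E_{-k}\fd\vp$ against $\F\phi$ is precisely the content of Theorem \ref{nkild}: by (\ref{eq5yq}),
\[
(E_{-k}\fd\vp,\F\phi)=\tilde c\,(E_{k-n} A_{k,m}\vp,\phi),\qquad \tilde c=\frac{2^{m(n-k)}\pi^{nm/2}\Gam_m((n-k)/2)}{\Gam_m(k/2)},
\]
so that the left-hand side is, up to the explicit constant $\tilde c$, the pairing of $E_{k-n} A_{k,m}\vp$ with $\phi$.

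Next I would treat the right-hand side. Corollary \ref{nkil}, equation (\ref{eq5y}), is an identity of analytically continued functions of $\a$; tested against $\F\phi$ it reads
\[
\left(\frac{E_{\a-k}\cd0\vp}{\Gam_m(\a/2)},\F\phi\right)=\check c_1\left(\frac{E_{k-\a-n} A_{k,m}\vp}{\Gam_m((k-\a)/2)},\phi\right),\qquad \check c_1=\frac{2^{m(n+\a-k)}\pi^{nm/2}\Gam_m(n/2)}{\Gam_m(k/2)}.
\]
The crucial point is that the right-hand side here is regular at $\a=0$: indeed $A_{k,m}\vp\in L^1(\vnm)$ by Lemma \ref{exix}, so $E_{k-\a-n} A_{k,m}\vp$ is the diagonal homogeneous extension governed by Lemma \ref{l3.1}, and its normalization by $\Gam_m((k-\a)/2)$ — which is exactly $\Gam_\Om(\lv_0+\nv)$ for $\lv_0=(k-\a-n,\dots,k-\a-n)$ — makes it an entire function of $\a$. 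Hence the analytic continuation of the left-hand side at $\a=0$ exists and equals the value of the right-hand side there; setting $\a=0$ gives
\[
\underset{\a=0}{a.c.}\left(\frac{E_{\a-k}\cd0\vp}{\Gam_m(\a/2)},\F\phi\right)=\frac{2^{m(n-k)}\pi^{nm/2}\Gam_m(n/2)}{\Gam_m(k/2)^2}\,(E_{k-n} A_{k,m}\vp,\phi).
\]

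Finally I would compare the two expressions, both proportional to $(E_{k-n} A_{k,m}\vp,\phi)$. Matching $\tilde c$ against $\tilde d_0$ times the constant on the right forces
\[
\tilde d_0=\tilde c\cdot\frac{\Gam_m(k/2)^2}{2^{m(n-k)}\pi^{nm/2}\Gam_m(n/2)}=\frac{\Gam_m((n-k)/2)\,\Gam_m(k/2)}{\Gam_m(n/2)},
\]
which is exactly the asserted value. Since this equality of the two tested distributions holds for every $\F\phi$, and these range over all of $\S(\Ma)$, the underlying distributions coincide, yielding (\ref{ceq5yf}). I expect the only genuine subtlety to be the regularity-at-$\a=0$ step, namely confirming that $\Gam_m((k-\a)/2)$ is precisely the normalization rendering the $A_{k,m}\vp$-term entire via Lemma \ref{l3.1}; once that is in place, everything else is constant bookkeeping.
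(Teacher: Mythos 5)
Your argument is correct and is exactly the paper's proof: the author derives the corollary "immediately from (\ref{eq5yq}) and (\ref{eq5y})," i.e., by equating the two expressions for $(E_{k-n}A_{k,m}\vp,\phi)$ coming from Theorem \ref{nkild} and from Corollary \ref{nkil} at $\a=0$, which is precisely your route, constant bookkeeping included. Your added remarks on the regularity of the normalized right-hand side at $\a=0$ and on surjectivity of $\F$ on $\S(\Ma)$ are the correct (implicit) justifications for the paper's one-line deduction.
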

\begin{proof} The statement follows immediately from (\ref{eq5yq}) and (\ref{eq5y}).
\end{proof}

In the case $k=m$ we have the following
\begin{corollary} \label {nkilz} Let $f$ be an integrable right $O(m)$-invariant function on $\vnm$, $\phi\in
 \S(\Ma)$, $1\le m \le n-1$. Then
 \be\label{ceq5yq}
(E_{-m} F_m f, \F \phi)=\tilde c_0 \, (E_{m-n}  f,  \phi),\ee
\[ \tilde c_0 =\frac{ 2^{m(n-m)}\, \pi^{nm/2}\, \Gam_m ((n-m)/2)\,}{\Gam_m (m/2)}.\]
\end{corollary}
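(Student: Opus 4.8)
The plan is to obtain this statement as the direct specialization $k=m$ of Theorem \ref{nkild}, so that no fresh computation is needed; the whole content is the collapse of the $k$-dependent objects on both sides when $k$ meets $m$. Under the standing hypothesis $1\le m\le k\le n-1$ of that theorem, putting $k=m$ returns exactly the range $1\le m\le n-1$ assumed here, and the right $O(k)$-invariance requirement becomes right $O(m)$-invariance, matching the hypothesis imposed on $f$.

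First I would record the two structural degenerations. When $k=m$ the complementary Radon transform $A_{k,m}$ reduces to the identity: in the defining integral (\ref{akm}) the inner Stiefel manifold is $V_{n-m,0}$, a single point, and the block frame reduces to $v_0$ of (\ref{klop8}), whence $g_v v_0=v$ and $(A_{m,m}\vp)(v)=\vp(v)$; this is already noted in the text following (\ref{akm}). Simultaneously, with $k=m$ the incidence conditions $\{v:\,u'v=0\}$ and $\{u:\,v'u=0\}$ in (\ref{la3v}) and (\ref{la3vd}) coincide, so the Funk transform is self-dual, $\fd=F_{m,m}=F_m$. I would justify this self-duality either directly from the symmetry of the condition $u'v=0$, or equivalently by specializing the duality relation (\ref{009a}) to $k=m$, where the two Stiefel manifolds are identical.

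Then I would simply set $k=m$ in the functional equation (\ref{eq5yq}). After renaming $\vp$ to $f$, the left-hand side $(E_{-k}\fd\vp,\F\phi)$ becomes $(E_{-m}F_m f,\F\phi)$; the right-hand side $(E_{k-n}A_{k,m}\vp,\phi)$ becomes $(E_{m-n}f,\phi)$ by the identity $A_{m,m}=\mathrm{id}$; and the constant $\tilde c$ of Theorem \ref{nkild} evaluates at $k=m$ to
\[
\frac{2^{m(n-m)}\,\pi^{nm/2}\,\Gam_m((n-m)/2)}{\Gam_m(m/2)}=\tilde c_0,
\]
which is precisely the constant in (\ref{ceq5yq}). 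This reproduces the corollary verbatim.

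Since the argument is a substitution into an already-established identity, there is no genuine obstacle. The only point deserving a moment of care is the self-duality $\fd=F_m$ at $k=m$, which must be invoked to pass from the dual Funk transform appearing in Theorem \ref{nkild} to the Funk transform $F_m$ written in the statement; everything else is a transparent limiting case of the $k$-dependent ingredients, and the interpretation of both sides (in the sense of analytic continuation, as in Theorem \ref{nkild}) is inherited unchanged.
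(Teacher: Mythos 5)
Your proposal is correct and is exactly the argument the paper intends: the corollary is presented as the $k=m$ specialization of Theorem \ref{nkild}, with $A_{m,m}$ reducing to the identity and the self-duality $\fd=F_m$ following from the symmetry of the incidence condition $u'v=0$, and your evaluation of the constant $\tilde c$ at $k=m$ matches $\tilde c_0$. No gaps.
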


\subsection{Analytic properties of the sine transform}

Corollary \ref{85b},  combined with analytic properties of the cosine
transform, enables us to study
analytic continuation of the sine transform
\be\label {lm9} (Q^\a f)(u)=\int_{\vnm}\!\!\!f(v)\,|I_m\!-\!v' uu'
v|^{(\a+m-n)/2}\, d_*v, \quad u \in V_{n,m}.\nonumber\ee

\begin{lemma} \label {mkloi} Let $f$ be an integrable right $O(m)$-invariant function on $\vnm$; $ 2m\le n$, $\om \in C^\infty (\vnm)$. Then  $(Q^\a f, \om)$ extends as a meromorphic function of $\a$  with the polar  set
 $ \{m-1, m-2, \ldots \,\}$
  by the formula
\be \label {gty4e} (Q^{\a}f, \om)=d_\a\, (M^{\a+2m-n} F_{m}f,
\om), \ee
\[ d_\a=\frac{\Gam_{m} (m/2)\, \Gam_{m} (\a/2)}{\Gam_{m}
((n-m)/2)\, \Gam_{m} ((\a+2m-n)/2)}.\]
The normalized function
\be \eta (\a)=\frac{1}{\Gam_{m} (\a/2)}\, (Q^{\a}f, \om)\ee
 extends as an entire function of $\a$.
\end{lemma}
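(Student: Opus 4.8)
The plan is to bootstrap the continuation from the composition formula of Corollary \ref{85b}(i) together with the already-established analytic continuation of the dual cosine transform in Corollary \ref{cr1}; the genuinely analytic work has been done there, and what remains is to match domains of convergence and read off the pole structure from the gamma factors. First I would note that, since $2m\le n$, the half-plane $Re\,\a>n-m-1$ in which (\ref{gty4}) is valid sits inside the half-plane $Re\,\a>m-1$ where $Q^\a f=\S_{m,m}^\a f$ is given by an absolutely convergent integral and is integrable against $\om$ (Theorem \ref{exi}, parts (i) and (iii), the latter needing exactly $2m\le n$). Thus on $Re\,\a>n-m-1$ the identity
\[(Q^\a f,\om)=d_\a\,(M^{\a+2m-n}F_m f,\om)\]
holds as an equality of honest convergent pairings, with $d_\a$ as in the statement.

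Next I would check that the right-hand side continues. The three inputs are: $F_m f=F_{m,m}f\in L^1(\vnm)$ by Corollary \ref{laas}; $F_m f$ is right $O(m)$-invariant, which is evident from (\ref{876a}) since the defining integral is insensitive to right multiplication of $u$ by $O(m)$; and, for $k=m$, the dual cosine transform $\cd0$ of (\ref{0mbyd}) is literally $M^\a$. Hence Corollary \ref{cr1}, applied with $k=m$ and $\vp=F_m f$, gives that $\beta\mapsto \Gam_m(\beta/2)^{-1}(M^\beta F_m f,\om)$ is entire, and composing with the affine substitution $\beta=\a+2m-n$ keeps it entire in $\a$. Writing $E(\a)$ for this entire function and factoring $d_\a$, I would rearrange the identity above into
\[(Q^\a f,\om)=\frac{\Gam_m(m/2)}{\Gam_m((n-m)/2)}\;\Gam_m(\a/2)\,E(\a),\]
in which the prefactor is a finite nonzero constant because both $m/2$ and $(n-m)/2$ avoid the poles of $\Gam_m$. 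By uniqueness of analytic continuation this right-hand side \emph{is} the sought extension of $(Q^\a f,\om)$: the two sides agree on $Re\,\a>n-m-1$, and both are analytic on the connected region $Re\,\a>m-1$ (the left by absolute convergence, the right because $\Gam_m(\a/2)$ has no poles there), so they agree throughout $Re\,\a>m-1$ and the right-hand side supplies the meromorphic continuation to all of $\bbc$.

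The two conclusions then drop out. The normalized function $\eta(\a)=\Gam_m(\a/2)^{-1}(Q^\a f,\om)$ equals the constant multiple $\frac{\Gam_m(m/2)}{\Gam_m((n-m)/2)}E(\a)$ of an entire function, hence is entire. Since $(Q^\a f,\om)=\Gam_m(\a/2)\,\eta(\a)$ with $\eta$ entire, its poles lie in the pole set of $\Gam_m(\a/2)$; by the factorization (\ref{2.4}) and its polar set (\ref{09k}), $\Gam_m(\a/2)$ is singular precisely when $\a/2\in\{(m-1-j)/2:j\ge0\}$, i.e. at $\a\in\{m-1,m-2,\dots\}$, which is the asserted polar set.

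I do not expect a serious obstacle here, because the delicate step — continuing the normalized cosine transform off the region of convergence, which in Theorem \ref{gen}/Corollary \ref{cr1} forced the passage from the scalar $\a$ to the vector parameter $\lv\in\bbc^m$ — is imported wholesale. The only points requiring genuine care are bookkeeping ones: verifying that the two half-planes overlap (this is where $2m\le n$ is used), confirming that $F_m f$ satisfies the integrability and right $O(m)$-invariance hypotheses needed to invoke Corollary \ref{cr1}, and observing that the auxiliary factor $\Gam_m((\a+2m-n)/2)^{-1}$ cancels exactly the gamma-type singularity of $(M^{\a+2m-n}F_m f,\om)$, so that every surviving pole originates from $\Gam_m(\a/2)$ alone.
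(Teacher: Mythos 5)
Your proposal is correct and follows essentially the same route as the paper: write $(Q^\a f,\om)=d_\a\,(M^{\a+2m-n}F_m f,\om)$ on the half-plane $Re\,\a>n-m-1$ via Corollary \ref{85b}(i), then invoke Corollary \ref{cr1} with $k=m$ and $\vp=F_m f$ so that the normalized factor $\Gam_m((\a+2m-n)/2)^{-1}(M^{\a+2m-n}F_m f,\om)$ is entire, leaving only the poles of $\Gam_m(\a/2)$. Your additional bookkeeping (overlap of half-planes, integrability and right $O(m)$-invariance of $F_m f$) is exactly what the paper leaves implicit.
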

\begin{proof} By  (\ref{gty4}), for $Re \, \a> n-m-1$ we have
$Q^{\a}f=d_\a\, M^{\a+2m-n} F_{m}f$.
 Hence,
\[(Q^{\a}f, \om)=\frac{c\, (M^{\a+2m-n} F_{m}f,
\om)}{\Gam_{m} ((\a+2m-n)/2)}, \qquad c=\frac{\Gam_{m} (m/2)\, \Gam_{m} (\a/2)}{\Gam_{m}
((n-m)/2)}.\]
  Applying Corollary
\ref{cr1} with $\vp=F_{m}f$ and $k=m$, we get the result.
\end{proof}
\begin{remark} {\rm We conjecture that $\eta (\a)$ is an entire function
even if $f$ is not right $O(m)$-invariant, as in the case $m=1$; cf. \cite[p. 474]{Ru4}.}
\end{remark}

\section{Normalized Cosine and Sine Transforms}

For the following it is convenient to suitably normalize  cosine and sine transforms. The normalized transforms will be denoted by the corresponding calligraphic letters.  Assuming $1\le m\le k \le n-1$, we set

\noindent {\bf A.} For $u \!\in\! V_{n,k},\; v \!\in\! V_{n,m}$:

\be \label{n0mby}(\Cs_{m,k}^\a f)(u)=\del_{n,m,k} (\a)\int_{\vnm} \!\!\!f(v)\,
|v'uu'v|^{(\a-k)/2} \, d_*v,\ee

\be \label{n0mbyd}(\ncd0 \vp)(v)=\del_{n,m,k} (\a) \int_{\vnk} \!\!\vp(u)\, |v'uu'v|^{(\a-k)/2} \, d_*u,\ee
\be \label{n0mbs}(\Ss_{m,k}^\a f)(u)=d_{n,m,k} (\a)\int_{\vnm} \!\!\!f(v)\, |I_m
-v'uu'v|^{(\a+k-n)/2} \, d_*v,\ee
\be \label{n0mbsd}(\ncs \vp)(v)=d_{n,m,k} (\a) \int_{\vnk} \!\!\vp(u)\,|I_m
-v'uu'v|^{(\a+k-n)/2}  \, d_*u;\ee
\be \del_{n,m,k}
(\a)=\frac{\Gam_m(m/2)}{\Gam_m(n/2)}\,\frac{\Gam_m((k-\a)/2)}{\Gam_m(\a/2)}, \qquad \a+m-k\neq 1,2, \ldots;
\nonumber \ee
\be d_{n,m,k} (\a)=\frac{\Gam_m(k/2)}{\Gam_m(n/2)}\,\frac{\Gam_m((n-k-\a)/2)}{\Gam_m(\a/2)}, \qquad \a+k+m-n\neq 1,2, \ldots;
\nonumber\ee

\noindent  {\bf B.} For $u \!\in\! V_{n,m},\; v \!\in\! V_{n,m}$:
\be\label{nmn}(\Ms^{\a}
f)(u)=\del_{n,m} (\a)\int_{\vnm} f(v)\, |u'v|^{\a -m} \,
d_*v,\ee
\be\label {44z} (\Qs^\a
f)(u)\!=\!d_{n,m}(\a)
\int_{\vnm}\!\!\!\!\!\!f(v)\,|I_m\!-\!v' uu' v|^{(\a+m-n)/2}\, d_*v,\quad 2m\le n;\ee
\be \del_{n,m}
(\a)=\frac{\Gam_m(m/2)}{\Gam_m(n/2)}\,\frac{\Gam_m((m-\a)/2)}{\Gam_m(\a/2)}, \qquad \a\neq 1,2, \ldots;\nonumber\ee
\[d_{n,m}(\a)=\frac{\Gam_{m} (m/2)\, \Gam_{m} ((n-m-\a)/2)}{\Gam_{m} (n/2)\, \Gam_{m} (\a/2)}, \qquad \a+2m-n\neq 1,2, \ldots;\]
 cf. Remark \ref{lp00}. By Theorem \ref{exi}, all these integrals are absolutely convergent if $Re \, \a> m-1$. Excluded values of $\a$ belong to the polar set  of the corresponding gamma function in the numerator; cf. (\ref{09k}).

\begin{theorem} \label{cr2}

\noindent {\rm (i)} Let $\vp$ be an integrable right
$O(k)$-invariant function on $\vnk$, $\om \in C^\infty
(\vnm)$, $1\le m\le k \le n-1$. Then
\be \label {for1y}  \underset
{\a=0}{a.c.} \,(\ncd0 \vp, \om)\!=\!\mu_k\, (\fd \vp, \om), \quad \mu_k\!=\! \frac{\Gam_{m} (m/2)}{\Gam_{m} ((n\!-\!k)/2)}.\ee
In particular, for  any integrable right
$O(m)$-invariant function $f$ on $\vnm$, $2m\le n$, we have
\be \label {for1t}  \underset
{\a=0}{a.c.} \,(\Ms^{\a}f, \om)\!=\!\mu_m\, (F_m f, \om),\quad \mu_m\!=\! \frac{\Gam_{m} (m/2)}{\Gam_{m} ((n\!-\!m)/2)}.\ee
\end{theorem}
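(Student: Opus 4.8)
The plan is to deduce (\ref{for1y}) from the functional equation in Corollary \ref{nkilf} by testing it against a function $\phi$ on $\Ma$ that separates in polar coordinates. Concretely, I would fix $\om\in C^\infty(\vnm)$ together with a nonnegative $C^\infty$ function $\psi$ on the cone $\Omega$ with compact support away from $\partial\Omega$, and set $\phi(x)=\psi(r)\,\om(v)$ for $x=vr^{1/2}$. Writing the $\Ma$-pairings in Corollary \ref{nkilf} in polar coordinates via Lemma \ref{l2.3} ($dx=2^{-m}|r|^{(n-m-1)/2}dr\,dv$), both sides factor into a $v$-integral over $\vnm$ against $\overline{\om}$ and a radial $r$-integral. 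Since $E_{\a-k}$ contributes the weight $|r|^{(\a-k)/2}$ and $E_{-k}$ the weight $|r|^{-k/2}$, the radial integrals are respectively
\[
J_\psi(\a)=\intl_\Omega |r|^{(\a+n-m-k-1)/2}\,\overline{\psi(r)}\,dr,\qquad J_\psi:=J_\psi(0),
\]
and $J_\psi(\a)$ is an entire function of $\a$; one may clearly choose $\psi$ so that $J_\psi\neq 0$.

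With this choice, Corollary \ref{nkilf} collapses (after cancelling the common factor $2^{-m}$) to the scalar identity
\[
J_\psi\,(\fd\vp,\om)=\tilde d_0\,\underset{\a=0}{a.c.}\Big[\frac{J_\psi(\a)}{\Gam_m(\a/2)}\,(\cd0\vp,\om)\Big],
\]
where $(\cdot,\cdot)$ now denotes integration over $\vnm$ against $\overline{\om}$ with one fixed measure common to both sides, so its normalization is irrelevant, and $\tilde d_0=\Gam_m((n-k)/2)\Gam_m(k/2)/\Gam_m(n/2)$ is the constant from Corollary \ref{nkilf}. Because $J_\psi(\a)$ is entire with $J_\psi(0)=J_\psi\neq0$, and because the normalized pairing $(\cd0\vp,\om)/\Gam_m(\a/2)$ extends to an entire function of $\a$ by Corollary \ref{cr1}, the factor $J_\psi(\a)$ may be pulled out of the analytic continuation and evaluated at $\a=0$; the resulting $J_\psi$ cancels the one on the left, leaving
\[
(\fd\vp,\om)=\tilde d_0\,\underset{\a=0}{a.c.}\Big[\frac{(\cd0\vp,\om)}{\Gam_m(\a/2)}\Big].
\]

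It then remains to re-express the right-hand side through the \emph{normalized} dual cosine transform $\ncd0$. By (\ref{n0mbyd}), $(\ncd0\vp,\om)=\del_{n,m,k}(\a)\,(\cd0\vp,\om)$ with $\del_{n,m,k}(\a)=\frac{\Gam_m(m/2)}{\Gam_m(n/2)}\,\frac{\Gam_m((k-\a)/2)}{\Gam_m(\a/2)}$. Since $\Gam_m((k-\a)/2)$ is regular at $\a=0$, taking analytic continuation there gives
\[
\underset{\a=0}{a.c.}(\ncd0\vp,\om)=\frac{\Gam_m(m/2)\Gam_m(k/2)}{\Gam_m(n/2)}\,\underset{\a=0}{a.c.}\Big[\frac{(\cd0\vp,\om)}{\Gam_m(\a/2)}\Big]=\frac{\Gam_m(m/2)\Gam_m(k/2)}{\Gam_m(n/2)\,\tilde d_0}\,(\fd\vp,\om),
\]
and with the above value of $\tilde d_0$ the constant collapses to $\Gam_m(m/2)/\Gam_m((n-k)/2)=\mu_k$, which is (\ref{for1y}). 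The particular case (\ref{for1t}) is the specialization $k=m$: then the kernel $|v'uu'v|^{(\a-m)/2}=|u'v|^{\a-m}$ turns $\ncd0$ into the operator $\Ms^\a$ of (\ref{nmn}), the dual Funk transform $\fd$ becomes the self-dual $F_m$, the compatibility condition $k\le n-m$ reads $2m\le n$, and $\mu_k$ becomes $\mu_m$.

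The main obstacle I expect is the careful bookkeeping of analytic continuation rather than any genuine difficulty: one must justify that pulling the entire factor $J_\psi(\a)$ out of $\underset{\a=0}{a.c.}$ is legitimate (this rests on $J_\psi(\a)$ being entire together with the entirety of the normalized pairing supplied by Corollary \ref{cr1}) and that the final identity is independent of the auxiliary function $\psi$. Everything else is a routine combination of the polar decomposition and the gamma-factor identities.
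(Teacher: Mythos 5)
Your proposal is correct and follows essentially the same route as the paper: the paper's proof of Theorem \ref{cr2} simply cites (\ref{eq5y}), (\ref{eq5yq}) and the normalization (\ref{n0mbyd}), which is exactly the content of Corollary \ref{nkilf} plus the gamma-factor bookkeeping you carry out. Your reduction via the separated test function $\phi(x)=\psi(r)\om(v)$ is the same device the paper itself uses in the proofs of Corollary \ref{cr1} and Theorem \ref{cr24}, so you have merely made explicit the details the author left implicit.
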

\begin{proof} Equality (\ref{for1y}) follows from (\ref{eq5y}),  (\ref{eq5yq}), and (\ref{n0mbyd}); (\ref{for1t}) is a consequence of (\ref{for1y}).
\end{proof}

The following statement holds by duality.

\begin{corollary} \label{cr124}   Let $f$ be an infinitely differentiable right $O(m)$-invariant function on $\vnm$, $1\le m\le k \le n-1$.
Then for any  $\om \in C^\infty (\vnm)$,
\be \label {for1y4}  \underset
{\a=0}{a.c.} \,(\Cs_{m,k}^\a f, \om)\!=\!\mu_k\, (F_{m,k} f,\om).\ee
\end{corollary}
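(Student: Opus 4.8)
The plan is to obtain the assertion as the dual form of Theorem \ref{cr2}(i), interchanging the cosine transform with its dual and the Funk transform with its dual. Two adjoint relations, both for the bilinear pairing used in (\ref{009a}), drive the argument. First, since the kernel $|v'uu'v|^{(\a-k)/2}$ and the normalizing factor $\del_{n,m,k}(\a)$ are common to (\ref{n0mby}) and (\ref{n0mbyd}), Fubini's theorem --- legitimate for $Re\,\a>m-1$ by Theorem \ref{exi} --- gives, upon reading the integral in the two orders, $(\Cs_{m,k}^\a f,\om)=(f,\ncd0\om)$, where the pairing on the left is over $\vnk$ and the one on the right over $\vnm$. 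Second, the duality (\ref{009a}) applied to $\fd$ reads $(F_{m,k}f,\om)=(f,\fd\om)$. As a preliminary reduction I would note that $\Cs_{m,k}^\a f$ and $F_{m,k}f$ are right $O(k)$-invariant, so both pairings depend on $\om$ only through its $O(k)$-average $\bar\om(u)=\int_{O(k)}\om(u\gam)\,d\gam$; replacing $\om$ by $\bar\om$, I may and do assume $\om$ is right $O(k)$-invariant, which is exactly the hypothesis under which Theorem \ref{cr2}(i) is stated.

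With these relations in hand the argument is purely formal. For $Re\,\a>m-1$ the two scalar functions $\a\mapsto(\Cs_{m,k}^\a f,\om)$ and $\a\mapsto(\ncd0\om,f)$ are the same absolutely convergent integral and hence coincide; consequently their analytic continuations agree wherever defined, in particular at $\a=0$. This is the step that transfers the analytic continuation from the operator level to the scalar pairing. Now Theorem \ref{cr2}(i), applied with input $\om$ (right $O(k)$-invariant on $\vnk$) and test function $f\in C^\infty(\vnm)$, evaluates precisely this continuation, and feeding the result back through the two adjoint relations yields
\[
\underset{\a=0}{a.c.}\,(\Cs_{m,k}^\a f,\om)=\underset{\a=0}{a.c.}\,(\ncd0\om,f)=\mu_k\,(\fd\om,f)=\mu_k\,(F_{m,k}f,\om),
\]
which is the desired identity.

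Since the entire analytic content already lies in Theorem \ref{cr2}(i), the only matters requiring care are the two routine justifications flagged above. The first is that analytic continuation in $\a$ commutes with the fixed duality pairing; this is guaranteed because, by Corollary \ref{cr1}, the $\Gam_m(\a/2)$-normalized scalar function of $\a$ extends meromorphically and is regular at $\a=0$ (the factor $\Gam_m((k-\a)/2)$ in $\del_{n,m,k}$ being finite there). The second is that passing to the $O(k)$-average $\bar\om$ changes neither pairing, which is what legitimizes invoking Theorem \ref{cr2}(i) despite its $O(k)$-invariance hypothesis. No new estimates or analytic continuations beyond those already established for Theorem \ref{cr2} are needed, so there is no substantial obstacle here --- the whole point is that the inverse Funk transform is recovered as a member of the normalized cosine family by transposing the dual statement.
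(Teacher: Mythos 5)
Your argument is correct and is precisely the paper's intended proof: the paper offers no details beyond the phrase ``holds by duality,'' and your two adjoint identities $(\Cs_{m,k}^\a f,\om)=(f,\ncd0\om)$ and $(F_{m,k}f,\om)=(f,\fd\om)$, together with the $O(k)$-averaging of $\om$ so that Theorem \ref{cr2}(i) applies with $\vp=\bar\om$ and test function $f$, are exactly that duality made explicit. The only point worth flagging is that the pairing forces $\om$ to live on $\vnk$ rather than $\vnm$ as printed (an apparent typo in the statement), which your averaging step implicitly and correctly resolves.
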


Theorem \ref {cr2} and Corollary \ref {cr124} show that the Funk transform $F_{m,k}$ and its dual can be regarded (up to a constant multiple) as members of the corresponding analytic families of normalized cosine transforms. If $m=1$, then (\ref{for1y})-(\ref{for1y4})  agree with \cite[Lemma 3.1]{Ru7}.
\begin{lemma}  Let $f$ be an integrable right
$O(m)$-invariant function on $\vnm$,  $1\le m\le k \le n-m$, \[Re\, \a >m-1, \qquad \a\neq k-m+1, k-m+2, \ldots\, .\] Then
\be\label {ores} \ncd0 F_{m,k} f=\varkappa_k \Qs^{\a+n-k-m}f, \qquad \varkappa_k =\frac{ \Gam_m ((n-m)/2)}{\Gam_m (k/2)}.\ee
\end{lemma}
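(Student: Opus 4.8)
The plan is to obtain (\ref{ores}) as an immediate consequence of the non-normalized composition formula (\ref{gty}) of Theorem \ref{xcty}, followed by a routine reconciliation of the normalizing constants via the Siegel gamma function $\Gam_m$. Under the present hypotheses $1\le m\le k\le n-m$ and $Re\,\a>m-1$, Theorem \ref{xcty} gives
\[\cd0 F_{m,k} f=c_\a\, Q^{\a+n-k-m} f, \qquad c_\a=\frac{\Gam_m((n-m)/2)\, \Gam_m(\a/2)}{\Gam_m(k/2)\, \Gam_m((\a+n-m-k)/2)},\]
so the left-hand side of (\ref{ores}) is already a scalar multiple of $Q^{\a+n-k-m}f$; only the constants remain to be matched. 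Moreover, as noted in the proof of Theorem \ref{xcty}, the inequalities $k\le n-m$ and $Re\,\a>m-1$ force $Re\,(\a+n-k-m)>m-1$, so by Theorem \ref{exi} the sine transform $Q^{\a+n-k-m}f$ is an absolutely convergent integral in $L^1(\vnm)$; note also that $m\le k\le n-m$ yields $2m\le n$, so $\Qs^{\a+n-k-m}$ is legitimately defined.

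Next I would insert the definitions of the normalized operators. By (\ref{n0mbyd}) we have $\ncd0 F_{m,k}f=\del_{n,m,k}(\a)\,\cd0 F_{m,k}f$, and by (\ref{44z}), writing $\b=\a+n-k-m$, we have $\Qs^{\b}f=d_{n,m}(\b)\,Q^{\b}f$. Substituting the displayed formula for $\cd0 F_{m,k}f$, the asserted identity (\ref{ores}) becomes equivalent to the purely scalar relation
\[\del_{n,m,k}(\a)\,c_\a=\varkappa_k\,d_{n,m}(\a+n-k-m).\]

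Then I would verify this relation. Substituting $\b=\a+n-k-m$ into $d_{n,m}$, one finds $(n-m-\b)/2=(k-\a)/2$ and $\b/2=(\a+n-k-m)/2$; on the left, the factor $\Gam_m(\a/2)$ appearing in $c_\a$ cancels the $\Gam_m(\a/2)$ in the denominator of $\del_{n,m,k}(\a)$. Both sides then collapse to
\[\frac{\Gam_m(m/2)\,\Gam_m((n-m)/2)\,\Gam_m((k-\a)/2)}{\Gam_m(n/2)\,\Gam_m(k/2)\,\Gam_m((\a+n-k-m)/2)},\]
which proves the claim. I would also check consistency of the excluded set: $\a=k-m+1,k-m+2,\dots$ are exactly the poles of $\Gam_m((k-\a)/2)$ (cf. the polar set (\ref{09k})), i.e. the points where $\del_{n,m,k}(\a)$ and $d_{n,m}(\a+n-k-m)$ cease to be finite; since the excluded condition $\a+m-k\neq1,2,\dots$ is the same for both normalizing constants, away from these points every quantity is finite and the equality of constants is genuine, not merely meromorphic.

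I do not expect a conceptual obstacle here: once (\ref{gty}) is invoked, the entire content is gamma-function arithmetic together with consistent tracking of the parameter shift $\b=\a+n-k-m$. The only point demanding a little care is precisely this matching of the excluded values of $\a$ with the common poles of the normalizing factors, so that (\ref{ores}) is read as an honest identity of functions of $\a$ on the stated domain rather than an identity up to poles. The hypothesis that $f$ be right $O(m)$-invariant is carried along for consistency with the Grassmannian setting but is not actually used in the argument, since (\ref{gty}) already holds for every $f\in L^1(\vnm)$.
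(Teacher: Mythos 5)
Your proposal is correct and is exactly the paper's argument: the paper's entire proof is ``Write (\ref{gty}) in terms of normalized operators and you are done,'' and you have simply carried out that gamma-function bookkeeping explicitly (the scalar identity $\del_{n,m,k}(\a)\,c_\a=\varkappa_k\,d_{n,m}(\a+n-k-m)$ checks out, and the excluded sets of $\a$ do match). No further comment is needed.
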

\begin{proof} Write (\ref{gty}) in terms of normalized operators and you are done.
\end{proof}

The next theorem contains inversion formulas for the Funk transform $F_m$ and the cosine transform $\Ms^{\a}$ in terms  of distributions.   The corresponding results for $m=1$ are due to  Semyanistyi  \cite{Se}.
\begin{theorem} \label{cr24} Let $f$ be an integrable right
$O(m)$-invariant function on $\vnm$, $2m \le n$. If  $Re \, \a> m-1$, $ \a\neq m, m+1, m+2, \ldots$, then
  \be \label {for1}  \underset
{\b=2m -\a -n}{a.c.} \,(\Ms^{\b} \Ms^{\a}f, \om)=(f,
\om), \qquad\om \in C^\infty
(\vnm).\ee
 Moreover  \text{\rm (cf. (\ref{for1t}))},
  \be \label {forq1}  \underset
{\a=2m -n}{a.c.} \,(\Ms^{\a} F_m f, \om)\!=\!\varkappa _m (f,
\om),\qquad \varkappa _m \!=\!\frac{ \Gam_m ((n\!-\!m)/2)}{\Gam_m (m/2)}.\ee
\end{theorem}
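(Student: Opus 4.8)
The plan is to base everything on one \emph{master functional equation} for the normalized cosine transform and then chain two copies of it through the Fourier transform on $\Ma$. First I would rewrite the identity (\ref{eq5ya}) in terms of $\Ms^\a=\del_{n,m}(\a)\,M^\a$. Substituting the definition of $\del_{n,m}(\a)$ and cancelling the factors $\Gam_m((m-\a)/2)$, $\Gam_m(\a/2)$, $\Gam_m(n/2)$, $\Gam_m(m/2)$ against those inside $\check c_2$, the entire normalization collapses and one is left with
\be\label{master}
(E_{\a-m}\Ms^\a f,\,\F\phi)=c(\a)\,(E_{m-\a-n}f,\,\phi),\qquad c(\a)=2^{m(n+\a-m)}\pi^{nm/2},
\ee
valid for every $\a\in\bbc$ in the sense of analytic continuation, for every integrable right $O(m)$-invariant $f$ and every $\phi\in\S(\Ma)$. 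The content of (\ref{master}) is that, once lifted to $\Ma$ by the homogeneous extension $E_\lam$ of (\ref{132}), the operator $\Ms^\a$ becomes, up to the explicit constant $c(\a)$, a Riesz-type operator in the Fourier variable.

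For (\ref{for1}) I would iterate (\ref{master}). Applying it to the integrable right $O(m)$-invariant function $\Ms^\a f$ (here $Re\,\a>m-1$ guarantees integrability by Theorem \ref{exi}) with exponent $\b$ gives $(E_{\b-m}\Ms^\b\Ms^\a f,\F\phi)=c(\b)(E_{m-\b-n}\Ms^\a f,\phi)$. Setting $\b=2m-\a-n$ turns $\b-m$ into $m-\a-n$ and $m-\b-n$ into $\a-m$, so the right-hand side becomes $c(2m-\a-n)(E_{\a-m}\Ms^\a f,\phi)$. I then take $\phi=\F\psi$ and feed this into (\ref{master}) with exponent $\a$, obtaining $(E_{\a-m}\Ms^\a f,\F\psi)=c(\a)(E_{m-\a-n}f,\psi)$; on the left $\F\phi=\F\F\psi=(2\pi)^{nm}\psi(-\cdot)$ by (\ref{check}), and since $\Ms^\b\Ms^\a f$ is even (it depends only on $v'uu'v$) and $E_\lam$ preserves evenness, the reflection is harmless. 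Collecting terms yields $(2\pi)^{nm}(E_{m-\a-n}\Ms^\b\Ms^\a f,\psi)=c(\a)\,c(2m-\a-n)\,(E_{m-\a-n}f,\psi)$ for all $\psi$. The one-line computation $c(\a)\,c(2m-\a-n)=2^{m(n+\a-m)}2^{m(m-\a)}\pi^{nm}=(2\pi)^{nm}$, together with injectivity of $E_{m-\a-n}$ (one recovers $g$ from $E_\lam g$ by restricting to $r=I_m$), gives $\Ms^\b\Ms^\a f=f$, which is (\ref{for1}).

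Formula (\ref{forq1}) is the same composition with the second factor replaced by the dual-Funk equation of Corollary \ref{nkilz}, i.e. $(E_{-m}F_m f,\F\psi)=\tilde c_0(E_{m-n}f,\psi)$. Applying (\ref{master}) to the right $O(m)$-invariant function $F_m f$ at $\a=2m-n$ gives $(E_{m-n}\Ms^{2m-n}F_m f,\F\phi)=c(2m-n)(E_{-m}F_m f,\phi)$; taking $\phi=\F\psi$, using Corollary \ref{nkilz} on the right and $\F\F$ with evenness on the left, I get $(2\pi)^{nm}(E_{m-n}\Ms^{2m-n}F_m f,\psi)=c(2m-n)\,\tilde c_0\,(E_{m-n}f,\psi)$. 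Since $c(2m-n)=2^{m^2}\pi^{nm/2}$ one finds $c(2m-n)\tilde c_0=(2\pi)^{nm}\,\Gam_m((n-m)/2)/\Gam_m(m/2)=(2\pi)^{nm}\varkappa_m$, so $\Ms^{2m-n}F_m f=\varkappa_m f$, which is (\ref{forq1}). (Alternatively (\ref{forq1}) can be read off from (\ref{for1}) at $\a=0$ combined with (\ref{for1t}), but the direct route avoids the borderline value $\a=0$.)

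The main obstacle is not the algebra but the bookkeeping of analytic continuation, since the extensions $E_{\a-m}\Ms^\a f$, $E_{m-\a-n}f$, and $E_{\b-m}\Ms^\b\Ms^\a f$ are given by absolutely convergent integrals only in mutually disjoint parameter ranges --- exactly the phenomenon flagged in the Remark after (\ref{eq5y}). I would therefore treat $\b$ (respectively $\a$) as a free complex parameter throughout, read every occurrence of $\Ms^\b\Ms^\a f$ and of (\ref{master}) as its meromorphic continuation, and only at the very end specialize to $\b=2m-\a-n$ (respectively $\a=2m-n$), where the right-hand sides are genuinely convergent objects; uniqueness of analytic continuation then guarantees the identities persist. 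Two minor points must also be verified: that the distributional identities in $E_\lam$ descend to $\vnm$ via the injectivity noted above, and that the excluded exponents $\a=m,m+1,\dots$ are precisely those avoiding the poles of $\del_{n,m}(\a)$, so that $\Ms^\a f$ is a legitimate input to (\ref{master}).
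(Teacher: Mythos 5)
Your proposal is correct and follows essentially the same route as the paper: both proofs chain two applications of the functional equation (\ref{eq5ya}) (your ``master equation'' is just (\ref{eq5ya}) with the normalization $\del_{n,m}(\a)$ absorbed up front, whereas the paper carries the gamma factors through and normalizes at the end), use Theorem \ref{exi} to justify feeding $M^\a f$ back in, invoke Corollary \ref{nkilz} for (\ref{forq1}), and verify the same constant cancellation $c(\a)c(2m-\a-n)=(2\pi)^{nm}$. The one step you defer --- descending the identity from $\Ma$ to $\vnm$ --- is handled in the paper exactly as you suggest, by testing against $\F\phi=\psi(r)\om(v)$ and factoring out $h_\psi(\a)$, which is the rigorous version of your ``restrict to $r=I_m$'' remark.
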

\begin{proof}  We write (\ref{eq5ya}) in the form
\be\label{eq5yaf}
 \left(\frac{E_{\b-m} M^{\b} g}{\Gam_m (\b/2)}, \F \phi\right)=\check c_2 \, \left (\frac{E_{m-\b-n}  g}{\Gam_m ((m-\b)/2)},  \phi\right),\ee
\[ \check c_2 =\frac{2^{m(n+\b-m)}\, \pi^{nm/2}\,\Gam_m (n/2)}{\Gam_m (m/2)},\]
and set $g= M^{\a}f$. By Theorem \ref {exi}, $g$ is   integrable and right
$O(m)$-invariant  on $\vnm$. Then we compute analytic continuation
 at the point $\b=2m -\a-n$ and obtain
\be\label{1eq5yaf}
 \left(\frac{E_{m-\a-n} M^{2m -\a-n} M^{\a}f}{\Gam_m ((2m \!- \!\a \!- \!n)/2)}, \F \phi\right) \!= \!
 \tilde c_2 \, \left (\frac{E_{\a-m}  M^{\a}f}{\Gam_m ((\a \!+ \!n \!- \!m)/2)},  \phi\right),\ee
\[ \tilde c_2 =\frac{2^{m(m-\a)}\, \pi^{nm/2}\,\Gam_m (n/2)}{\Gam_m (m/2)}.\]
By (\ref{eq5ya}) the right-hand side of (\ref{1eq5yaf}) can be written as
\[
 \frac{\tilde c_2 \, \Gam_m (\a/2)}{\Gam_m ((\a \!+ \!n \!- \!m)/2)}\,\left (\frac{E_{\a-m}  M^{\a}f}{\Gam_m (\a/2)},  \phi\right)=
 c_3\,  \left (\frac{E_{m-\a-n}  f}{\Gam_m ((m-\a)/2)},  \F^{-1}\phi\right),
 \]
where
\[  c_3=   \frac{(2\pi)^{mn}\, \Gam_m^2 (n/2)\, \Gam_m (\a/2)}{\Gam_m^2 (m/2)\, \Gam_m ((\a+n-m)/2)}. \]
To finalize calculations, we set $(\F^{-1}\phi)(x)=(2\pi)^{-mn}(\F\phi)(-x)$ and invoke normalizing factors according to (\ref{nmn}). This yields
\be\label {hhyc}
(E_{m-\a-n} \Ms^{2m -\a-n} \Ms^{\a}f, \F \phi)=(E_{m-\a-n}f, \F \phi),\ee
where both sides  are understood as analytic continuations from the respective domains.
As in the proof
of Corollary \ref{cr1}, we choose $\F\phi$ in
(\ref{hhyc}) so that $(\F\phi)(x)=\psi (r)\om (v)$, $x=vr^{1/2}$, $v\in \vnm$.  We recall that $\om \in C^\infty (\vnm)$ and $\psi (r)$ is a nonnegative $C^\infty$ function on the cone $\Omega$  with compact support away from the boundary $\partial \Omega$.
Then   the left-hand side of (\ref{hhyc}) becomes
\[\underset
{\b=2m -\a-n}{a.c.} \,(E_{\b-m} \Ms^{\b}\Ms^{\a}f, \psi \om)=h_\psi (\a)\underset
{\b=2m -\a-n}{a.c.} \, (\Ms^{\b}\Ms^{\a}f,  \om),\]
\[ h_\psi
 (\a)=2^{-m}\int_{\Omega} |r|^{-(\a+1)/2} \overline{\psi (r)}\,dr,\]
and the right-hand side  equals $h_\psi (\a)\,(f,\om)$.
Owing to uniqueness of analytic continuation, this gives the result.

Let us prove (\ref{forq1}). We set $g=F_mf$ in (\ref{eq5yaf}). Clearly, $g$ is  integrable and right
$O(m)$-invariant; cf. Corollary \ref{laas}. Then we compute analytic continuation
 at the point $\b=2m -n$ and make use of Corollary \ref{nkilz}. This gives
\[ \underset{\b=2m -n}{a.c.} \, \left(\frac{E_{\b-m} M^{\b} F_m f}{\Gam_m (\b/2)}, \F \phi\right)=\check c_0 \, (E_{-m}  F_m f, \phi)=
\check c_{00} \,\, (E_{m-n}  f,  \F \phi),\]
\[ \check c_0 =\frac{2^{m^2}\, \pi^{nm/2}\,\Gam_m (n/2)}{\Gam_m (m/2)\, \Gam_m ((n-m)/2)},\qquad \check c_{00}=\frac{\Gam_m (n/2)}{\Gam_m^2 (m/2)}.\]
 Hence, after normalization,
 \[ \underset{\b=2m -n}{a.c.} \, (E_{\b-m} \Ms^{\b} F_m f, \F \phi)=\varkappa _m  (E_{m-n}  f,  \F \phi), \qquad \varkappa _m =\frac{ \Gam_m ((n-m)/2)}{\Gam_m (m/2)}.\]
 As above, this implies (\ref{forq1}).
 \end{proof}

The following important statement shows that
 the identity operator is a member of the analytic  family $\{\Qs^\a\}$ of the normalized sine transforms,  corresponding to $\a=0$.
\begin{theorem} \label {maeew} Let $f$ be an integrable right $O(m)$-invariant function on $\vnm$; $ 2m\le n$, $\om \in C^\infty (\vnm)$. Then  $(\Qs^\a f, \om)$ extends as a meromorphic function of $\a$  with the polar  set
 $ \{n-2m+1, n-2m+2, \ldots \,\}$.
 Moreover,
\be \label {forq1q}  \underset
{\a=0}{a.c.} \,(\Qs^\a f, \om)=(f,\om).\ee
 \end{theorem}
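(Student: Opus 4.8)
The plan is to deduce everything from the analytic properties of the unnormalized sine transform $Q^\a$ established in Lemma \ref{mkloi} and from the inversion formula (\ref{forq1}), exploiting the fact that the normalizing factor $d_{n,m}(\a)$ cancels the only troublesome gamma factor. I would split the argument into two independent parts: the meromorphic continuation together with the polar set, and the explicit evaluation of the continuation at $\a=0$.

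For the meromorphic structure I start from the definition (\ref{44z}), which gives $(\Qs^\a f,\om)=d_{n,m}(\a)\,(Q^\a f,\om)$. By Lemma \ref{mkloi} the pairing $(Q^\a f,\om)$ is meromorphic and its normalization $\eta(\a)=(Q^\a f,\om)/\Gam_m(\a/2)$ is entire, so I write $(Q^\a f,\om)=\Gam_m(\a/2)\,\eta(\a)$. Since $d_{n,m}(\a)=\Gam_m(m/2)\Gam_m((n-m-\a)/2)/[\Gam_m(n/2)\Gam_m(\a/2)]$ carries $\Gam_m(\a/2)$ in its denominator, this factor cancels and I obtain $(\Qs^\a f,\om)=\frac{\Gam_m(m/2)}{\Gam_m(n/2)}\,\Gam_m((n-m-\a)/2)\,\eta(\a)$. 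The only possible poles now come from $\Gam_m((n-m-\a)/2)$, whose polar set, by (\ref{09k}), sits at $\a=n-2m+1,n-2m+2,\dots$; this is precisely the claimed set. I would stress the subtle point here: $\a=0$ is a genuine pole of the unnormalized $(Q^\a f,\om)$ (it lies in $\{m-1,m-2,\dots\}$), but the zero of $1/\Gam_m(\a/2)$ at $\a=0$ removes it, so the continuation of $(\Qs^\a f,\om)$ is regular at $\a=0$, consistent with $2m\le n$ placing $\a=0$ strictly to the left of the first pole $n-2m+1\ge1$.

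For the value at $\a=0$ I use the factorization (\ref{ores}) with $k=m$. There $\ncd0$ coincides with $\Ms^\a$ and $F_{m,m}=F_m$, so, read as an identity of meromorphic functions paired against $\om$, it becomes $(\Ms^\a F_m f,\om)=\varkappa_m\,(\Qs^{\a+n-2m}f,\om)$ with $\varkappa_m=\Gam_m((n-m)/2)/\Gam_m(m/2)$. Evaluating both sides by analytic continuation at $\a=2m-n$ turns the argument of $\Qs$ into $0$, while the left-hand side becomes $\underset{\a=2m-n}{a.c.}(\Ms^\a F_m f,\om)$, which equals $\varkappa_m\,(f,\om)$ by (\ref{forq1}) of Theorem \ref{cr24}. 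Cancelling $\varkappa_m$ yields $\underset{\a=0}{a.c.}(\Qs^\a f,\om)=(f,\om)$, as required.

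The argument uses no new estimates; its entire content is the alignment of several analytic continuations, and that is where I expect the only real care to be needed. Specifically, I must check that the identities (\ref{ores}) and (\ref{forq1}), originally derived for $\a$ in a half-plane of absolute convergence, do persist as equalities of meromorphic functions, and that $\a=2m-n$ (equivalently the argument $0$ of $\Qs$) is not a pole of either side. Both points are guaranteed once one observes that $F_m f$ is again integrable and right $O(m)$-invariant (Corollary \ref{laas}), so that Lemma \ref{mkloi} and Theorem \ref{cr24} apply to it, and that the polar sets computed above match under the substitution $\a\mapsto\a+n-2m$.
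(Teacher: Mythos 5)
Your proof is correct and follows essentially the same route as the paper: both rest on the composition formula $M^{\a}F_m\sim Q^{\a+n-2m}$ (your (\ref{ores}) with $k=m$ is just the normalized form of (\ref{gty4e})) combined with the inversion formula (\ref{forq1}), and the polar-set claim via cancellation of $\Gam_m(\a/2)$ in $d_{n,m}(\a)$ is exactly the content of Lemma \ref{mkloi}. Your explicit check that $\a=0$ lies outside the polar set $\{n-2m+1,\dots\}$ is a welcome elaboration the paper leaves implicit.
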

 \begin{proof} The statement follows from (\ref{gty4e}) and (\ref {forq1})
 if we use normalization  (\ref{44z}) and (\ref{nmn}):
 \bea  \underset
{\a=0}{a.c.} \,(\Qs^\a f, \om)&=&\frac{\Gam_{m}^2 (m/2)}{\Gam_{m} (n/2)}\, \underset
{\b=2m-n}{a.c.} \,\left (\frac{M^{\b} F_{m}f}{\Gam_{m} (\b/2)}, \om \right )\nonumber\\
 &=& \frac{\Gam_{m} (m/2)}{\Gam_{m} ((n-m)/2)}\,\underset
{\b=2m -n}{a.c.} \,(\Ms^{\b} F_m f, \om)=(f,\om).\nonumber\eea
\end{proof}

Equalities (\ref{ores})   and (\ref{forq1q}) imply the following
 inversion result for the Funk transform  $F_{m,k}$.
\begin{theorem} \label{cr24} Let $f$ be an integrable right
$O(m)$-invariant function on $\vnm$, $\om \in C^\infty
(\vnm)$,  $1\le m\le k\le n-m$. Then
\be \label {forq1y}  \underset
{\a=m +k-n}{a.c.} (\ncd0 F_{m,k} f, \om)\!=\!\varkappa_k (f,\om), \qquad \varkappa_k \!=\!\frac{ \Gam_m ((n\!-\!m)/2)}{\Gam_m (k/2)}.\ee
\end{theorem}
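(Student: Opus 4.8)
The plan is to deduce this inversion formula by combining the factorization (\ref{ores}) with the limiting relation (\ref{forq1q}) and transferring the resulting identity to the point $\a=m+k-n$ via uniqueness of analytic continuation. First I would record the structural facts that make everything fit together. By Corollary \ref{laas} the function $\vp=F_{m,k}f$ belongs to $L^1(\vnk)$, and it is right $O(k)$-invariant because the orthogonality condition $u'v=0$ and the fibre measure $d_uv$ in (\ref{la3v}) depend only on the subspace $\{u\}$, which is unchanged under $u\mapsto u\gam$, $\gam\in O(k)$. Moreover, the hypothesis $1\le m\le k\le n-m$ forces $2m\le n$, so the normalized sine transform $\Qs^\b$ and Theorem \ref{maeew} are available for $f$.

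Next I would start from the Lemma (\ref{ores}), which for $Re\,\a>m-1$, $\a\neq k-m+1,k-m+2,\dots$, gives the operator identity $\ncd0 F_{m,k}f=\varkappa_k\,\Qs^{\a+n-k-m}f$ with $\varkappa_k=\Gam_m((n-m)/2)/\Gam_m(k/2)$. Pairing with an arbitrary $\om\in C^\infty(\vnm)$ yields
\[
(\ncd0 F_{m,k}f,\om)=\varkappa_k\,(\Qs^{\a+n-k-m}f,\om)
\]
on this open set of $\a$. I would then observe that both sides continue meromorphically in $\a$. For the left-hand side, $\ncd0=\del_{n,m,k}(\a)\,\cd0$, and by Corollary \ref{cr1} the function $(\Gam_m(\a/2)^{-1}\cd0\vp,\om)$ is entire; multiplying by the normalizing factor shows $(\ncd0 F_{m,k}f,\om)$ is meromorphic in $\a$ with poles only from $\Gam_m((k-\a)/2)$. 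For the right-hand side, $(\Qs^\b f,\om)$ is meromorphic in $\b$ by Theorem \ref{maeew}. By uniqueness of analytic continuation the two meromorphic functions of $\a$ coincide everywhere, so I may evaluate their common analytic continuation at $\a=m+k-n$.

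Finally, the substitution $\b=\a+n-k-m$ sends $\a=m+k-n$ to $\b=0$, so that
\[
\underset{\a=m+k-n}{a.c.}\,(\Qs^{\a+n-k-m}f,\om)=\underset{\b=0}{a.c.}\,(\Qs^\b f,\om)=(f,\om)
\]
by formula (\ref{forq1q}) of Theorem \ref{maeew}. Combining this with the continued identity above gives
\[
\underset{\a=m+k-n}{a.c.}\,(\ncd0 F_{m,k}f,\om)=\varkappa_k\,(f,\om),
\]
which is exactly the assertion.

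The only delicate point, and where I would be most careful, is the bookkeeping of the analytic continuation. I must check that $\a=m+k-n$ genuinely lies outside the convergence region of (\ref{ores}): indeed $k\le n-m$ gives $m+k-n\le 0\le m-1$, the inequality being strict except in the borderline case $m=1,k=n-1$, so analytic continuation is truly needed rather than direct substitution. I must also confirm that the excluded values $\a=k-m+1,k-m+2,\dots$ (all $\ge 1$) do not interfere with reaching the target $\a=m+k-n\le 0$, and that no pole of either side obstructs evaluation there. Since the right-hand side is regular at $\b=0$ by Theorem \ref{maeew}, the coincidence of the two meromorphic continuations forces the left-hand side to be regular at $\a=m+k-n$ as well, so the value is finite and equals $\varkappa_k(f,\om)$.
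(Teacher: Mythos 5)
Your argument is exactly the paper's: the theorem is stated there as an immediate consequence of the factorization (\ref{ores}) together with the limiting relation (\ref{forq1q}), which is precisely the combination you carry out, with the added (correct) bookkeeping of where analytic continuation is needed and why no poles obstruct evaluation at $\a=m+k-n$. The proposal is correct and takes essentially the same route.
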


 It would be  natural to find an inversion formula
 for the cosine transform $\Cs^\a_{m,k}$, $k>m$, similar to
(\ref {for1}). Regretfully it is not available, however, the following lemma enables us to reduce inversion of  $\Cs^\a_{m,k}$  to the already known procedure for $F_m$; cf. (\ref {forq1}).
\begin{lemma}
\label{cr24p} Let $f$ be an integrable right
$O(m)$-invariant function on $\vnm$, $\;\om \in C^\infty
(\vnm)$,  $1\le m\le k\le n-m$,
\[ Re \,\a >k-1, \qquad \a\neq k, k+1, k+2, \ldots\, . \]
Then
\be \label {forq1y}  \underset
{\b=-\a}{a.c.} (\ncb \Cs^\a_{m,k} f, \om)=\mu_k\,(F_{m}f, \om), \qquad \mu_k\!=\!\frac{\Gam_m (m/2)}{ \Gam_m ((n\!-\!k)/2)}.\ee
\end{lemma}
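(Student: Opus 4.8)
The plan is to identify the operator $\ncb\Cs^\a_{m,k}$, acting from $\vnm$ to $\vnm$, with a constant multiple of the Funk transform $F_m$, read in the sense of analytic continuation at $\b=-\a$. All identities below are equalities of meromorphic families in the parameters, and at the decisive point $\b=-\a$ one falls outside every domain of absolute convergence; thus each formula must be understood via analytic continuation, precisely as in Theorem \ref{gen} and Corollary \ref{nkil}. Making this passage coherent is the main obstacle, and I would ultimately anchor it in the Fourier functional equation of Theorem \ref{gen}, where analyticity in the vector parameter $\lv\in\bbc^m$ is manifest, rather than in the scalar pointwise formulas, which converge only on narrow half-spaces.

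First I would strip the dual sine transform to a dual cosine transform on the complementary manifold. Because $\Cs^\a_{m,k}f$ is right $O(k)$-invariant, the relation $|I_m-v'uu'v|=|v'\tilde u\tilde u'v|$ with $\tilde u\in\{u\}^\perp$ (cf. (\ref{robp}) and Remark \ref{lp00}) turns $\ncb$ into the unnormalized dual cosine transform $\stackrel{*}{\C}\!{}_{m,n-k}^{\b}$ on $V_{n,n-k}$, up to the explicit factor $d_{n,m,k}(\b)$. Simultaneously, writing $\Cs^\a_{m,k}=\del_{n,m,k}(\a)\,\C^\a_{m,k}$ and using the factorization (\ref{782a}) of Corollary \ref{85b}, which expresses the cosine transform evaluated on $\{u\}^\perp$ through a Funk transform, $(\C^\a_{m,k}f)(u)=\tilde d_\a\,(F_{m,n-k}M^{\a+m-k}f)(\tilde u)$, the composition takes the form
\[ \ncb\Cs^\a_{m,k}f=\const\cdot\stackrel{*}{\C}\!{}_{m,n-k}^{\b}\,F_{m,n-k}\,M^{\a+m-k}f, \]
with all scalar factors gathered into $\const$; the inner exponent is admissible since $Re\,\a>k-1$ gives $Re\,(\a+m-k)>m-1$.

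Next I would collapse the dual-cosine/Funk block by Theorem \ref{xcty} with $k$ replaced by $n-k$ (its hypotheses $1\le m\le n-k\le n-m$ hold because $m\le k\le n-m$). Formula (\ref{gty}) then yields $\stackrel{*}{\C}\!{}_{m,n-k}^{\b}F_{m,n-k}=c'_\b\,Q^{\b+k-m}$, so that $\ncb\Cs^\a_{m,k}f=\const\cdot Q^{\b+k-m}M^{\a+m-k}f$. Writing $\delta=\a+m-k$, at $\b=-\a$ the outer exponent equals $\b+k-m=-\delta$. I would then invoke (\ref{gty4}) of Corollary \ref{85b} to write $Q^{-\delta}=d_{-\delta}\,M^{2m-n-\delta}F_m$, commute $F_m$ through the inner cosine transform by (\ref{782m}), and arrive at $M^{2m-n-\delta}M^{\delta}F_m f$. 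Since $Re\,\delta>m-1$ and $\delta\neq m,m+1,\dots$ (the latter being exactly the excluded set $\a\neq k,k+1,\dots$), the inversion formula (\ref{for1}) applies with $\a$ replaced by $\delta$ and shows that the normalized double cosine transform $\Ms^{2m-n-\delta}\Ms^{\delta}$ continues analytically to the identity. Hence, in the $\S'$-sense at $\b=-\a$, the whole expression reduces to a constant multiple of $F_m f$.

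It then remains to collect the gamma-factors produced along the chain --- from $d_{n,m,k}(\b)$ and $\del_{n,m,k}(\a)$, from $\tilde d_\a$ and $c'_\b$, from $d_{-\delta}$, and from the two normalizations in (\ref{for1}) --- and to verify, using the functional equation for the beta integral $B_m$ and the functional equations of the Siegel gamma function $\Gam_m$, that they telescope to the single ratio $\mu_k=\Gam_m(m/2)/\Gam_m((n-k)/2)$; the excluded values of $\a$ guarantee that none of these factors is singular at $\b=-\a$. This bookkeeping is routine but lengthy. The genuine difficulty, as noted, is the coherent continuation of a chain of compositions, each valid only on its own parameter strip, to the common point $\b=-\a$, which I would justify by running the entire argument through the normalized analytic families of Corollaries \ref{cr1} and \ref{cr12} together with the functional equation of Theorem \ref{gen}.
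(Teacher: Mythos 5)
Your proposal is correct in substance, but it travels through the web of composition formulas along a different path than the paper. The paper's proof is shorter and runs ``backwards'': it starts from the normalized form of (\ref{gty7}), namely $\Ms^{\a+m-k}F_m f=c\,\fd\C^\a_{m,k}f$, applies the inversion (\ref{for1}) in a single auxiliary variable $\b'$ (so that $(F_m f,\om)$ sits on the left-hand side from the first line and no separate continuation of the target is needed), and then converts $\Ms^{\b'}\fd$ into the normalized dual sine transform by dualizing (\ref{782}) onto the test function $\om$; at $\b'=m-n+k-\a$ the sine exponent $\b'+n-k-m$ becomes $-\a$, which is the statement. You instead work ``forwards'': you pass to the complementary Stiefel manifold via (\ref{robp}), use (\ref{782a}) and (\ref{gty}) with $k\to n-k$ to collapse $\ncb\Cs^\a_{m,k}$ into $Q^{\b+k-m}M^{\a+m-k}$, and then at $\b=-\a$ chain the meromorphic continuation of $Q^{\gamma}$ (Lemma \ref{mkloi}/(\ref{gty4})) with the commutation (\ref{782m}) and the inversion (\ref{for1}). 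Your exponent arithmetic and hypothesis-matching are right ($\delta=\a+m-k$, $Re\,\delta>m-1$, $\delta\neq m,m+1,\dots$ exactly when $\a\neq k,k+1,\dots$), and the two continuations you need are indeed supplied by Corollary \ref{cr1} (via the complement identification) and Lemma \ref{mkloi}, so there is no gap --- but your route requires gluing two separate analytic continuations in different variables, whereas the paper's arrangement makes the continuation trivial to justify because the $\b'$-independent quantity $(F_m f,\om)$ is already isolated. The deferred constant bookkeeping is consistent with the paper's own level of detail (``a simple calculation completes the proof'').
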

\begin{proof} By (\ref{gty7}) and (\ref{nmn}),\[
\Ms^{\a+m-k}F_{m}f= c\, \fd \C^\a_{m,k} f, \qquad c=\del_{n,m} (\a+m-k)/\tilde c_\a.\]
Hence, by (\ref{for1}),
\bea
(F_{m}f, \om)&=&\underset
{\b=m-n+k-\a}{a.c.} c\,(\Ms^\b\fd \C^\a_{m,k} f,\om)\nonumber\\&=&\underset
{\b=m-n+k-\a}{a.c.} c\,\del_{n,m} (\b)\,((F_{m,k}M^\b)^*\C^\a_{m,k} f,\om),\nonumber\eea
and (\ref{782}) yields
\[
(F_{m}f, \om)=\underset
{\b=m-n+k-\a}{a.c.} c\,\del_{n,m} (\b)\,c_{n,k,m}(\b)\,(\nck \C^\a_{m,k} f,\om).\]
Now we replace operators on the right-hand side by their normalizations (\ref{n0mbsd}) and (\ref{n0mby}). A simple calculation completes the proof.
\end{proof}

The following ``pointwise'' conjecture looks natural.
\begin{conjecture} \label{8624} Let $f(v)$ be an infinitely differentiable $O(m)$-invariant function on $\vnm$.

\noindent  {\rm (a)} For every $v\in \vnm$, $(\Ms^\a f)(v)$ extends as a meromorphic
function of $\a$ with the polar set $\bbn=\{1,2,3,\ldots\}$.

\noindent  {\rm (b)} For every complex $\a\notin \bbn$,  $(\Ms^\a f)(v)$   is an infinitely differentiable $O(m)$-invariant function on $\vnm$ .

\noindent  {\rm (c)} If $\a \notin \bbn \cup \tilde \bbn, \quad \tilde \bbn = \{2m-n-1,2m-n-2,\ldots\}$, then
 \be (\Ms^{2m-\a-n}\Ms^\a f)(v)=f(v)\ee for every $v\in \vnm$.

 \noindent  {\rm (d)} The Funk transform
  \be \label {la3vf}(F_{m} f) (u)=\int_{\{v\in V_{n, m}: \, u'v=0\}} f(v)\,d_u v,
\qquad u\!\in\! V_{n, m},\ee
  can be inverted by the formula
\be (\Ms^{2m-n}F_m f)(v)=\varkappa _m f(v),\ee
 which holds for every $v\in \vnm$.
\end{conjecture}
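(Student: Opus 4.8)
The plan is to diagonalize the whole family $\{\Ms^\a\}$ simultaneously by Grassmann harmonics and to read off (a)--(d) from the resulting spectral formulas, as suggested in the remarks preceding the conjecture. The starting observation is that $\Ms^\a$ is an $O(n)$-intertwining operator: the change of variable $v\mapsto gv$ in (\ref{nmn}), together with invariance of the measure, gives $L_g\Ms^\a=\Ms^\a L_g$ for every $g\in O(n)$, where $L_g$ is the left regular representation. Since $f$ is right $O(m)$-invariant it may be viewed as a function on $G_{n,m}$, and $(O(n),O(m)\times O(n-m))$ is a Gelfand pair, so $L^2(G_{n,m})$ decomposes multiplicity-free into $O(n)$-irreducibles $H_\mu$ indexed by partitions $\mu=(\mu_1\ge\dots\ge\mu_m\ge 0)$ (cf. \cite{Ge, Str75, Str86, TT}). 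By Schur's lemma $\Ms^\a$ acts on each $H_\mu$ as a scalar $c_\mu(\a)$; these $H_\mu$ are precisely the $O(n)$-types that the even kernel $|u'v|^{\a-m}$ does not annihilate, which is why the $O(m)$-invariance is exactly the right hypothesis.

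First I would compute the eigenvalues $c_\mu(\a)$ explicitly via a higher-rank Funk--Hecke calculation: evaluate $\Ms^\a$ on a zonal generator of $H_\mu$, pass to polar coordinates (Lemma \ref{l2.3}), and reduce to the Siegel beta integral (\ref{2.6d}) and gamma function (\ref{2.4}). The expected outcome is a product of $m$ ratios of ordinary gamma functions, $c_\mu(\a)=\prod_{i=1}^m \rho_i(\a,\mu_i)$, meromorphic in $\a$ with poles contained in $\bbn$; the normalizing factor $\del_{n,m}(\a)$ in (\ref{nmn}) is arranged so that the only poles surviving are on $\bbn$. Granting this, (a) and (b) follow from convergence estimates: for smooth $f$ the harmonic coefficients decay faster than any power of $|\mu|$, while Stirling's formula applied to $c_\mu(\a)$ yields at most polynomial growth, locally uniformly for $\a$ in compact subsets of $\bbc\setminus\bbn$. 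Hence the series $(\Ms^\a f)(v)=\sum_\mu c_\mu(\a)(\mathrm{proj}_{H_\mu}f)(v)$ and all its $v$-derivatives converge absolutely and locally uniformly, producing a smooth $O(m)$-invariant function meromorphic in $\a$ with polar set $\bbn$.

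For (c), the composition $\Ms^{2m-\a-n}\Ms^\a$ acts on $H_\mu$ by $c_\mu(2m-\a-n)\,c_\mu(\a)$, and the crux is the reflection identity $c_\mu(2m-\a-n)\,c_\mu(\a)=1$ for every $\mu$, which should reduce factor by factor to an elementary functional relation for the quotients $\rho_i$ under $\a\mapsto 2m-\a-n$. This makes $\Ms^{2m-\a-n}\Ms^\a$ the identity on each $H_\mu$, hence on all smooth $O(m)$-invariant functions, refining the distributional identity (\ref{for1}). Part (d) is then a specialization: by (\ref{for1t}) the analytic continuation of $\Ms^\a$ at $\a=0$ equals $\mu_m F_m$, so $F_m$ acts on $H_\mu$ by $\mu_m^{-1}c_\mu(0)$, which is nonzero for every $\mu$ since $F_m$ is injective on $O(m)$-invariant functions (Theorem \ref{j7652}). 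Therefore $\Ms^{2m-n}F_m$ acts on $H_\mu$ by $\mu_m^{-1}c_\mu(2m-n)\,c_\mu(0)=\mu_m^{-1}=\varkappa_m$, using $\varkappa_m=\mu_m^{-1}$ and the reflection identity at $\a=0$; this is exactly (\ref{forq1}) read pointwise, and it also matches (\ref{forq1q}).

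The main obstacle is the explicit eigenvalue computation together with the uniform growth bound. For $m=1$ these are the classical Funk--Hecke theorem and a single Stirling estimate, but for $m>1$ one must carry out the zonal evaluation in the presence of the composite power function and the nonabelian harmonic analysis on $G_{n,m}$, and then control a product of $m$ gamma quotients as $\mu$ ranges over all partitions. Establishing these two ingredients---the closed form of $c_\mu(\a)$ and a polynomial bound $|c_\mu(\a)|\le C_\a(1+|\mu|)^{N}$ away from $\bbn$---is where the real difficulty lies; once they are in hand, everything else is bookkeeping built on the distributional results (\ref{for1})--(\ref{forq1q}) already proved.
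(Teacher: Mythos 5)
The statement you are proving is labeled as a \emph{conjecture} in the paper: the author explicitly presents it as open, remarks only that the $m=1$ case is known via spherical harmonics, and writes that he ``expects'' a proof can be given using the harmonic analysis of Gelbart, Ton-That, and Sugiura. So there is no proof in the paper to compare against, and your proposal must be judged as a stand-alone argument.

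As such, it has a genuine gap: the two ingredients you yourself identify as ``where the real difficulty lies'' --- the closed-form eigenvalue $c_\mu(\a)=\prod_{i=1}^m\rho_i(\a,\mu_i)$ with polar set contained in $\bbn$, and the locally uniform polynomial bound $|c_\mu(\a)|\le C_\a(1+|\mu|)^N$ away from $\bbn$ --- are precisely the content of the conjecture, and you do not supply them. The framework around them (multiplicity-free decomposition of $L^2(G_{n,m})$ under $O(n)$ via the Gelfand pair $(O(n),O(m)\times O(n-m))$, Schur's lemma, rapid decay of harmonic coefficients of smooth functions) is correct and is the route the author envisions, but everything substantive is deferred. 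For $m>1$ the zonal spherical functions on $G_{n,m}$ depend on $m$ principal angles, so the ``higher-rank Funk--Hecke calculation'' is a genuinely multivariate integral against the kernel $|u'v|^{\a-m}$; it does not reduce to the beta integral (\ref{2.6d}) by a routine polar-coordinate change, and no product formula for the spectrum of $\Ms^\a$ on general $H_\mu$ is derived or cited. Likewise the reflection identity $c_\mu(2m-\a-n)\,c_\mu(\a)=1$, on which (c) and (d) entirely rest, is asserted to ``reduce factor by factor'' to an elementary relation, but without the explicit $\rho_i$ this is circular: the distributional identity (\ref{for1}) only gives the reflection identity $\mu$-by-$\mu$ \emph{after} one already knows that $\a\mapsto c_\mu(\a)$ is meromorphic with the claimed polar set, which is again the unproven point (a).

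A secondary issue: even granting the product formula, part (a) claims the polar set is exactly $\bbn$, which requires not only locating the poles of each $c_\mu$ but also ruling out cancellation in the sum $\sum_\mu c_\mu(\a)(\mathrm{proj}_{H_\mu}f)(v)$ for the particular $f$ at hand; your argument addresses only the inclusion of the polar set in $\bbn$. In short, the proposal is a plausible and well-organized research plan --- essentially the one the author himself sketches --- but it is not a proof, and the conjecture remains open as far as this document is concerned.
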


In the case $m=1$ these statements are known  and can be obtained using decomposition in spherical harmonics; see, e.g., \cite{Ru2, Ru7}. We expect that Conjecture \ref{8624} can be proved using harmonic analysis developed in \cite{Ge, TT} together with results from \cite{Su}.

\section{The method of Riesz potentials}

We already know that the Funk transform can be formally inverted as
 $F_m^{-1}=\varkappa _m^{-1}\,  \underset
{\a=2m -n}{a.c.} \, \Ms^{\a}F_m f$. There are many ways to realize this analytic continuation. One of them amounts to Blaschke, Radon, Fuglede, and Helgason \cite{Rad, Fug, Hel}  and consists of two steps: (a) We apply a certain back-projection operator and  reduce the problem to inversion of the corresponding potential; (b) We invert that potential operator.
In the case of the Funk transform on the sphere, the potential operator is realized as  the sine transform, explicit inversion of which is pretty sophisticated; see, e.g., \cite{Hel, Ru4}. In the higher-rank case the situation  is much more complicated; cf. \cite[Theorem 3.4]{Gri}, \cite[Theorem 10.4]{Ka}.
  Semyanistyi \cite {Se}  suggested to express the spherical sine transform  through the spatial Riesz potential. In this section we generalize his idea for the higher-rank case.

  There is a substantial difference between  the cases, when $n-k-m$ is an even number and  an odd number.  This phenomenon, which is well-known in the  case $m=1$, is related to the so-called local and non-local inversion formulas.

\subsection {Local inversion formulas for the Funk transform}

It is instructive to start with the particular case $k=m$, when evenness of $n-k-m$ is equivalent to the evenness of $n$.
\begin{theorem}\label{reven}
Let  $f$ be a right $O(m)$-invariant function in $L^1 (\vnm)$, $ 2m
\le n$. If $n$ is even and
\[c_m=\frac{(-1)^{m(n/2 -m)}\,2^{m(2m-n)} \,\Gam^2_m(m/2)}{\Gam^2_m((n-m)/2)},\]
then the Funk
 transform $\vp=F_m f$ can be inverted by the formula
\be \label {pra} E_{m-n} f=c_m\,\Del^{n/2 -m} \,E_{-m}F_m \vp\qquad \mbox{\rm (in
the $S'$-sense).}\ee  If the right-hand side of (\ref{pra}) is locally
integrable in the neighborhood of the Stiefel manifold $V_{n,m}$,
then  (\ref{pra}) holds pointwise a.e. on $\vnm$.
\end{theorem}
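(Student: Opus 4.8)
The plan is to verify (\ref{pra}) as an equality of tempered distributions by pairing both sides with $\F\phi$, $\phi\in\S(\Ma)$, and to exploit two features specific to the even case: that $\Del$ is formally self-adjoint, and that $n$ even together with $2m\le n$ makes $n/2-m$ and $(n-2m)/2$ nonnegative integers, so that $\Del^{n/2-m}$ is a genuine (local) differential operator and $|x|_m^{n-2m}=\det(x'x)^{(n-2m)/2}$ is a polynomial. First I would move the power of the Cayley--Laplacian onto the test function, $(\Del^{n/2-m}E_{-m}F_m\vp,\F\phi)=(E_{-m}F_m\vp,\Del^{n/2-m}\F\phi)$, and invoke the Fourier-multiplier description of $\Del$ (multiplication by $(-1)^m|y|_m^2$), which in the reciprocal direction reads $\Del_y\F\phi=(-1)^m\F(|x|_m^2\phi)$. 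Iterating gives $\Del^{n/2-m}\F\phi=(-1)^{m(n/2-m)}\F(|x|_m^{n-2m}\phi)$, where $|x|_m^{n-2m}\phi\in\S(\Ma)$ precisely because the weight is a polynomial.

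The heart of the argument is a double application of the functional equation in Corollary \ref{nkilz}. Note that $\vp=F_mf$ is integrable (Corollary \ref{laas}) and right $O(m)$-invariant, hence admissible. Applying Corollary \ref{nkilz} with $g=\vp$ to the test function $|x|_m^{n-2m}\phi$ turns $(E_{-m}F_m\vp,\F(|x|_m^{n-2m}\phi))$ into $\tilde c_0\,(E_{m-n}\vp,\,|x|_m^{n-2m}\phi)$. Here I would use the homogeneity identity $|x|_m^{n-2m}E_{m-n}\vp=E_{-m}\vp$ (since $|x|_m^{n-2m}=|r|^{(n-2m)/2}$ and, by (\ref{132}), $E_\lam$ is multiplication by $|r|^{\lam/2}$ on the radial part), reducing the pairing to $\tilde c_0\,(E_{-m}F_mf,\phi)$. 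A second application of Corollary \ref{nkilz}, now with $g=f$, evaluates $(E_{-m}F_mf,\phi)$: writing $\phi=\F(\F^{-1}\phi)$ and using that $E_{m-n}f$ is even together with (\ref{check}) to pass between $\F$ and $\F^{-1}$, one gets $(E_{-m}F_mf,\phi)=(2\pi)^{-nm}\tilde c_0\,(E_{m-n}f,\F\phi)$.

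Assembling the steps yields $(\Del^{n/2-m}E_{-m}F_m\vp,\F\phi)=(-1)^{m(n/2-m)}(2\pi)^{-nm}\tilde c_0^{\,2}\,(E_{m-n}f,\F\phi)$ for all $\phi$, so $c_m=(-1)^{m(n/2-m)}(2\pi)^{nm}\tilde c_0^{-2}$; substituting $\tilde c_0=2^{m(n-m)}\pi^{nm/2}\Gam_m((n-m)/2)/\Gam_m(m/2)$ and simplifying $2^{\,nm-2m(n-m)}=2^{m(2m-n)}$ reproduces exactly the stated constant. For the final assertion, both sides of (\ref{pra}) are homogeneous of the same degree $m(m-n)$ in $x$ (the forward computation shows $\Del^{n/2-m}$ lowers the degree of $E_{-m}F_m\vp$, which is $-m^2$, by $m(n-2m)$, giving $m(m-n)$); hence if the right-hand side is locally integrable near $\vnm$, the distributional identity forces a.e. pointwise equality of the two homogeneous functions on a neighborhood of $\vnm$, and restricting to $r=I_m$, where $E_{m-n}f$ equals $f$, gives (\ref{pra}) pointwise a.e.

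I expect the main delicacy to be the bookkeeping that keeps every distributional step \emph{exact} rather than merely valid after analytic continuation; this is exactly what the evenness of $n$ buys, since it converts the would-be fractional power $\Del^{(n-2m)/2}$ into an honest local operator and the weight $|x|_m^{n-2m}$ into a polynomial preserving $\S(\Ma)$. The parity and $\F$-versus-$\F^{-1}$ juggling, together with the requirement that every test function be presented in the form $\F(\text{Schwartz})$ so that Corollary \ref{nkilz} applies, are the places where sign and constant errors are easiest to make, and tracking them carefully is what pins down the precise value of $c_m$.
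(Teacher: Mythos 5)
Your argument is correct and coincides with the paper's own proof: both pair with $\F\phi$, move $\Del^{n/2-m}$ onto the test function via the Fourier-multiplier symbol $(-1)^m|x|_m^2$ of $\Del$, and apply the functional equation (\ref{ceq5yq}) of Corollary \ref{nkilz} twice --- once to $\vp=F_mf$ with the polynomial weight $|x|_m^{n-2m}$ absorbed into the $E$-weight, and once to $f$ itself, using evenness to pass between $\F$ and $\F^{-1}$. The constant you obtain, $(-1)^{m(n/2-m)}(2\pi)^{nm}\tilde c_0^{-2}$, simplifies to exactly the paper's $c_m$.
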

\begin{proof} By (\ref{ceq5yq}) for $\phi \in S(\Ma)$ we have
\[(-1)^{ m \ell}(\Del^\ell \,E_{-m}F_m \vp, \hat \phi)\!=\!\tilde c_0 \,(E_{m-n} \vp,|x|_m^{2\ell}\phi)\!=\!\tilde c_0 \,(E_{m-n+2\ell} F_m f,\phi). \]
We can choose $2\ell =n-2m$ to get $\tilde c_0 \,(E_{-m}
F_mf,\phi)$. By  (\ref{ceq5yq})
 this coincides with $\tilde c_0 ^2\,(2\pi)^{-nm}\,(E_{m-n} f,\hat\phi)$, and the result follows.
\end{proof}

The next statement is more general.
\begin{theorem}\label{kreven}
Let  $f$ be a right $O(m)$-invariant function in $L^1 (\vnm)$, $ 1\le m\le k \le n-m$.
 If $n-k-m$ is an even number and
\[c_{m,k}=\frac{(-1)^{m\ell}\,4^{-m\ell} \,\Gam_m(m/2)\,\Gam_m(k/2)}{\Gam_m((n-m)/2)\, \Gam_m((n-k)/2)},\quad \ell=(n-k-m)/2,\]
then the Funk
 transform $\vp=F_{m,k}f$ can be inverted by the formula
\be \label {prak} E_{m-n} f=c_{m,k}\,\Del^{\ell} \,E_{-k}\fd \vp\qquad \mbox{\rm (in
the $S'$-sense).}\ee If the right-hand side of (\ref{prak}) is locally
integrable in the neighborhood of  $V_{n,m}$,
then  (\ref{prak}) holds pointwise a.e. on $\vnm$.
\end{theorem}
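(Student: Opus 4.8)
The plan is to imitate the proof of Theorem~\ref{reven}, with the $k=m$ functional equation (\ref{ceq5yq}) replaced by its general counterpart (\ref{eq5yq}) from Theorem~\ref{nkild}, and then to collapse the complementary Radon transform $A_{k,m}$ back to the $k=m$ setting through a composition identity. Since $n-k-m$ is even and $k\le n-m$, the number $\ell=(n-k-m)/2$ is a nonnegative integer, so $\Del^\ell$ is a genuine power of the Cayley-Laplace operator (\ref{K-L}) and all assertions about local behavior make sense. By Corollary~\ref{laas} the function $\vp=F_{m,k}f$ is integrable and right $O(k)$-invariant, hence right $O(m)$-invariant, so Theorem~\ref{nkild} applies to it; every pairing below is against $\phi\in\S(\Ma)$ and is read in the $\S'$-sense.

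First I would move $\Del^\ell$ onto the test function via the Fourier transform. Because $\Del$ is formally self-adjoint and $\Del(\F\phi)=\F((-1)^m|x|_m^2\phi)$ by (\ref{K-L}),
\[
(\Del^\ell E_{-k}\fd\vp,\hat\phi)=(-1)^{m\ell}\,(E_{-k}\fd\vp,\F(|x|_m^{2\ell}\phi)).
\]
Applying (\ref{eq5yq}) with $\phi$ replaced by $|x|_m^{2\ell}\phi$ and noting that multiplication by $|x|_m^{2\ell}=|r|^\ell$ sends $E_{k-n}$ to $E_{k-n+2\ell}=E_{-m}$ (here $2\ell=n-k-m$), I obtain
\[
(\Del^\ell E_{-k}\fd\vp,\hat\phi)=(-1)^{m\ell}\,\tilde c\,(E_{-m}A_{k,m}\vp,\phi),
\]
where $\tilde c$ is the constant of Theorem~\ref{nkild}. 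This is the precise analogue of the opening line of the proof of Theorem~\ref{reven}, except that the complementary Radon transform $A_{k,m}\vp$ has appeared in place of $\vp$.

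The heart of the argument, and what I expect to be the main obstacle, is the composition identity
\[
A_{k,m}F_{m,k}f=F_m f.
\]
By (\ref{akm}), $(A_{k,m}F_{m,k}f)(v)$ integrates $f(w)$ over all configurations in which $a'$ is a $(k-m)$-frame in $v^\perp$ and $w$ is an $m$-frame orthogonal to $\mathrm{span}(v,a')$. Since the integrand depends on $w$ alone, I would interchange the two Stiefel integrations by the invariance argument of Lemma~\ref{hbi1}: integrating first over the $(k-m)$-frames $a'$ in $w^\perp\cap v^\perp$ against the invariant probability measure contributes total mass $1$, so the iterated integral reduces to $\int_{w\perp v}f(w)\,d_v w=(F_m f)(v)$. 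The use of normalized (probability) measures throughout is exactly what pins the composition constant to $1$; as a consistency check, the normalizations of Corollary~\ref{laas} and Lemma~\ref{exix} give $\int A_{k,m}F_{m,k}f\,d_*v=\int F_{m,k}f\,d_*u=\int f\,d_*v=\int F_m f\,d_*v$.

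Granting this, the previous display becomes $(-1)^{m\ell}\tilde c\,(E_{-m}F_m f,\phi)$, and I would close the loop exactly as in Theorem~\ref{reven}. Writing $\phi=\F(\F^{-1}\phi)$, invoking the $k=m$ equation (\ref{ceq5yq}), and using that $E_{m-n}f$ is even (because $-I_m\in O(m)$ and $f$ is right $O(m)$-invariant) yields $(E_{-m}F_m f,\phi)=\tilde c_0(2\pi)^{-nm}(E_{m-n}f,\hat\phi)$. Since $\hat\phi$ runs over all of $\S(\Ma)$, I conclude
\[
\Del^\ell E_{-k}\fd\vp=(-1)^{m\ell}\,\tilde c\,\tilde c_0\,(2\pi)^{-nm}\,E_{m-n}f
\]
in $\S'$. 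A direct evaluation of $(-1)^{m\ell}\tilde c\,\tilde c_0(2\pi)^{-nm}$, in which the powers of $2$ collect to $2^{2m\ell}$ and the Siegel gamma factors recombine, inverts to the stated constant $c_{m,k}$ (whence the factor $4^{-m\ell}$), giving (\ref{prak}). The final pointwise a.e. claim then follows from the same local-integrability remark used for Theorem~\ref{reven}.
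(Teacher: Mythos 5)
Your proof is correct --- the constant works out, since $(-1)^{m\ell}\,\tilde c\,\tilde c_0\,(2\pi)^{-nm}=(-1)^{m\ell}4^{m\ell}\,\Gam_m((n-k)/2)\,\Gam_m((n-m)/2)\big/\Gam_m(k/2)\,\Gam_m(m/2)$, whose reciprocal is exactly $c_{m,k}$ --- but the middle of your argument takes a genuinely different route from the paper's. The paper never invokes Theorem \ref{nkild} here: it starts from Corollary \ref{nkilf}, which writes $E_{-k}\fd\vp$ as the analytic continuation at $\a=0$ of the normalized dual cosine transform, then substitutes the composition formula (\ref{gty7}), $\cd0 F_{m,k}f=\tilde c_\a\,M^{\a+m-k}F_m f$, applies the functional equation (\ref{eq5ya}) for $M^{\b}$, and only then closes with (\ref{ceq5yq}). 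You instead apply the $\a=0$ functional equation (\ref{eq5yq}) directly, which forces you to evaluate $A_{k,m}F_{m,k}f$; the identity $A_{k,m}F_{m,k}f=F_m f$ is nowhere stated in the paper and is the one piece of genuinely new input you must supply. It is true, and your invariance argument is the right one, though the justification is a disintegration rather than a literal Fubini interchange: the joint measure on pairs consisting of a $(k-m)$-frame in $v^{\perp}$ and an $m$-frame $w$ orthogonal to $v$ and to that frame is invariant under the stabilizer of the frame $v$ in $O(n)$, so its marginal in $w$ is the unique invariant probability measure on $\{w:w'v=0\}$, which is precisely $(F_mf)(v)$; the mass-one consistency check via Corollary \ref{laas} and Lemma \ref{exix} then pins the constant to $1$. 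As a cross-check, your identity is equivalent to the agreement of the constant $d_2$ in the paper's proof with the constant $\tilde c$ of Theorem \ref{nkild}, and indeed $d_2=\tilde c$. What your route buys is a transparent geometric reduction to $F_m$ and the avoidance of analytic continuation in $\a$ in the main computation; what the paper's route buys is that every ingredient is already proved earlier in the text.
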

\begin{proof} By (\ref{ceq5yf}) and (\ref{gty7}),
$$
(-1)^{ m \ell}(\Del^\ell \,E_{-k}\fd \vp, \hat \phi)=d _0 \,  \underset
{\a=0}{a.c.} \,\left (\frac{E_{\a -k}  \cd0 \vp}{\Gam_{m} (\a/2)},  (-1)^{ m \ell} \Del^\ell \hat\phi\right)$$
$$= d_1\,  \underset
{\a=0}{a.c.} \,\left (\frac{E_{\a -k}  M^{\a+m-k}
F_{m}f}{\Gam_{m} ((\a+m-k)/2)},  (-1)^{ m \ell} \Del^\ell \hat\phi\right),$$
$$ d_1= \frac{\Gam_m ((n-k)/2)\,\Gam_{m} (m/2)\,}{\Gam_m (n/2)}.$$
Owing to (\ref{eq5ya}), for $2\ell =n-k-m$, this expression can be written as
\[d_2\,  \underset
{\a=0}{a.c.} \,(E_{k-n-\a} F_{m}f, |x|_m^{2\ell}\phi)=d_2\,(E_{-m}
F_mf,\phi), \]\[ d_2=2^{m(n-k)}\, \pi^{nm/2}\,\Gam_m ((n-k)/2)/\Gam_{m} (k/2).\]
Finally, by (\ref{ceq5yq}),
\[
(-1)^{ m \ell}(\Del^\ell \,E_{-k}\fd \vp, \hat \phi)=\tilde c_0\, d_2\,(E_{m-n} f,\check\phi)= \frac{\tilde c_0\, d_2}{(2\pi)^{nm}}\,(E_{m-n} f,\hat\phi).\]
This gives the result.
\end{proof}

\subsection {Sine transforms and non-local inversion formulas}

Theorems \ref{reven} and  \ref{kreven} show that when $n-k-m$ is  odd, we have to deal with  fractional powers of the
operator $\Del$. The latter are realized by the
Riesz potential  $I^\a$; see Section \ref{se4}.  For the following we need some more preparation.

\subsubsection {The Semyanistyi-Lizorkin-Samko spaces $\Phi_V$}

 From the Fourier
transform formula (\ref{fou})
 it is evident that the Schwartz class $\S$ is not well-adapted for Riesz potentials,
 because $\S$  is not invariant under multiplication by
$|y|_m^{-\a}$. To get around this difficulty, we choose another
space of test functions.
  Let $\Psi=\Psi(\Ma)$ be the collection
of all functions $\psi (y) \in \S(\Ma)$ vanishing on the manifold
\be\label{sets} V \!= \!\{y\in \Ma : \rank (y)<m \} \! =
\!\{y\in \Ma : |y'y| \!= \!0 \}\ee with all derivatives. The manifold $V$ is a cone in $\bbr^{nm}$ with
vertex $0$.  Let $\Phi =\Phi(\Ma)$ be the
 Fourier image of $\Psi$. Since the Fourier transform
 is an automorphism of $\S$, then $\Phi$ is a closed linear subspace of $\S$, which is isomorphic to $\Psi$.

The spaces  $\Phi$ and $\Psi$ were introduced by V.I. Semyanistyi
[Se] in the case $m=1$.  They have proved to be very useful
in integral geometry and real analysis. Further generalizations and applications are mainly due to Lizorkin and Samko; see  \cite{Li, Ru96, Sa1, SKM} on this subject.

The following characterization of the space $\Phi$ is a consequence
of a  more general result by Samko \cite{Sa1}.
\begin{theorem} The Schwartz function $\phi (x)$ on $\Ma$ belongs
to the space $\Phi$ if and only if it is orthogonal to all
polynomials  $p(x)$ on any hyperplane $\t$ in $\bbr^{nm}$ having the
form $\t=\{x: \tr (a'x)=c \}, \; a \in V$: \be \int_{\t} p(x) \phi
(x)d\mu(x)=0,\ee $d\mu(x)$ being the induced Lebesgue measure on
$\t$.
\end{theorem}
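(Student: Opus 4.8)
The plan is to pass to the Fourier side and translate both conditions into statements about one‑dimensional Fourier transforms of hyperplane integrals. By definition $\phi\in\Phi$ means $\psi=\F\phi\in\Psi$, i.e. $\psi$ vanishes together with all its derivatives at every point of the cone $V=\{y\in\Ma:\rank(y)<m\}$ from (\ref{sets}). Differentiating (\ref{ft}) under the integral sign, which is legitimate for $\phi\in\S(\Ma)$, gives $\partial^\beta\psi(a)=i^{|\beta|}\int_{\Ma}x^\beta\,e^{\tr(ia'x)}\phi(x)\,dx$ for every multi-index $\beta$ over the $nm$ entries. Hence $\phi\in\Phi$ is equivalent to the vanishing of all these moments,
\[ \intl_{\Ma} x^\beta\, e^{\tr(ia'x)}\,\phi(x)\,dx=0\qquad\text{for all }a\in V\text{ and all }\beta, \]
and, since the monomials $x^\beta$ span the polynomials, this is in turn equivalent to $\int_{\Ma}p(x)\,e^{\tr(ia'x)}\phi(x)\,dx=0$ for every polynomial $p$ and every $a\in V$.

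First I would exploit that $V$ is a cone: if $a\in V$ then $\lambda a\in V$ for every $\lambda\in\bbr$. Fixing $a\in V$, $a\neq0$, I identify $\tr(a'x)$ with the Euclidean inner product $\langle a,x\rangle$ on $\bbr^{nm}\cong\Ma$ and slice $\Ma$ by the level sets $\tau_c=\{x:\tr(a'x)=c\}$. For polynomial $p$ and $\phi\in\S(\Ma)$ the product $p\phi$ is Schwartz, so its projection $g_p(c)=\int_{\tau_c}p(x)\phi(x)\,d\mu(x)$ is a Schwartz function of the single variable $c$; Fubini then yields
\[ \intl_{\Ma}p(x)\,e^{\tr(i\lambda a'x)}\phi(x)\,dx=\const\intl_{\bbr} e^{i\lambda c}\,g_p(c)\,dc, \]
that is, up to a positive constant depending only on $a$, the left-hand side is the one-dimensional Fourier transform of $g_p$ evaluated at $\lambda$.

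Running $a$ through the ray $\{\lambda a:\lambda\in\bbr\}\subset V$ is the same as running $\lambda$ through $\bbr$. Therefore the condition $\phi\in\Phi$ says exactly that $\widehat{g_p}(\lambda)=0$ for all $\lambda$, for every polynomial $p$ and every direction $a\in V$; by Fourier uniqueness on $\bbr$ this is equivalent to $g_p\equiv0$, i.e. $\int_{\tau_c}p(x)\phi(x)\,d\mu(x)=0$ for every $c$, every $p$, and every $a\in V$, which is precisely the asserted orthogonality. The argument is reversible: if the orthogonality holds then $g_p\equiv0$ for each fixed direction, hence $\widehat{g_p}\equiv0$, so the moment at the single point $a$ (take $\lambda=1$) vanishes for every $\beta$ and every $a\in V$, giving $\psi\in\Psi$ and $\phi\in\Phi$. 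Thus both implications follow at once, reproducing in the matrix setting the general characterization of Samko \cite{Sa1}.

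The hard part will be the bookkeeping around the cone structure: the moment conditions must be seen to encode the vanishing of the entire one-parameter family $\lambda\mapsto\widehat{g_p}(\lambda)$, not merely one value, and this is exactly where the scale-invariance $\lambda a\in V$ is indispensable — without it one controls $\widehat{g_p}$ only at a single point and cannot invert. The remaining items are routine for Schwartz data: that the projection $g_p$ of the Schwartz function $p\phi$ is itself Schwartz in $c$ (so Fourier inversion applies), and that differentiation under the integral in (\ref{ft}) and the application of Fubini are justified. I note that the singular, lower-rank locus of $V$ plays no role here, since the slicing is always carried out along a fixed direction $a$, so no regularity of $V$ as a variety is needed.
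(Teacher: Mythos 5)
Your proof is correct, but it necessarily takes a different route from the paper, because the paper gives no proof at all: it merely remarks that the characterization ``is a consequence of a more general result by Samko \cite{Sa1}.'' You supply a complete, self-contained argument for exactly the special case needed. The reduction is the right one: $\phi\in\Phi$ is equivalent to the vanishing of all moments $\int_{\Ma}x^\beta e^{\tr(ia'x)}\phi(x)\,dx$ for $a\in V$, and for a fixed nonzero $a$ the moments taken along the whole line $\bbr a$ assemble into the one-dimensional Fourier transform of the hyperplane projection $g_p$, so Fourier uniqueness on $\bbr$ converts ``vanishing on the line'' into $g_p\equiv 0$. You correctly isolate the one structural fact that makes this work --- $V$ is a cone, so $\bbr a\subset V$ whenever $a\in V$ --- which is what lets you control the entire function $\widehat{g_p}$ rather than a single value; no regularity of $V$ as a variety is used. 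Two harmless loose ends are worth a sentence each: the point $a=0\in V$ yields a degenerate ``hyperplane,'' but its moment conditions reduce to $\widehat{g_p}(0)=0$ and are subsumed by the case $a\neq 0$; and the constant relating $\int_{\Ma}p\phi\,e^{i\lambda\tr(a'x)}\,dx$ to $\widehat{g_p}(\lambda)$ is the coarea factor $1/|a|$, which does not affect vanishing. What your approach buys is independence from \cite{Sa1} and an elementary, checkable proof; what the paper's citation buys is access to Samko's more general vanishing/division theory, which applies to sets that need not be cones.
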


When $m=1$, the space $\Phi$ consists of Schwartz functions
which are orthogonal to all polynomials on $\bbr^{n}$.

 We denote by  $\Phi'$ the space of all semilinear
continuous functionals on $\Phi$. Two $\S'$-distributions that coincide in the
$\Phi'$-sense, differ from each other by an arbitrary
$\S'$-distribution with the Fourier transform supported by $V$.
Since for any complex $\a$, multiplication by $|y|_m^{-\a}$ is an
automorphism of $\Psi$, then, according to the general theory
\cite{GSh2}, $I^\a$ is an automorphism of $\Phi$, and we have \be
\label{fof}\F[I^\a \phi](y)=|y|_m^{-\a}\F[\phi](y), \qquad \phi \in
\Phi.\ee This
 gives  \be
\label{ff}\F[I^\a f](y)=|y|_m^{-\a}\F[f](y)\ee for any
$\Phi'$-distribution $f$.
For $k$ even, the Riesz potential $I^kf$ can be
inverted  by repeated
application  of the Cayley-Laplace operator $ \Del$ in the sense of $\Phi'$-distributions.

\subsubsection{Inversion of the sine transform}

\begin{lemma}\label{re7k} Let $f $ be  an integrable right $O(m)$-invariant function on
 $\vnm$, $2m\le n$. Then for  any $\a \in \bbc$,
\be\label {nn8} a.c. \, E_{\a+m-n}\Qs^\a f= 2^{\a m}\,  a.c. \, I^\a
E_{m-n} f \ee in the $\Phi'$-sense.
\end{lemma}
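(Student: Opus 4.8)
The plan is to test both sides of (\ref{nn8}) against $\F\phi$ with $\phi\in\Psi$, so that $\F\phi\in\Phi$ and every manipulation involving the singular factor $|y|_m^{-\a}$ is legitimate, and then to reduce each side to the Fourier functional equations (\ref{eq5ya}) and (\ref{ceq5yq}) together with the multiplier identity (\ref{ff}). Since the Fourier transform is injective on $\Phi'$, it suffices to check that the two sides produce the same pairing with every such $\F\phi$, modulo distributions whose Fourier transform is supported on the cone $V$ of (\ref{sets}).

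First I would rewrite the left-hand side by means of the composition formula. In normalized form, (\ref{gty4e}) gives $\Qs^\a f=d_{n,m}(\a)\,Q^\a f$ with $Q^\a f=d_\a\,M^\b F_m f$, where $\b=\a+2m-n$; hence
\[
E_{\a+m-n}\Qs^\a f=d_{n,m}(\a)\,d_\a\,E_{\b-m}M^\b F_m f.
\]
Note that $F_m f$ is integrable and right $O(m)$-invariant (Corollary \ref{laas}), so (\ref{eq5ya}) applies to $g=F_m f$ with parameter $\b$. Pairing against $\F\phi$ and invoking (\ref{eq5ya}) turns the right-hand side into a constant multiple of $(E_{m-\b-n}F_m f,\phi)$. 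Since $m-\b-n=-\a-m$ and $|r|^{1/2}=|x|_m$, I would use the pointwise identity $E_{-\a-m}F_m f=|x|_m^{-\a}E_{-m}F_m f$ to expose the weight $|x|_m^{-\a}$, which is precisely the weight generated by the Riesz potential.

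For the right-hand side I would compute $(2^{\a m}I^\a E_{m-n}f,\F\phi)$ using the multiplier identity (\ref{ff}) and the Parseval normalization (\ref{pars}) to transfer $|y|_m^{-\a}$ onto the admissible test function, and then apply Corollary \ref{nkilz}, i.e. (\ref{ceq5yq}), to replace $\F[E_{m-n}f]$ by $E_{-m}F_m f$. Both sides then collapse to the same core pairing against $|x|_m^{-\a}E_{-m}F_m f$, so the proof reduces to matching constants: collecting $d_{n,m}(\a)$, $d_\a$, the ratio $\Gam_m(\b/2)/\Gam_m((m-\b)/2)=\Gam_m(\b/2)/\Gam_m((n-m-\a)/2)$, the factor $\check c_2(\b)$ from (\ref{eq5ya}), the factor $\tilde c_0$ from (\ref{ceq5yq}), and the $(2\pi)^{nm}$ from Fourier inversion. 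All Siegel gamma functions cancel and leave exactly $2^{\a m}$. A convenient consistency check is $\a=0$: there $\Qs^0 f=f$ by Theorem \ref{maeew} (see (\ref{forq1q})) and $I^0$ is the identity, so both sides equal $E_{m-n}f$, in agreement with $2^{\a m}|_{\a=0}=1$.

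The main obstacle, and the reason the identity holds only in the $\Phi'$-sense, is the singularity of $|y|_m^{-\a}$ on $V=\{y:\rank(y)<m\}$: the multiplier formula (\ref{ff}) is valid only for test functions vanishing to infinite order on $V$, so a priori the two sides may differ by an $\S'$-distribution whose Fourier transform is supported on $V$. The second delicate point is the bookkeeping of analytic continuation: (\ref{eq5ya}) and (\ref{ceq5yq}) hold by a.c. in their parameters, and the poles and zeros of $\Gam_m(\a/2)$, $\Gam_m(\b/2)=\Gam_m((\a+2m-n)/2)$ and $\Gam_m((n-m-\a)/2)$ must be tracked so that no spurious poles survive in the final constant. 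I would carry out the matching first on the common strip where all the pairings converge absolutely and then extend in $\a$ by uniqueness of analytic continuation; the conjugation arising from the Hermitian pairing $(\cdot,\cdot)$ is handled, as usual, by verifying the identity for real $\a$ and continuing analytically.
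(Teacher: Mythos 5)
Your proposal is correct and follows essentially the same route as the paper: reduce the left-hand side via the factorization $Q^\a f=d_\a M^{\a+2m-n}F_m f$ from (\ref{gty4}), apply the Fourier functional equation (\ref{eq5ya}) to $g=F_m f$, expose the weight $|x|_m^{-\a}$ via $E_{-\a-m}=|x|_m^{-\a}E_{-m}$ and the multiplier identity on $\Phi$, convert back with (\ref{ceq5yq}), and finish by analytic continuation from the strip of absolute convergence. The only (immaterial) difference is that you meet in the middle from both sides rather than transforming the left side all the way into the right, and your attention to the $\Phi'$ caveat and the pole bookkeeping matches the paper's reasoning.
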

\begin{proof} Let $\phi \in
\Phi,\; \psi=\check \phi \in \Psi$. We first suppose that \be\label
{038} Re \, \a>n-m-1; \qquad \a\!\neq\! n\!-\!m,  n\!-\!m\!+\!1,
n\!-\!m \!+\!2, \ldots .\ee Then, owing to (\ref {44z}),
(\ref{gty4}), and (\ref{eq5ya}),  \bea \label {oiy}\qquad
(E_{\a+m-n}\Qs^\a f,\phi)&=&d_{n,m}(\a)\, d_\a (E_{\a+m-n}M^{\a+2m
-n}
F_{m}f, \hat \psi)\nonumber\\
&=& \label {krob}\frac{2^{(\a+m)m}\, \pi^{nm/2}\, \Gam_m (m/2)}{
\Gam_m ((n-m)/2)}\,(E_{-m}F_{m}f,|x|_m^{-\a}\psi).\eea Since
$|x|_m^{-\a}\psi (x)\!=\!(2\pi)^{-nm}|x|_m^{-\a}\hat \phi
(-x)\!=\!(2\pi)^{-nm} (I^\a \phi)^\wedge (-x)$, then (\ref{ceq5yq})  yields
\[ (E_{\a+m-n}\Qs^\a f,\phi)=2^{\a m}\,(E_{m-n}f,I^\a
\phi)= 2^{\a m}\,(I^\a E_{m-n}f,\phi).\] An expression in
(\ref{krob}) is an entire function of $\a$. Hence, the result
follows by analytic continuation.
\end{proof}

Lemma \ref{re7k} enables us to reconstruct  $f$ from $Q^\a f$ in the $\Phi'$-sense.
\begin{corollary}\label {kkli}  Let $g=Q^\a f$, where $f $ is  an integrable right $O(m)$-invariant function on
 $\vnm$, $2m\le n$,
 \[Re \, \a >m-1; \qquad \a\!\neq\! n\!-\!2m\!+\!1, n\!-\!2m \!+\!2,
\ldots . \] We set $\a=2\ell -\gam$, where $\ell$ is a positive
integer. The following inversion formula holds in the
$\Phi'$-sense: \be E_{m-n} f=2^{-\a m}\,  (-1)^{\ell m}\, \Del ^\ell
I^\gam E_{\a+m-n}Q^\a f,\ee where $\Del$ is the Cayley-Laplace
operator (\ref{K-L}).
\end{corollary}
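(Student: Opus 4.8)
The plan is to obtain the formula almost directly from Lemma \ref{re7k}, which already expresses the normalized sine transform as a Riesz potential of $E_{m-n}f$: in the $\Phi'$-sense,
\[
E_{\a+m-n}\Qs^\a f = 2^{\a m}\, I^\a E_{m-n}f ,
\]
both sides understood by analytic continuation. First I would solve this identity for $E_{m-n}f$ by applying the inverse Riesz potential $I^{-\a}$ to both sides. This step is legitimate precisely because the equality lives in $\Phi'$ rather than $\S'$: by (\ref{ff}) the operator $I^\a$ acts on $\Phi'$-distributions as multiplication of the Fourier transform by $|y|_m^{-\a}$, and since multiplication by $|y|_m^{-\a}$ is an automorphism of $\Psi$ for every $\a\in\bbc$, the family $\{I^\a\}$ composes additively, $I^\a I^\b=I^{\a+\b}$, with $I^0$ the identity; hence $I^{-\a}$ is a genuine two-sided inverse of $I^\a$ on $\Phi'$. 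This gives $E_{m-n}f = 2^{-\a m}\, I^{-\a} E_{\a+m-n}\Qs^\a f$ in the $\Phi'$-sense.

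Next I would realize the negative-order potential $I^{-\a}$ through the Cayley--Laplace operator. Writing $\a=2\ell-\gam$ with $\ell$ a positive integer, the additivity gives $I^{-\a}=I^{-2\ell}I^{\gam}$, while (\ref{Dkh}) of Theorem \ref{m76} supplies $I^{-2\ell}=(-1)^{m\ell}\Del^\ell$. Substituting this factorization produces
\[
E_{m-n}f = 2^{-\a m}\,(-1)^{\ell m}\,\Del^\ell I^{\gam} E_{\a+m-n}\Qs^\a f ,
\]
which is the asserted inversion formula. I would first establish it under the auxiliary restriction (\ref{038}) imposed in Lemma \ref{re7k}, where $\Qs^\a f$ is a genuine integrable function and all integrals converge, and then lift the result to the full range $Re\,\a>m-1$, $\a\neq n-2m+1,n-2m+2,\dots$ by analytic continuation in $\a$, exactly as in the proof of that lemma; this range is precisely the set on which $\Qs^\a f$ is defined and the relevant normalizing gamma factors are regular.

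The main obstacle I anticipate is not the algebra but keeping the functional-analytic framework honest. Each operator in the chain---the multipliers $E_{\a+m-n}$ and $E_{m-n}$, the potential $I^{\gam}$, and the differential operator $\Del^\ell$---must be interpreted within $\Phi'$, where $I^{-\a}$ inverts $I^\a$; the same manipulation fails in $\S'$, since $|y|_m^{-\a}$ is not an $\S$-multiplier and the identity would then be determined only modulo distributions whose Fourier transform is supported on the cone $V$ of (\ref{sets}). A secondary bookkeeping point is the passage between the normalized transform $\Qs^\a$ used in Lemma \ref{re7k} and the plain $Q^\a$ of the statement: read with $\Qs^\a$, the clean constant $2^{-\a m}(-1)^{\ell m}$ carries over verbatim, whereas for the unnormalized $Q^\a$ one must reinstate the factor $d_{n,m}(\a)$ of (\ref{44z}) via $\Qs^\a f=d_{n,m}(\a)\,Q^\a f$, and I would check carefully which normalization makes the displayed constant correct.
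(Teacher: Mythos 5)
Your proposal is correct and is exactly the derivation the paper intends: the corollary is stated without proof immediately after Lemma \ref{re7k}, and the argument is precisely to apply $I^{-\a}=I^{-2\ell}I^{\gam}=(-1)^{m\ell}\Del^{\ell}I^{\gam}$ to (\ref{nn8}), using that $I^{\a}$ is an automorphism of $\Phi$ (hence of $\Phi'$) with the additive composition law, together with (\ref{Dkh}). Your closing remark about normalization is also on target: the constant $2^{-\a m}(-1)^{\ell m}$ is the correct one for the normalized transform $\Qs^{\a}$ appearing in (\ref{nn8}), so the printed statement with the plain $Q^{\a}$ should either read $\Qs^{\a}$ or carry the extra factor $d_{n,m}(\a)$ from (\ref{44z}).
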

\begin{remark} {\rm
An analogue of (\ref{nn8}) for $m=1$ is contained in Lemma 3.1 from
\cite{Ru4} and invokes spherical convolutions with hypergeometric
kernel. It is an interesting open problem to extend that Lemma  to
the higher-rank case.}
\end{remark}

\subsubsection {Non-local inversion of cosine and Funk transforms}

\begin {theorem}  \label{cr2t} Let $f$ be an integrable right
$O(m)$-invariant function on $\vnm$, $1\le m\le k\le n-m$, and let $\ell$ be a positive
integer.

\noindent {\rm (i)} If $g=\Cs^\a_{m,k} f$,
  $$Re \, \a> m-1, \qquad
\a+m-k \neq 1, 2, \ldots \,,$$
 and  $\a+n-k-m=2\ell -\gam$, then the following inversion formula holds in the
$\Phi'$-sense:
\be  \label {441} E_{m-n} f=c\, \Del^\ell
I^\gam E_{\a-k}\fd \Cs^\a_{m,k} f,\ee
\[ c=\frac{2^{m(k+m-n-\a)}(-1)^{m \ell}\, \Gam_m (k/2)}{\Gam_m ((n-m)/2)}.\]

\noindent {\rm (ii)} If $\vp=F_{m,k} f$ and  $n-k-m=2\ell -1$, then,  similarly,
\be  \label {441f} E_{m-n} f=c_*\, \Del^\ell
I^1 E_{-k}\fd F_{m,k} f,\ee
\[ c_*=\frac{2^{m(k+m-n)}(-1)^{m \ell}\, \Gam_m (k/2)\,  \Gam_m (m/2)}{\Gam_m ((n-m)/2)\, \Gam_m ((n-k)/2)}.\]
\end{theorem}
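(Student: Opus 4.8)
The plan is to reduce both inversion formulas to the already-established inversion of the sine transform through Riesz potentials, exploiting the fact that the composition $\fd\Cs^\a_{m,k}$ is, up to normalization, a sine transform $\Qs$. The non-locality in the odd case enters precisely through the fractional Riesz potential $I^1$ left over after one strips off an integer power of the Cayley--Laplace operator $\Del$.

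For part (i), I would first record the identity $\fd\Cs^\a_{m,k}f=\ncd0 F_{m,k}f$, immediate from the composition formula (\ref{gty}) and the definitions (\ref{n0mby}), (\ref{n0mbyd}); combined with (\ref{ores}) this gives $\fd\Cs^\a_{m,k}f=\varkappa_k\,\Qs^{\a+n-k-m}f$ under the stated hypotheses. Writing $\beta=\a+n-k-m$ and using $\beta+m-n=\a-k$, I multiply by $E_{\a-k}$ to obtain $E_{\a-k}\fd\Cs^\a_{m,k}f=\varkappa_k\,E_{\beta+m-n}\Qs^\beta f$, and Lemma \ref{re7k} (equation (\ref{nn8})) turns the right-hand side into $\varkappa_k\,2^{\beta m}\,I^\beta E_{m-n}f$ in the $\Phi'$-sense. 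Since $\beta=2\ell-\gamma$, I then apply $I^{-\beta}=I^\gamma I^{-2\ell}=(-1)^{m\ell}\Del^\ell I^\gamma$ (using (\ref{Dkh}) and the fact that $\Del$ and $I^\gamma$ are commuting Fourier multipliers on $\Phi$), which after rearrangement yields $E_{m-n}f=c\,\Del^\ell I^\gamma E_{\a-k}\fd\Cs^\a_{m,k}f$ with $c=(-1)^{m\ell}/(\varkappa_k\,2^{\beta m})$. Substituting $\varkappa_k=\Gam_m((n-m)/2)/\Gam_m(k/2)$ and $2^{-\beta m}=2^{m(k+m-n-\a)}$ reproduces the claimed constant.

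For part (ii), I would deduce the formula as the analytic continuation of part (i) to $\a=0$. Keeping $\ell$ fixed with $n-k-m=2\ell-1$, the relation $\a+n-k-m=2\ell-\gamma$ forces $\gamma=1-\a$, so $\gamma=1$ and $E_{\a-k}=E_{-k}$ at $\a=0$. Both sides of the part-(i) identity are analytic in $\a$ as $\Phi'$-functionals, so I may continue in $\a$; the only factor that must be reinterpreted is $\fd\Cs^\a_{m,k}f=\ncd0 F_{m,k}f$, which by Theorem \ref{cr2} (equation (\ref{for1y})) continues at $\a=0$ to $\mu_k\,\fd F_{m,k}f$ with $\mu_k=\Gam_m(m/2)/\Gam_m((n-k)/2)$. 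Thus $E_{m-n}f=(c|_{\a=0})\,\mu_k\,\Del^\ell I^1 E_{-k}\fd F_{m,k}f$, and a direct computation gives $(c|_{\a=0})\,\mu_k=c_*$.

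The main obstacle is the rigorous control of the $\Phi'$-machinery. Every operator manipulation above --- moving $I^{-\beta}$ across the identity, factoring it as $(-1)^{m\ell}\Del^\ell I^\gamma$, and commuting $\Del^\ell$ and $I^\gamma$ with the analytic continuation in $\a$ --- is legitimate only on the Semyanistyi--Lizorkin--Samko space $\Phi$, where multiplication by $|y|_m^{-\gamma}$ is an automorphism of $\Psi$ and (\ref{ff}) holds; outside this space the two sides may differ by a distribution with Fourier transform supported on the rank-deficient cone $V$. The second delicate point, specific to part (ii), is justifying the continuation of the part-(i) identity down to $\a=0$, where the defining integral for $\Cs^\a_{m,k}$ no longer converges: one must invoke the already-proved analytic-continuation identity (\ref{for1y}) in place of the integral representation and check that, with $\ell$ held fixed, $\gamma\to1$ and all constants remain finite.
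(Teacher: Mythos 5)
Your part (i) is, step for step, the paper's own argument: the paper likewise normalizes the composition formula (\ref{gty}) to obtain $\fd \Cs_{m,k}^\a f=\tilde c_\a\, \Qs^{\a+n-k-m} f$ (your route through (\ref{ores}) gives the identical statement, since $\tilde c_\a=\varkappa_k$) and then applies Corollary \ref{kkli} with $\a$ replaced by $\a+n-k-m$, which is precisely your step of writing $I^{-\b}=I^\gam I^{-2\ell}=(-1)^{m\ell}\Del^\ell I^\gam$ and peeling it off (\ref{nn8}); your constant $c=(-1)^{m\ell}/(\varkappa_k 2^{\b m})$ agrees with the paper's. For part (ii) you take a genuinely different route: you analytically continue the part (i) identity in $\a$ down to $\a=0$ with $\ell$ fixed and $\gam=1-\a$, whereas the paper starts from the $\a=0$ functional equation (\ref{xder}) (a consequence of (\ref{eq5y}) and (\ref{eq5yq})), rewrites its left-hand side via (\ref{gty}) in terms of $Q^{\a+n-k-m}f$, and then substitutes the \emph{fixed} test function $h=I^{1-2\ell}\phi$, so that the semigroup property of the Riesz potentials converts everything into $\underset{\a=0}{a.c.}\,(I^\a E_{m-n}f,\phi)=(E_{m-n}f,\phi)$. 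The practical difference is exactly the delicate point you flag yourself: in your version the operator $I^{1-\a}$, and hence the effective test function $(-1)^{m\ell}\Del^\ell I^{1-\a}\phi$, varies with $\a$, so identifying the value at $\a=0$ through (\ref{for1y}) requires either a two-variable (Hartogs-type) analyticity argument in $\a$ and the auxiliary exponent, or the paper's device of freezing the test function and pushing all of the $\a$-dependence into $I^\a E_{m-n}f$. With that adjustment your argument closes, and your constants match: $c_*=(c|_{\a=0})\,\mu_k$ is exactly the paper's value.
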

\begin {proof}
(i) By (\ref {gty}), after normalization, we obtain
 \be \label{lpby}\fd \Cs_{m,k}^\a f=\tilde c_\a\, \Qs^{\a+n-k-m} f,\qquad \tilde c_\a=\frac{c_\a \, \del_{n,m,k}(\a)}{d_{n,m} (\a+n-k-m)}.\nonumber\ee
Then we apply Corollary \ref{kkli} with $\a$ replaced by $\a+n-k-m$. This gives the result.

(ii) By (\ref{eq5y}) and (\ref{eq5yq}), for any $h \in \S(\Ma)$ we have
\be\label {xder}\underset
{\a=0}{a.c.} \,
 \left(\frac{E_{\a-k} \cd0 \vp}{\Gam_m (\a/2)}, h \right)=\lam_1 \, (E_{-k}\fd \vp, h ),\ee\[ \lam_1 = \frac{\Gam_m (n/2)}{\Gam_m (k/2)\,\Gam_m ((n-k)/2)}.\]
Owing to (\ref {gty}), the left-hand side of (\ref{xder}) can be written as
\[\underset
{\a=0}{a.c.} \, \frac{c_\a}{\Gam_m (\a/2)}\, (E_{\a-k} Q^{\a+n-k-m} f, h ).\]
Now we replace  $h$ by the Riesz potential $I^{1-2\ell}\phi=(-1)^{m \ell}\Del^\ell I^1\phi$, where $\phi\in \Phi$, $2\ell-1=n-k-m$, and apply (\ref{nn8}). This gives
\[\lam_1  \, (-1)^{m \ell}\,(\Del^\ell I^1 E_{-k}\fd \vp, \phi)\!=\!\lam_2\, \underset
{\a=0}{a.c.} \,(I^\a E_{m-n} f, \phi)\!=\!\lam_2\, (E_{m-n}f, \phi),\]\[ \lam_2 = \frac{2^{m(n-m-k)}\Gam_m (n/2)\, \Gam_m ((n-m)/2)}{\Gam_m^2 (k/2)\,\Gam_m (m/2)},\]
and the result follows.
 \end{proof}

\section{Appendix}

\subsection{On  the paper by Gelfand,   Graev, and Rosu}\label {bhub}
Sections 2 and 3 of \cite{GGR} deal with analytic continuation of a certain auxiliary operator $R_p^\lam$; see, e.g.,  formula (3.5) in \cite{GGR}. Ignoring the normalizing factor, and changing notation $p=m, \;\lam=\a+n-m$, one can write it as a cosine transform
\be \label
{nkmap}(M^{\a} f)(u)=\int_{\vnm} \!\!\!f(v)\,
|u'v|^{\a-m} \, d_*v,\qquad u\in\vnm,\ee
where $2m\le n$, $Re \,\a >m-1$, and $f$ is a right $O(m)$-invariant function. It was stated on p. 368, that elementary computation (which is skipped) leads to the formula of $R_p^\lam$ in coordinates; see  formula (3.6) in the same paper. Below we perform computation  and arrive at a different expression,  which does not fall (at least,  directly) into the scope of references \cite{P1, Rai}, as stated in \cite[Proposition 1, p. 368]{GGR}.\footnote{The reasoning from \cite{P1} is reproduced in Lemma \ref{l3.1} for  the more general situation.}

Let ${\tilde\frM}_{n-m,m}$ be the subset of $\frM_{n-m,m}$, which  consists of matrices of rank $m$. Every matrix $y\in{\tilde\frM}_{n-m,m}$ is uniquely represented in polar coordinates as $y=\om s^{1/2}$, $\om \in V_{n-m,m}$, $s\in \Omega$. We set
\be \label {8gbu} v=\left[\begin{array} {c} \om((I_m+s)^{-1}s)^{1/2} \\ (I_m+s)^{-1/2} \end{array}
 \right]=\left[\begin{array} {c} v_1 \\ v_2 \end{array}
 \right].\ee
 Since \bea v'v&=& ((I_m+s)^{-1}s)^{1/2}\om'\om ((I_m+s)^{-1}s)^{1/2}+(I_m+s)^{-1}\nonumber\\&=&(I_m+s)^{-1}s+(I_m+s)^{-1}=I_m, \nonumber\eea then
 $v\in \vnm$. Thus (\ref{8gbu}) defines a map $\mu:{\tilde\frM}_{n-m,m} \to \vnm$.  Conversely, given $v\in \vnm$, the corresponding matrix $y=\om s^{1/2}\in {\tilde\frM}_{n-m,m}$ can be reconstructed from obvious relations
 \be\label{mklr}
(I_m+s)^{-1}=v'v_0v'_0v, \qquad v_0=\left[\begin{array} {c}0 \\ I_m \end{array}
 \right]\in \vnm,\ee
\[ \om((I_m+s)^{-1}s)^{1/2}=\check v'_0v, \qquad \check v_0=\left[\begin{array} {c} I_{n-m} \\ 0 \end{array}
 \right]\in V_{n,n-m}.\]

\begin{lemma} \label{mkze} Let $2m\le n$. If $f$ is an integrable  right $O(m)$-invariant function on $\vnm$,  $\tilde f(y)=(f\circ\mu)(y)$, then
\be\label {9nyf}\int_{\vnm} f(v)\, dv= \sig_{m,m}\,\int_{{\frM}_{n-m,m}}\frac{\tilde f(y)}{|I_m +y'y|^{n/2}}\, dy.\ee
If $F$ is  an integrable  function on ${\frM}_{n-m,m}$, $\check F(v)=(F\circ\mu^{-1})(v)$, then
\be\label {9nyfb}\int_{{\frM}_{n-m,m}}F(y)\, dy=\frac{1}{\sig_{m,m}}\,\int_{\vnm} \frac{\check F(v)}{|v'_0v|^n}\, dv. \ee
\end{lemma}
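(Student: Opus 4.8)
The plan is to prove (\ref{9nyf}) by an explicit change of variables, and then to read off (\ref{9nyfb}) from the same computation. It is convenient first to record that, with $s=y'y\in\Om$ and $v_0=\left[\begin{array}{c} 0\\ I_m\end{array}\right]$, the map $\mu$ of (\ref{8gbu}) can be written compactly as $\mu(y)=\left[\begin{array}{c} y\\ I_m\end{array}\right](I_m+y'y)^{-1/2}$; in particular the columns of $\mu(y)$ span the graph of $y$, its bottom block is $v_0'\mu(y)=(I_m+y'y)^{-1/2}$, and hence $|v_0'\mu(y)|^n=|I_m+y'y|^{-n/2}$. The hypothesis $2m\le n$ guarantees that a generic $y\in\frM_{n-m,m}$ has rank $m$, so the polar decomposition $y=\om s^{1/2}$, $\om\in V_{n-m,m}$, $s\in\Om$, is available.

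First I would apply the bi-Stiefel decomposition (Lemma~\ref{l2.1}(i)) with $k=n-m$, so that $V_{n-k,m}=V_{m,m}=O(m)$ and $\del=m/2-(m+1)/2=-1/2$. For right $O(m)$-invariant $f$ this gives
\[ \int_{\vnm} f(v)\,dv=\int_{a'a<I_m}|I_m-a'a|^{-1/2}\,da\int_{O(m)} f(V)\,du,\qquad V=\left[\begin{array}{c} a\\ ub\end{array}\right],\ \ b=(I_m-a'a)^{1/2}. \]
The frame $V$ and $\mu(ab^{-1}u')$ span the same subspace (the graph of $a(ub)^{-1}=ab^{-1}u'$), so they differ by a right factor in $O(m)$; by invariance $f(V)=f(\mu(ab^{-1}u'))=\tilde f(ab^{-1}u')$, which depends on $(a,u)$ only through $y=ab^{-1}u'$. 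Inverting this relation gives $b=u'(I_m+y'y)^{-1/2}u$ and $a=y(I_m+y'y)^{-1/2}u$, whence $|I_m-a'a|^{-1/2}=|b|^{-1}=|I_m+y'y|^{1/2}$.

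The next step is to pass to the variables $(y,u)$. For fixed $u$ the map $y\mapsto a=\Psi(y)\,u$ with $\Psi(y)=y(I_m+y'y)^{-1/2}$ has Jacobian $|J_\Psi(y)|$, since right multiplication by $u\in O(m)$ contributes $|u|^{n-m}=1$; the $u$-integration then factors out as $\int_{O(m)}du=\sig_{m,m}$ (cf. (\ref{2.16})). The crux is the Jacobian $|J_\Psi|$, which I would compute in polar coordinates (Lemma~\ref{l2.3} with $n$ replaced by $n-m$). The map $\Psi$ preserves the frame part $\om$ and acts on $s=y'y$ by $s\mapsto t=s(I_m+s)^{-1}$, so $|J_\Psi|$ is the product of the radial density ratio $|t|^{(n-2m-1)/2}/|s|^{(n-2m-1)/2}=|I_m+s|^{-(n-2m-1)/2}$ (using $|t|=|s|\,|I_m+s|^{-1}$) and the Jacobian of $s\mapsto t$ on $\S_m$. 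Since $dt=(I_m+s)^{-1}\,ds\,(I_m+s)^{-1}$ and the map $X\mapsto gXg'$ on $\S_m$ has Jacobian $|g|^{m+1}$ (the identity underlying (\ref{2.1})), applied with $g=(I_m+s)^{-1}$ this last factor equals $|I_m+s|^{-(m+1)}$. Multiplying, $|J_\Psi(y)|=|I_m+s|^{-(n+1)/2}=|I_m+y'y|^{-(n+1)/2}$, so that $|I_m-a'a|^{-1/2}|J_\Psi(y)|=|I_m+y'y|^{-n/2}$, which is exactly the weight in (\ref{9nyf}); collecting the factor $\sig_{m,m}$ completes that formula.

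The companion formula (\ref{9nyfb}) comes out of the \emph{same} change of variables, now with no invariance hypothesis on $F$. Starting from the bi-Stiefel form of the left-hand side of (\ref{9nyfb}), using $v_0'V=ub$ (so $|v_0'V|^n=|I_m-a'a|^{n/2}$, independent of $u$) and $\check F(V)=F(\mu^{-1}(V))=F(ab^{-1}u')$ (as $\mu^{-1}(V)$ is just the graph matrix of the column span of $V$), one obtains
\[ \int_{\vnm}\frac{\check F(v)}{|v_0'v|^n}\,dv=\int_{a'a<I_m}|I_m-a'a|^{-(n+1)/2}\,da\int_{O(m)}F(ab^{-1}u')\,du. \]
The substitution $(a,u)\mapsto(y,u)$ turns $|I_m-a'a|^{-(n+1)/2}\,da$ into $|I_m+y'y|^{(n+1)/2}|J_\Psi(y)|\,dy=dy$, and the $u$-integration again yields $\sig_{m,m}$, giving (\ref{9nyfb}). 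The one genuine difficulty is the Jacobian evaluation of the third paragraph; the remainder is bookkeeping with the frame/cone factorization and with the collapse of the $O(m)$-fibre of the bi-Stiefel formula, under $y=ab^{-1}u'$, onto the single section cut out by $\mu$ — this collapse being precisely the source of the constant $\sig_{m,m}$.
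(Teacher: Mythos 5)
Your proof is correct and follows essentially the same route as the paper's: both rest on the bi-Stiefel decomposition of Lemma~\ref{l2.1} with $k=n-m$ together with the change of variables $s\mapsto s(I_m+s)^{-1}$ between the cone $\Omega$ and $\{0<r<I_m\}$ — the paper uses part (ii) and recombines $(s,\om)\mapsto \om s^{1/2}$ via the polar decomposition, while you use part (i) and package the same cone substitution inside the Jacobian of $y\mapsto y(I_m+y'y)^{-1/2}$. One small inaccuracy: by (\ref{mklr}) one has $\mu^{-1}(v)=\check v_0'v\,(v'v_0v_0'v)^{-1/2}$, so that $\mu^{-1}(v\gamma)=\mu^{-1}(v)\gamma$ and $\mu^{-1}$ is \emph{not} the right-$O(m)$-invariant ``graph matrix of the span''; accordingly $\mu^{-1}(V)=ab^{-1}$, not $ab^{-1}u'$. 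This slip is harmless for (\ref{9nyfb}), since the two candidates differ by a right rotation of the argument and $dy$ is invariant under $y\mapsto y\gamma$, but the pointwise identification of $\check F(V)$ in your last display should be corrected.
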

\begin{proof} We denote by $I$ the left-hand side of (\ref{9nyf}). By (\ref{2.11}) with $k=n-m$,
\[ I= \intl_0^{I_m} d\nu(r) \intl_{ V_{n-m, m}}dw
 \intl_{ V_{m, m}} f\left(\left[\begin{array} {cc} wr^{1/2} \\ u(I_m -r)^{1/2} \end{array}
 \right]\right) \ du,
\]
 \[d\nu(r)=2^{-m} |r|^{(n-2m-1)/2}|I_m -r|^{-1/2}\, dr.\]
Changing variable $r=I_m- (I_m+s)^{-1}$, we get
\[I=2^{-m}\int_{\p}d\tilde \nu(s)\intl_{ V_{n-m, m}}dw
 \intl_{ V_{m, m}} f\left(\left[\begin{array} {c} \om((I_m+s)^{-1}s)^{1/2} \\ u(I_m+s)^{-1/2} \end{array}
 \right]\right) \ du,
\]
 \[d\tilde \nu(s)=2^{-m} |s|^{(n-2m-1)/2}|I_m+s|^{-n/2}\,ds.\]
 Since $f$ is right $O(m)$-invariant,  then integration over $V_{m, m}=O(m)$ can be suppressed and Lemma \ref {l2.3} yields (\ref{9nyf}). The second equality is a consequence of the first one, owing to (\ref{mklr}).
\end{proof}

Now we return back to (\ref{nkmap}). In the statement below $v_0, f,  \tilde f, F$, and $\check F$ have the same meaning as in Lemma \ref{mkze};
 $u,v\in\vnm$; $x,y\in {\frM}_{n-m,m}$; $u=\mu (x)$. We set
\[ \tilde K_\a (x,y)=\frac{|I_m+x'x|^{(m-\a)/2}}{|I_m+y'y|^{(n+\a-m)/2}}, \qquad \check K_\a (u,v)=\frac{|v'_0u|^{m-\a}}{|v'_0v|^{n+\a-m}}.\]
\begin{lemma} \label {oontr} The following relations hold provided that integrals in either side are absolutely convergent:
\be\label {8mi8}\intl_{\vnm} \!\!\!f(v)\,
|u'v|^{\a-m} \, d_*v\!=\!\frac{\sig_{m,m}}{\sig_{n,m}}\intl_{{\frM}_{n-m,m}} \!\!\!\!\!\tilde f(y)\,|I_m\!+\!x'y|^{\a-m} \tilde K_\a (x,y)\, dy,\ee
\be\label {8mi82}\intl_{{\frM}_{n-m,m}}\!\!\! \!\!F(y)\,|I_m\!+\!x'y|^{\a-m}\, dy\!=\!\frac{\sig_{n,m}}{\sig_{m,m}}\intl_{\vnm} \!\!\!f(v)\,
|u'v|^{\a-m} \check K_\a (u,v)\, d_*v.\ee
\end{lemma}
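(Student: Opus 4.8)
The plan is to reduce both identities to a single algebraic fact, namely a closed formula for the determinant $|u'v|$ in terms of $|I_m+x'y|$; once this is in hand, (\ref{8mi8}) and (\ref{8mi82}) will follow by substituting it into the two directions (\ref{9nyf}) and (\ref{9nyfb}) of Lemma \ref{mkze}, with only routine exponent bookkeeping. Throughout I write $u=\mu(x)$, $v=\mu(y)$ via (\ref{8gbu}), use the polar decompositions $x=\om_x(x'x)^{1/2}$, $y=\om_y(y'y)^{1/2}$, and abbreviate $t=x'x$, $s=y'y$, so that $\om_x'\om_y=t^{-1/2}\,x'y\,s^{-1/2}$.

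First I would compute $u'v$ block-wise. Writing $u=[u_1;u_2]$ and $v=[v_1;v_2]$ as in (\ref{8gbu}), one has $u'v=u_1'v_1+u_2'v_2$. The crucial point is that every factor occurring is a function of the single symmetric matrix $s$ or $t$, hence these factors commute; in particular $((I_m+t)^{-1}t)^{1/2}=(I_m+t)^{-1/2}t^{1/2}$ and likewise for $s$. Feeding in $\om_x'\om_y=t^{-1/2}x'y\,s^{-1/2}$, the square-root factors telescope and yield
$$u'v=(I_m+x'x)^{-1/2}\,(I_m+x'y)\,(I_m+y'y)^{-1/2},$$
whence the determinant identity
$$|u'v|=\frac{|I_m+x'y|}{|I_m+x'x|^{1/2}\,|I_m+y'y|^{1/2}}.$$
I expect this telescoping — verifying the exact cancellation of the square-root factors using the commutativity of functions of $s$ and of $t$ — to be the main obstacle; the rest is organization.

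Given the identity, (\ref{8mi8}) will follow by applying (\ref{9nyf}) to the integrand $f(v)\,|u'v|^{\a-m}$, which is right $O(m)$-invariant in $v$ since $|u'v\gamma|=|u'v|$ for $\gamma\in O(m)$. Substituting $|u'v|^{\a-m}=|I_m+x'x|^{(m-\a)/2}\,|I_m+x'y|^{\a-m}\,|I_m+y'y|^{(m-\a)/2}$ and dividing by the weight $|I_m+y'y|^{n/2}$ from (\ref{9nyf}), the powers of $|I_m+y'y|$ combine to $(n+\a-m)/2$, so that the kernel is exactly $\tilde K_\a(x,y)$; dividing by $\sig_{n,m}$ to pass from $dv$ to $d_*v$ produces (\ref{8mi8}). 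For (\ref{8mi82}) I would run the same substitution through (\ref{9nyfb}), using $|v_0'u|=|I_m+x'x|^{-1/2}$ and $|v_0'v|=|I_m+y'y|^{-1/2}$ (both immediate from (\ref{8gbu})) to rewrite the weight $|v_0'v|^{-n}$ and to recognize $\check K_\a(u,v)=|I_m+x'x|^{(\a-m)/2}\,|I_m+y'y|^{(n+\a-m)/2}$; this is just the inverse reading of (\ref{8mi8}) under $\check F=f$. In either direction, absolute convergence of the integrals is precisely the hypothesis that justifies the change of variables supplied by Lemma \ref{mkze}.
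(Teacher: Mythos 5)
Your proposal is correct and follows essentially the same route as the paper: both compute $u'v=u_1'v_1+u_2'v_2$ blockwise, use the commutativity of functions of the symmetric matrices $x'x$ and $y'y$ to obtain $|u'v|=|I_m+x'y|\,/\,\bigl(|I_m+x'x|^{1/2}|I_m+y'y|^{1/2}\bigr)$ together with its inverse reading via (\ref{mklr}), and then substitute into the two changes of variables of Lemma \ref{mkze}. The exponent bookkeeping you describe matches the kernels $\tilde K_\a$ and $\check K_\a$ exactly, so nothing is missing.
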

\begin{proof} As above, we set $x=\theta r^{1/2}$, $y=\om s^{1/2}$, where $\theta,\om \in V_{n-m,m}$ and  $r,s\in \Omega$. Then we define $v\in \vnm$ by (\ref{8gbu}) and, similarly,
\be \label {8gbuu} u=\left[\begin{array} {c} \theta((I_m+r)^{-1}r)^{1/2} \\ (I_m+r)^{-1/2} \end{array}
 \right]=\left[\begin{array} {c} u_1 \\ u_2 \end{array}
 \right].\ee
Keeping in mind that
\[r(I_m +r)^{-1}=(I_m +r)^{-1}r, \qquad ((I_m +r)^{-1}r)^{1/2}=(I_m+r)^{-1/2}r^{1/2},\]
we easily have
\[|u'v|=|u_1'v_1 +u'_2v_2|=|I_m+r|^{-1/2} |I_m+r^{1/2}\theta'\om s^{1/2}| |I_m+s|^{-1/2}.\]
Hence, by (\ref{mklr}),
\[ |u'v|=\frac{ |I_m+x'y| }{|I_m+x'x|^{1/2}\, |I_m+y'y|^{1/2}},\qquad |I_m+x'y| =\frac{|u'v|}{|v'_0u|\,|v'_0v|}.\]
It remains to plug these expressions in the corresponding integrals and apply Lemma \ref{mkze}.
\end{proof}
\begin{remark} {\rm An important factor $\tilde K_\a (x,y)$ in (\ref{8mi8}), that depends on $\a$ and  affects the  behavior   at infinity, is skipped in \cite[formula (3.6)]{GGR}. Moreover, Lemma \ref{mkze} reveals that, in general,   $\tilde f(y)$ is not rapidly decreasing. It follows that the reasoning from \cite {P1, Rai}, related to
 analytic continuation of the distribution $|x|^\lam/\Gam_p ((\lam+p)/2)$ cannot be directly applied to  (\ref{8mi8}). To understand the essence of the matter, the reader is encouraged to   perform  analytic continuation in detail for $m=1$. Note also that, the integral operator on the right-hand side of (\ref{8mi82}) is not $O(n)$-invariant,  unlike the left-hand side of (\ref{8mi8}).

 In the preceding work \cite{GGS}, the result was deduced from \cite{GGS67}, using transition to the Radon transform on the space of matrices. This transition requires careful inspection because it does not lead to rapidly decreasing functions, which are assumed in \cite{GGS67}.}
\end{remark}

\subsection{A useful integral}

\begin{lemma}\label {exl} Let $\lv=(\lam_1, \dots ,\lam_m) \in
\bbc^m$, $\; u\in \vnk, \quad v \in \vnm$,  $1\le m\le k\le n$. Then
\be \label {ave} \intl_{\vnm} (v'uu'v)^{\lv}\, d_*v= \intl_{\vnk} (v'uu'v)^{\lv}\, d_*u
= \frac{\Gam_m (m/2)\,\Gam_\Om(\lv+\kv)}{\Gam_m (k/2)\, \Gam_\Om(\lv+\nv)}. \ee This integral converges
absolutely if and only if $Re \, \lam_j > j-k-1$ for each $j=1,2,
\ldots ,m$.
\end{lemma}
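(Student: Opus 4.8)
The plan is to establish both the equality of the two integrals and the explicit value by comparing a single Gaussian integral over the ambient matrix space $\Ma$ evaluated in two coordinate systems.

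First I would dispose of the identity between the integrals over $\vnm$ and $\vnk$: it is immediate from Lemma \ref{hbi1} applied to the function $f(b)=(bb')^\lv$ of the $m\times k$ matrix $b=v'u$, which gives $\intl_{\vnk}(v'uu'v)^\lv\, d_*u=\intl_{\vnm}(v'uu'v)^\lv\, d_*v$. It then suffices to evaluate $J(\lv)=\intl_{\vnm}(v'uu'v)^\lv\, d_*v$ for one fixed $u\in\vnk$; by $O(n)$-invariance of $d_*v$ this is independent of $u$. The main device is the auxiliary integral $I=\intl_{\Ma}(x'uu'x)^\lv\, e^{-\tr(x'x)}\, dx$, which I would compute twice.

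For the first evaluation I use the triangular polar decomposition of Lemma \ref{sph}, writing $x=vt$ with $v\in\vnm$, $t\in T_m$. Then $x'x=t't$ and $x'uu'x=t'(v'uu'v)t$, so property (\ref{pr6}) factors the composite power function as $(x'uu'x)^\lv=(t't)^\lv (v'uu'v)^\lv$, and the $v$- and $t$-integrals separate. Using $(t't)^\lv=\prod_j t_{j,j}^{\lam_j}$, the $t$-integral is a product of one-dimensional Gaussians (the off-diagonal entries give $\pi^{m(m-1)/4}$, the diagonal ones give $2^{-m}\prod_j\Gam((\lam_j+n-j+1)/2)$), so by (\ref{gf}) it equals $2^{-m}\Gam_\Om(\lv+\nv)$. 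Hence $I=\sig_{n,m}\,J(\lv)\,2^{-m}\Gam_\Om(\lv+\nv)$. For the second evaluation I fix $u=u_0$ of (\ref{klop8}) and split $x$ into its top $n-k$ rows $x_1\in\frM_{n-k,m}$ and bottom $k$ rows $x_2\in\frM_{k,m}$; then $x'u_0u_0'x=x_2'x_2$ and $\tr(x'x)=\tr(x_1'x_1)+\tr(x_2'x_2)$, so $I=\pi^{(n-k)m/2}\intl_{\frM_{k,m}}(x_2'x_2)^\lv e^{-\tr(x_2'x_2)}\, dx_2$. The remaining integral is handled by the ordinary polar decomposition (Lemma \ref{l2.3}): passing to $x_2=wr^{1/2}$ absorbs the Jacobian $|r|^{(k-m-1)/2}$ into the power function via (\ref{pr1}) and (\ref{det}), and after converting $dr$ to $d_*r$ it equals $\sig_{k,m}2^{-m}\Gam_\Om(\lv+\kv)$ by (\ref{gf}). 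Thus $I=\pi^{(n-k)m/2}\sig_{k,m}2^{-m}\Gam_\Om(\lv+\kv)$.

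Equating the two expressions for $I$ and solving yields
\[ J(\lv)=\frac{\pi^{(n-k)m/2}\,\sig_{k,m}}{\sig_{n,m}}\;\frac{\Gam_\Om(\lv+\kv)}{\Gam_\Om(\lv+\nv)}, \]
and inserting the Stiefel volumes from (\ref{2.16}) collapses the prefactor to a ratio of Siegel gamma functions while the $\pi$-powers cancel, giving the constant on the right of (\ref{ave}). For the convergence criterion, the second evaluation localizes the only possible divergence to the integral over $\frM_{k,m}$, that is, to convergence of $\Gam_\Om(\lv+\kv)$, which by (\ref{gf}) holds exactly when $Re\,\lam_j>j-k-1$ for every $j$; since $k\le n$, this strip is contained in $\{Re\,\lam_j>j-n-1\}$, where $\Gam_\Om(\lv+\nv)$ already converges, so the first factorization shows $J(\lv)$ shares the same strip of absolute convergence (the necessity can also be read off directly from the bi-Stiefel decomposition (\ref{2.11}) when $k\le n-m$, whose Beta-type integrand degenerates precisely at $Re\,\lam_j=j-k-1$). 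The step demanding the most care is the factorization of the power function under conjugation: the clean identity $(t'st)^\lv=(t't)^\lv s^\lv$ is valid only for $t\in T_m$, which forces the triangular decomposition of Lemma \ref{sph} rather than the symmetric square root in the first evaluation; keeping the $\pi$-powers and gamma factors consistent between the two routes is the remaining, purely computational, hurdle.
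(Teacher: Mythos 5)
Your proof is correct and follows essentially the same route as the paper's: the first equality via Lemma \ref{hbi1}, then the auxiliary Gaussian integral $\int_{\Ma}(x'uu'x)^{\lv}e^{-\tr(x'x)}\,dx$ evaluated once by the block splitting adapted to $u_0$ (plus polar coordinates on $\frM_{k,m}$, giving $2^{-m}\sig_{k,m}\Gam_\Om(\lv+\kv)$ times a Gaussian factor) and once by the triangular decomposition $x=vt$ of Lemma \ref{sph} together with (\ref{pr6}), with the sharpness of $Re\,\lam_j>j-k-1$ read off from the $\Gam_\Om(\lv+\kv)$ factor exactly as in the paper. One small point: your prefactor $\pi^{(n-k)m/2}\sig_{k,m}/\sig_{n,m}$ actually collapses via (\ref{2.16}) to $\Gam_m(n/2)/\Gam_m(k/2)$, not to the $\Gam_m(m/2)/\Gam_m(k/2)$ printed in (\ref{ave}); this is a typo in the statement rather than an error in your argument (the paper's own proof ends with the same mismatch, and the correct $\Gam_m(n/2)$ is what appears in Corollary \ref{le} and is needed for consistency at $\lv=0$).
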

\begin{proof} The first equality follows from Lemma \ref{hbi1}.
Both integrals  are, in fact, constants.  Thus, it suffices to evaluate
\be \label {ax} I=\intl_{\vnm} (v'u_0 u_0'v)^{\lv} \,
d_*v, \qquad u_0=
\left[\begin{array} {c}  I_{k} \\
0 \end{array} \right] \in \vnk. \ee Consider
 an auxiliary integral
\be \label {aax} A=\intl_{\Ma} (x'u_0 u_0'x)^{\lv} \, e^{-{\rm tr}
(x'x)} \, dx.\ee
 We compute it in two different ways. Let first $$x=\left[\begin{array} {c}
  a \\ b \end{array} \right], \qquad a \in \frM_{k,m}, \quad b\in
  \frM_{n-k,m}.$$ Then $u_0'x=a, \, x'x= a'a +b'b$, and we have
\be \label {q1} A\!=\!A_1 A_2, \quad A_1\!=\!\!\intl_{\frM_{k,m}}\!\! (a'a)^{\lv}
\, e^{-{\rm tr} (a'a)} \, da, \quad A_2\!=\!\! \intl_{\frM_{n-k,m}}\!\!
e^{-{\rm tr} (b'b)} \, db.\ee
Passing to polar coordinates, owing to (\ref{gf}) and (\ref{det}), we obtain \be \label {q2} A_1=2^{-m}
\sig_{k,m} \intl_{\Omega} r^{\lv+\kv} e^{-{\rm tr} (r)} d_{*} r=
2^{-m}
\sig_{k,m}\, \Gam_\Om(\lv +\kv), \ee provided
$Re \, \lam_j
> j-k-1$,  $\, \forall j=1,2, \ldots ,m$. The last condition is sharp. It gives
the ``only if" part of the lemma. For
$A_2$ we have
\[ A_2=\left
( \, \intl_{-\infty}^{\infty}e^{-s^2}\ ds \right )^{m(n-k)}=
\pi^{m(n-k)/2}.\] Thus \be \label {q22} A=\frac{ \pi^{nm/2}\,
\Gam_\Om(\lv +\kv)}{\Gam_m (k/2)}, \qquad Re \, \lam_j
> j-k-1.\ee

On the other hand, by setting $x=vt, \, v \in \vnm,   \, t \in T_m$,
owing to Lemma \ref{sph}, we obtain
\[A=\intl_{T_m} e^{-{\rm tr} (t't)} \, d \mu (t)\intl_{\vnm}
(t'v'u_0 u_0'vt)^{\lv} \, dv,\]
\[ d \mu (t)=\prod\limits_{j=1}^m t_{j,j}^{n-j} \, dt_{j,j} \, dt_*, \qquad
dt_*=\prod\limits_{i<j} dt_{i,j}.\] By (\ref{pr6}), one can write
\be \label {q3}A=BI,\ee where $I$ is our integral (\ref{ax})
 and
\bea B&=&\sig_{n,m}\intl_{T_m} (t't)^\lv e^{-{\rm tr} (t't)}
\,d\mu(t)=\nonumber
\\ &=&\sig_{n,m}\,\prod\limits_{j=1}^m  \intl_0^\infty t_{j,j}^{\lam_j+n-j}\,
e^{-t_{j,j}^2}\, dt_{j,j} \times \prod\limits_{i<j}\,
\intl_{-\infty}^\infty e^{-t_{i,j}^2}\, dt_{i,j} \nonumber \\
&=&2^{-m} \,\pi ^{m(m-1)/4} \,\sig_{n,m} \prod\limits_{j=1}^m \Gam \Big
(\frac{\lam_j+n-j+1}{2}\Big )\nonumber \\ &=&2^{-m}
\,\sig_{n,m}\,\Gam_\Om(\lv+\nv), \qquad Re \, \lam_j > j-n-1.\nonumber \eea
Combining this with (\ref{q22}) and (\ref {q3}), we obtain
\[ I=\frac{A_1 A_2}{B}= \frac{\Gam_m (m/2)\,\Gam_\Om(\lv+\kv)}{\Gam_m (k/2)\, \Gam_\Om(\lv+\nv)}.\]
\end{proof}

In the case $\lam_1=\ldots=\lam_m=\lam$ we have the following.
\begin{corollary}\label{le} Let $1\le m\le k\le n$,
$\;Re\, \lam > m-k-1$. Then
\be\label{mnv}
 \intl_{V_{n, m}}|v'uu'v|^{\lam/2} \,
dv\!=\!\intl_{V_{n, k}}|v'uu'v|^{\lam/2} \,
du\!=\!\frac{\Gam_m (n/2)\, \Gam_m ((\lam\!+\!k)/{2})}{\Gam_m (k/2)\,\Gam_m ((\lam\!+\!n)/{2})}. \ee  \end{corollary}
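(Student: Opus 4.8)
The plan is to read off Corollary \ref{le} as the diagonal specialization $\lam_1=\dots=\lam_m=\lam$ of Lemma \ref{exl}, which is the statement immediately preceding it. Put $\lv=\lv_0=(\lam,\dots,\lam)$. First I would use (\ref{det}) to collapse the composite power function to an ordinary determinantal power: since $r^{\lv_0}=|r|^{\lam/2}$, the integrand $(v'uu'v)^{\lv_0}$ equals $|v'uu'v|^{\lam/2}$ for almost every $v$ (the exceptional set where $v'uu'v$ is singular has measure zero once $k\ge m$), which is exactly the integrand in (\ref{mnv}). Thus the two integrals in (\ref{mnv}) are the two integrals of Lemma \ref{exl} evaluated at $\lv=\lv_0$, and in particular their equality (the integral over $\vnm$ versus the one over $\vnk$) is inherited from Lemma \ref{exl}, hence ultimately from Lemma \ref{hbi1}.

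Next I would evaluate the two Siegel gamma factors in the constant of Lemma \ref{exl}. Because $\kv=(k,\dots,k)$ and $\nv=(n,\dots,n)$, the shifted vectors $\lv_0+\kv=(\lam+k,\dots,\lam+k)$ and $\lv_0+\nv=(\lam+n,\dots,\lam+n)$ are again diagonal, so a second application of (\ref{det}), with $\lam$ replaced by $\lam+k$ and by $\lam+n$ respectively, gives $\Gam_\Om(\lv_0+\kv)=\Gam_m((\lam+k)/2)$ and $\Gam_\Om(\lv_0+\nv)=\Gam_m((\lam+n)/2)$. Substituting these into the formula of Lemma \ref{exl} produces precisely the right-hand side $\frac{\Gam_m(n/2)\,\Gam_m((\lam+k)/2)}{\Gam_m(k/2)\,\Gam_m((\lam+n)/2)}$ of (\ref{mnv}).

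Finally I would specialize the convergence criterion. Lemma \ref{exl} converges absolutely if and only if $Re\,\lam_j>j-k-1$ for each $j=1,\dots,m$; on the diagonal $\lam_j=\lam$ these $m$ inequalities reduce to the single sharpest one, attained at $j=m$, namely $Re\,\lam>m-k-1$, which is exactly the hypothesis of the corollary.

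There is essentially no obstacle here, as the corollary is a pure specialization. The only point requiring a little care is the bookkeeping of the identity (\ref{det}) for the three diagonal arguments $\lv_0$, $\lv_0+\kv$, $\lv_0+\nv$, together with keeping the normalized measures $d_*v,\,d_*u$ straight so that the single common constant in (\ref{mnv}) is reproduced correctly.
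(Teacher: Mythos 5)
Your route is exactly the paper's: Corollary \ref{le} is presented there with no separate proof, simply as the diagonal case $\lam_1=\dots=\lam_m=\lam$ of Lemma \ref{exl}, and your use of (\ref{det}) to collapse $(\cdot)^{\lv_0}$, $\Gam_\Om(\lv_0+\kv)$, $\Gam_\Om(\lv_0+\nv)$ to $|\cdot|^{\lam/2}$, $\Gam_m((\lam+k)/2)$, $\Gam_m((\lam+n)/2)$, together with the observation that the $m$ convergence conditions reduce to the single one at $j=m$, is precisely the intended argument.

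One step, however, does not go through as literally written. You assert that substituting into the constant of Lemma \ref{exl} "produces precisely" the right-hand side of (\ref{mnv}); but the constant displayed in Lemma \ref{exl} has $\Gam_m(m/2)$ in the numerator, whereas (\ref{mnv}) has $\Gam_m(n/2)$, and these do not coincide unless $m=n$. The discrepancy is a misprint in the statement of Lemma \ref{exl}, not in the corollary: in the last line of its proof one has $I=A_1A_2/B$ with $A_1A_2=\pi^{nm/2}\,\Gam_\Om(\lv+\kv)/\Gam_m(k/2)$ and $B=2^{-m}\sig_{n,m}\,\Gam_\Om(\lv+\nv)=\pi^{nm/2}\,\Gam_\Om(\lv+\nv)/\Gam_m(n/2)$ by (\ref{2.16}), so the correct numerator is $\Gam_m(n/2)$; the sanity check $\lam=0$, where the normalized integral must equal $1$, confirms this, as does the constant $c_\a$ in Example \ref{iikd}. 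Your proof should either record this correction explicitly or rederive the constant, rather than pass from $\Gam_m(m/2)$ to $\Gam_m(n/2)$ silently. (You are right, and it is worth saying, that the measures in (\ref{mnv}) must be read as the normalized ones $d_*v$, $d_*u$ despite the notation $dv$, $du$; otherwise the two integrals could not be equal, since $\sig_{n,m}\neq\sig_{n,k}$ in general.)
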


\bibliographystyle{amsalpha}

\end{document}